\def\cambA{\marginpar{\textcolor{magenta}{changeAnna}}\textcolor{magenta}}
\newtheorem{defi}{Definition}[section]
\newtheorem{cor}{Corollary}[section]
\newtheorem{lemma}[cor]{Lemma}
\newtheorem{teo}[cor]{Theorem}
\newtheorem{rem}[cor]{Remark}
\newtheorem{compa}[cor]{Compatibility conditions}
\newcommand{\R}{\mathbb{R}}
\newcommand{\N}{\mathbb{N}}
\DeclareMathOperator{\dimension}{dim}
\DeclareMathOperator{\Span}{span}
\newcommand{\op}{\operatorname}
\newcommand{\col}{\coloneqq}
\newcommand{\pa}{\partial}
\numberwithin{equation}{section}
\begin{document}

\title{Elastic flow of networks: short-time existence result}

\author{{\sc Anna Dall'Acqua} \thanks{Institut f\"ur Analysis, Universit\"at Ulm, Germany,
\url{anna.dallacqua@uni-ulm.de}}
{\sc Chun-Chi Lin} \thanks{Department of Mathematics, National Taiwan Normal University, Taipei, 116 Taiwan, \url{chunlin@math.ntnu.edu.tw}} and
 {\sc Paola Pozzi}  \thanks{Fakult\"at f\"ur Mathematik, Universit\"at Duisburg-Essen, Germany, \url{paola.pozzi@uni-due.de}} 
}

\date{\today}
\maketitle

\begin{abstract}
In this paper we study the $L^2$-gradient flow of the penalized elastic energy on networks of $q$-curves in $\R^{n}$ for 
$q \geq 3$. Each curve is fixed at one end-point and at the other is joint to the other  curves at a movable $q$-junction. 
For this geometric evolution problem with natural boundary condition  
we show the existence of smooth solutions for a (possibly) short interval of time. 
Since the geometric problem is not well-posed, due to the freedom in reparametrization of curves, 
we consider a fourth-order non-degenerate parabolic quasilinear system, called the analytic problem,  and show first a  short-time existence result for this parabolic system.  
The proof relies on applying Solonnikov's theory on linear parabolic systems 
and Banach fixed point theorem in proper H\"{o}lder spaces. 
Then the original geometric problem is solved by establishing the relation between the analytical solutions and the solutions to the geometrical problem. 
\end{abstract}

\noindent \textbf{Keywords:} geometric evolution, elastic networks, junctions, short-time existence.  
\\
\noindent \textbf{MSC(2010):} primary 35K52; secondary  53C44, 35K61, 35K41 

\bigskip


\tableofcontents

\section{Introduction}

The elastic energy of a smooth regular curve immersed in $\mathbb{R}^{n}$, 
$f: \bar{I} \to \mathbb{R}^{n}$, $n \geq 2 $, $I=(0,1)$, 
is given by 
\begin{equation}\label{Elasten}
\mathcal{E}(f)=\frac{1}{2} \int_I |\vec{\kappa}|^2 ds,
\end{equation}
where $ds=|\partial_x f| dx$ is the arc-length element and $\vec{\kappa}$ is the curvature vector of the curve. 
The latter is given by $\vec{\kappa}=\partial_s^2 f$ where $\partial_s = |\partial_x f|^{-1} \partial_x $ denotes the differentiation with respect to the arclength parameter. 
The elastic energy in (\ref{Elasten}) is also called bending energy of curves.  
It was proposed by Jacob Bernoulli in 1691 for studying the equilibrium shape of curves,
called elasticae or elastic curves, \cite{Truesdell}. 
Besides being used as a simple model in mechanics, 
the elastic energy has also been used for defining and studying the so-called nonlinear splines in computer graphics, 
see e.g., \cite{LF} and the references therein. 

Since the elastic energy of a curve can be made arbitrarily small by enlarging the curve, in minimization problems one usually penalizes the length or consider curves with fixed length. In the first case one is led to consider the energy 
\begin{equation}\label{Elambda}
\mathcal{E}_{\lambda}(f)=\mathcal{E}(f) + \lambda \mathcal{L}(f) \, , \quad \lambda > 0 .
\end{equation}
where 
\begin{equation*}
\mathcal{L}(f)=\int_I  ds 
\end{equation*}
is the length of the curve. The term $\lambda \mathcal{L}(f)$, when $\lambda>0$, in \eqref{Elambda} is a natural term to be considered, 
since it could be viewed as the energy naively responsible for the stretching of curves in elasticity.

In both cases, critical points of the energy satisfy the equation
$$\nabla_{L^2} \mathcal{E}_{\lambda}(f) = \nabla_s^2 \vec{\kappa}+\frac12 |\vec{\kappa}|^2 \vec{\kappa} - \lambda \vec{\kappa} =0 \, ,$$
where in the case of fixed length $\lambda$ is a Lagrange multiplier (see for istance \cite{DKS}, \cite{Polden}). 
Here $\nabla_s$ is an operator that on a smooth vector field $\phi$ acts as follows $\nabla_s \phi = \partial_s \phi - \langle \partial_s \phi, \partial_s f \rangle \partial_s f$, i.e., it is the normal projection of $\partial_s \phi$. It may also be understood as a covariant differentiation.

The attempt to associate the elastic energy to networks appears in some investigation of polymer gels, fiber or protein networks in mechanical engineering or material sciences (e.g., see \cite{BC07}, \cite{GD14}). The mathematical treatment of networks with elastic energy has  started quite recently. In \cite{DNP,NPP} the authors provide first results concerning the existence of minimizers in  special classes of networks (in particular an angle condition is imposed at the junction). Here we look at the steepest descent flow of the elastic energy on networks of $q$ curves, $q \geq 3$, starting from a point (the $q$-junction) and ending at $q$ 
fixed points in $\R^n$. 
In this setting we assign orientation to each curve, although the energy is independent of the orientation of the curves. 
In other words, the network $f=\{f_1,f_2, \dots,f_q\}$, 
where $f_i:\bar{I} \to \R^n$, $I=(0,1)$, $i\in \{1, \dots,q\}$, are $q$ regular curves, 
satisfies the followings: 
\begin{enumerate}
\item The end-points are fixed: 
\begin{equation}\label{IPs}
f_i(1)=P_i \mbox{ for } i\in \{1, \dots,q\},
\end{equation}
with given  points $P_i$, $i\in \{1, \dots,q\}$, in $\R^n$. 
\item The curves start at the same point
\begin{equation}\label{JPs}
f_i(0)=f_j(0) \mbox{ for all } i,j\in \{1, \dots,q\} \,  \quad (\text{concurrecy condition}). 
\end{equation}
\end{enumerate}
We write $\Gamma=\{f_1,f_2, \dots,f_q\}$ when we think of the network as a geometrical object, that is when the parametrization chosen for each curve plays no role. The energy of the network $\Gamma=\{f_1,\dots, f_q\}$ is given by
\begin{equation}\label{energyNetwork}
\mathcal{E}_{\lambda}(\Gamma) = \sum_{i=1}^q \mathcal{E}_{\lambda_{i}}(f_i) \, ,
\end{equation}
where $\lambda=(\lambda_1,\dots,\lambda_q)$, $\lambda_i \geq 0$ (the penalization of the length is obviously not necessary for a short time existence result).

We call the above configuration a $q$-network. The aim of this work is to complete our  work undertaken in \cite{DLPnetwork1}, where we analyse the long time behaviour  for the elastic flow of triods (3-networks). More precisely,  we give here full details on the short-time existence result exploited in \cite{DLPnetwork1}. At the same time we generalize the needed short-time existence statement to the case of $q$-networks for $q \geq 3$. 
A short-time existence result for the elastic evolution of networks appeared first in \cite{GMP}: there the planar case for triods is discussed and the existence is demonstrated in $C^{\frac{4+\alpha}{4}, 4+\alpha}$ spaces. For our arguments in \cite{DLPnetwork1} to be complete we need however a statement for networks  with curves in $\R^{n}$ whose parametrization is smooth is space and time up to time zero. As it turns out, we are able to demonstrate what is needed, independently of the number ($q \geq 3$) of curves meeting at the junction.

For an overview on the current research undertaken on the elastic flow of networks, we refer the interested reader to \cite{DLPnetwork1, NP19, GMPLTE, BGN12b} and the references given there.

\subsection{Main results}

The aim of this work is to establish a short time existence result for the $L^2$-gradient flow of the energy $\mathcal{E}_{\lambda}$ of a network as described above. In other words, 
given an initial $q$-network $\Gamma_0=\{f_{0,1},\dots, f_{0,q}\}$ of sufficiently smooth regular   curves satisfying \eqref{IPs} and \eqref{JPs}, 
we look for the existence of $T>0$ and  $f_i:[0,T]\times [0,1] \to \R^n$, $f_{i} \in C^{\frac{k+\alpha}{4}, k+\alpha}([0,T]\times [0,1]) $,  $k \in \mathbb{N}$, $k \geq 4$, $\alpha \in (0,1)$ (resp. $f_{i} \in C^{\infty} ([0,T]\times [0,1])$, see Appendix \ref{sec:fs} for the definition of the parabolic H\"older spaces) for $i\in \{1, \dots,q\}$, regular curves and solution to
\begin{equation*}
(\partial_t f_i)^{\perp}= - \nabla_s^2 \vec{\kappa_i}-\frac12 |\vec{\kappa}_i|^2 \vec{\kappa}_i + \lambda_{i} \vec{\kappa}_i, \quad i\in \{1, \dots,q\},
\end{equation*}
with initial datum $f_i(t=0)=f_{0,i}$
 and boundary conditions
\begin{equation}\label{eq:nonlinearbcgeo}
\left\{\begin{aligned}
f_i(t,1)& =P_i, &\mbox{ for all }t\in [0,T], i\in \{1, \dots,q\}, \\
\vec{\kappa}_i(t,1) & =0=\vec{\kappa}_i(t,0)  &\mbox{ for all }t\in [0,T], \ i\in \{1, \dots,q\},\\
f_i(t,0) & =f_j(t,0) &\mbox{ for all }t\in [0,T],  \ i,j\in \{1, \dots,q\},\\
\mbox{and }& \sum_{i=1}^q (\nabla_s \vec{\kappa}_{i}(t,0) - \lambda_i \partial_s f_{i}(t,0)) = 0  &\mbox{ for all }t\in [0,T].
\end{aligned} \right.
\end{equation}
As usual $(\pa_t f)^{\perp}$ denotes the normal part of the velocity, i.e. $(\pa_t f)^{\perp} = \pa_t f -\langle \pa_t f, \pa_s f \rangle \pa_s f$. The first and third line in \eqref{eq:nonlinearbcgeo} ensure that during the flow the network satisfies \eqref{IPs} and \eqref{JPs}, while the other boundary conditions are the so called natural ones, derived by imposing that the first variation of the energy is zero. For the derivation of the first variation and the natural boundary conditions in the case $q=3$, the readers are referred to Section 2 of \cite{DLPnetwork1} (see also Appendix~\ref{secA} below). The case of general $q$ goes similarly.

For the initial datum 
$\Gamma_0=\{f_{0,1}, \dots, f_{0,q}\}$, we assume that $f_{0,i} \in C^{k, \alpha}([0,1], \R^{n})$, $k \geq4$, $\alpha \in (0,1)$ (resp. $f_{0,i} \in C^{\infty}([0,1], \R^{n})$),   $ i\in \{1, \dots,q\}$, are regular curves such that at the boundary points
\begin{equation}\label{eq:nonlinearbcgeo2}
\left\{\begin{aligned}
 f_{0,i}(1) &=P_i, & \mbox{ for all } i\in \{1, \dots,q\}, \\
\vec{\kappa}_{0,i}(1) & =0=\vec{\kappa}_{0,i}(0),  &\mbox{ for all }  i\in \{1, \dots,q\},\\
f_{0,i}(0) & =f_{0,j}(0)  & \mbox{ for all }  i,j\in \{1, \dots,q\},\\
\mbox{and }& \sum_{i=1}^q (\nabla_s \vec{\kappa}_{0,i}(0) - \lambda_i \partial_s f_{0,i}(0)) = 0 \, ,
\end{aligned} \right.
\end{equation}
(where $\vec{\kappa}_{0,i}$ denotes the curvature of $f_{0,i}$) and with further compatibility conditions (specified in the 
statements below).
 Furthermore, the initial datum has to satisfy the following non-collinearity condition.

\begin{defi}[Non-collinearity condition \textbf{(NC)}] We say that the initial datum satisfies the non-collinearity condition if
\begin{equation*}
\left. \dimension \Span\{\partial_s f_{0,1},\dots,\partial_s f_{0,q}\}\right|_{x=0}\geq 2 \, .
\end{equation*} 
Similarly, a family of regular curves $f_i:[0,T]\times [0,1] \to \R^n$, $i\in \{1, \dots,q\},$ , 
satisfies the non-collinearity condition if 
\begin{equation*}
\left. \dimension \Span\{\partial_s f_{1}(t,x),\dots,\partial_s f_{q}(t,x)\}\right|_{x=0}\geq 2 \mbox{ for all }t\in [0,T]\, .
\end{equation*} 
\end{defi}
\begin{rem}\label{rem:1.1}
The non-collinearity condition (NC)  
establishes that the $q$ unit tangent vectors at the $q$-junction should not span a one-dimensional subspace. Analytically and equivalently, we can express this fact by considering the (geometric) expression $nc:[0,T]\times [0,1] \to \R$,
$$nc(t,x)=1 -   \prod_{1\leq i<j\leq q} \Big|\langle \partial_{s} f_{i}(t,x), \partial_{s} f_{j}(t,x) \rangle\Big| \, ,$$
and asking that $nc$ is strictly positive at the junction point $x = 0 $.

As we will see below, the non-collinearity condition is necessary in our analysis in order to guarantee the short time existence of a solution. Moreover it has been used also in \cite{DLPnetwork1} to prove long-time existence (in the case $q=3$). Note that in case $q=3$, then  $nc$ is simply given by
$$nc= 1- \langle \partial_{s} f_{1}(t,x), \partial_{s} f_{2}(t,x) \rangle \langle \partial_{s} f_{1}(t,x), \partial_{s} f_{3}(t,x) \rangle \langle \partial_{s} f_{2}(t,x), \partial_{s} f_{3}(t,x) \rangle.$$ 
Indeed, in \cite[\S~5]{DLPnetwork1} it is shown that the non-collinearity condition arises naturally when imposing
$\partial_{t} f_{i}=\partial_{t}f_{j}$ at the junction: in particular if $nc>0$ holds then \emph{at the boundary} the tangential components 
 of the velocity vectors (that is $\langle \partial_{t} f_{i}, \partial_{s} f_{i} \rangle$) 
can be expressed in purely geometric terms. See Remark \ref{rem:4.1}  below for the arguments and the generalization to the case of $q$ curves. 
\end{rem}
Observe that the formulation of the problem  given so far involves purely geometric quantities and hence it is invariant under reparametrizations.

In order to treat the problem analytically and 
to keep the topology of the network with movable junction point during the evolution, 
we need to allow some tangential components in the flow equations. Hence, we rewrite the flow equations as
\begin{equation}\label{eq:flowgeomtang}
\partial_t f_i  = - \nabla_{s}^2 \vec{\kappa}_i -\frac12 |\vec{\kappa}_i|^2 \vec{\kappa}_i  + \lambda_{i} \vec{\kappa}_i + \varphi_i \partial_s f_i \mbox{ on } (0,T) \times I \mbox{ for }  i\in \{1, \dots,q\},
\end{equation}
for some (sufficiently smooth, that is $\varphi_{i} \in C^{{\frac{k+\alpha-4}{4}},k+\alpha-4} ([0,T]\times [0,1]) $ resp. $\varphi_{i} \in C^{\infty} ([0,T]\times [0,1])$) tangential components $\varphi_{i}=\langle \partial_{t} f_{i}, \partial_{s} f_{i} \rangle$, which are part of the problem.\\
Our main result reads as follows.

\begin{teo}[Geometric existence Theorem]\label{teo:STEgeo}
Let $n \geq 2$, $q\geq 3$, 
$\alpha\in(0,1)$ and $P_i$, $i\in \{1, \dots,q\}$, be given points in $\R^n$.  
Given $f_{0,i}:[0,1] \to \R^n$, $f_{0,i} \in C^{4,\alpha}([0,1])$, $i\in \{1, \dots,q\}$, regular curves, satisfying the non-collinearity condition (NC), \eqref{eq:nonlinearbcgeo2}, and 
\begin{align}\label{eq:ccfirststep-zero}
\nabla_s^2 \vec{\kappa}_{0,i}&=0 \qquad \mbox{ at }x=1, \quad  i\in \{1, \dots,q\},  \\ \label{eq:ccfirststep}
-\nabla_s^2 \vec{\kappa}_{0,i} + \varphi_{0,i} \partial_{s}f_{0,i}&=-\nabla_s^2 \vec{\kappa}_{0,j}  +\varphi_{0,j} \partial_{s}f_{0,j}
\quad \mbox{ at }x=0 \quad \mbox{ for }i, j \in \{1, \dots,q\} , 
\end{align}
with $\varphi_{0,i}$ defined in \eqref{eq:defvarphi0} below, then there exist $T>0$ and 
regular curves $f_{i} \in C^{\frac{4+\alpha}{4},4+\alpha}([0,T]\times I;\R^n)$, $i\in \{1, \dots,q\}$, 
 such that 
\begin{equation*}
(\partial_t f_i)^{\perp}= - \nabla_s^2 \vec{\kappa_i}-\frac12 |\vec{\kappa}_i|^2 \vec{\kappa}_i + \lambda_{i} \vec{\kappa}_i, \quad i\in \{1, \dots,q\},
\end{equation*}
together with the boundary conditions \eqref{eq:nonlinearbcgeo} 
and the initial condition $\Gamma=\{f_1, \dots,  f_q\}|_{t=0}$ equal to $\Gamma_{0}=\{f_{0,1}, \dots, f_{0,q}\}$  
that is
\begin{equation}\label{eq:icgeo}
f_i (t=0)=f_{0,i}\circ \phi_{i} , \quad i\in \{1, \dots,q\},
\end{equation}
with $\phi_{i}\in C^{4, \alpha}([0,1], [0,1])$, orientation preserving diffeomorphisms. Moreover, we have instant parabolic smoothing, that is  $f_i \in C^{\infty}((0,T]\times [0,1])$ for any $i\in \{1, \dots,q\}$, and the non-collinearity condition holds at the triple junction for any time $t \in [0,T]$.
\end{teo}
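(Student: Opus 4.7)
The natural strategy, suggested already by the introduction, is to detour through an \emph{analytic problem}: a non-degenerate quasilinear fourth-order parabolic system for the full parametrizations $f_i$ (not only their normal components), and then to recover a geometric solution via reparametrization. I would first rewrite the evolution law as in \eqref{eq:flowgeomtang},
\begin{equation*}
\partial_t f_i = -\nabla_s^2 \vec{\kappa}_i -\tfrac12 |\vec{\kappa}_i|^2 \vec{\kappa}_i +\lambda_i \vec{\kappa}_i + \varphi_i \partial_s f_i,
\end{equation*}
choosing the tangential coefficient $\varphi_i$ so as to turn the degenerate principal symbol $-|\partial_x f_i|^{-4} \partial_x^4$ (which acts only on normal directions) into a uniformly elliptic one in the sense of Solonnikov. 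Inside $(0,1)$ any smooth extension of $\varphi_i$ works; at the junction $x=0$, however, the concurrency constraint $f_i(t,0)=f_j(t,0)$ forces $\partial_t f_i(t,0)=\partial_t f_j(t,0)$, which is a linear $q$-by-$q$ system in the boundary values $\varphi_i(t,0)$. As explained in Remark \ref{rem:1.1}, this linear system is solvable precisely when the non-collinearity condition (NC) holds, and its solution provides a purely geometric expression of $\varphi_i|_{x=0}$; the quantity $\varphi_{0,i}$ appearing in \eqref{eq:ccfirststep} is exactly that boundary expression evaluated on the initial datum.

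Next I would attack the analytic problem by linearization and Banach fixed point in the parabolic Hölder scale $C^{\frac{4+\alpha}{4},4+\alpha}([0,T]\times I)$. Freezing the quasilinear coefficients at the initial datum $f_{0,i}$ yields a linear fourth-order parabolic system on each $I$ coupled at $x=0$ by the natural junction conditions in \eqref{eq:nonlinearbcgeo}. The key technical verification is that the associated boundary operators satisfy the Lopatinskii--Shapiro complementing condition required by Solonnikov's theory: this check decomposes naturally into the Dirichlet-type data at $x=1$ (straightforward) and the system at $x=0$, where the solvability is again governed by (NC). Solonnikov's theorem then yields existence, uniqueness and Schauder estimates for the linearized system in $C^{\frac{4+\alpha}{4},4+\alpha}$, provided the zero- and first-order parabolic compatibility conditions at $(t,x)=(0,0)$ and $(0,1)$ are met; these translate into \eqref{eq:nonlinearbcgeo2}, \eqref{eq:ccfirststep-zero} and \eqref{eq:ccfirststep} on the initial datum. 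A contraction argument in a small ball around the linear solution, exploiting that nonlinear remainders gain a factor $T^{\gamma}$ in the relevant Hölder norms, then produces a unique fixed point $(f_1,\ldots,f_q)$ solving the analytic problem on a possibly short interval $[0,T]$.

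Finally, I would pass from the analytic solution to the geometric one. Since the two problems differ only by the tangential motion $\varphi_i \partial_s f_i$, I construct orientation-preserving diffeomorphisms $\phi_i(t,\cdot)\colon [0,1]\to[0,1]$ by solving the family of ODEs whose flow exactly cancels this tangential component; regularity of $\varphi_i$ transfers to $\phi_i \in C^{4,\alpha}$, the boundary behaviour guarantees that $\phi_i$ fixes $\{0,1\}$, and the reparametrized curves $f_i(t,\phi_i^{-1}(t,\cdot))$ obey the purely normal geometric flow with initial value $f_{0,i}\circ\phi_i(0,\cdot)$, as required in \eqref{eq:icgeo}. Instant parabolic smoothing follows from a standard bootstrap (iterating Solonnikov's theorem on $[\epsilon,T]$ with increasing Hölder index), and the non-collinearity condition persists on a short time interval because $t\mapsto \partial_s f_i(t,0)$ is continuous and (NC) is an open condition.

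The step I expect to be the main obstacle is the verification of the Lopatinskii--Shapiro complementing condition for the junction boundary operators at $x=0$: this is where (NC) enters essentially, where the full vector structure in $\R^n$ must be handled, and where the argument must be made uniform in $q\geq 3$ so that the short-time existence statement does not depend on the number of curves meeting at the triple point.
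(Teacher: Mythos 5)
Your overall architecture (pass to a non-degenerate analytic problem via a tangential component, verify the Lopatinskii--Shapiro condition at the junction using (NC), run Solonnikov plus a Banach fixed point in $C^{\frac{4+\alpha}{4},4+\alpha}$, bootstrap for smoothing) is exactly the machinery the paper builds in Sections 2--3, and that part of your plan is sound. The gap is in the step that actually constitutes the proof of Theorem~\ref{teo:STEgeo}, namely the passage between the analytic and the geometric problem, where you have the reparametrization on the wrong side. The paper reparametrizes the \emph{initial datum} to constant speed \emph{before} solving the analytic problem: this is not cosmetic, because for a general parametrization the geometric conditions $\vec{\kappa}_{0,i}=0$ and \eqref{eq:ccfirststep-zero}, \eqref{eq:ccfirststep} do not coincide with the analytic boundary condition $\partial_x^2 f_{0,i}=0$ and the analytic compatibility conditions \eqref{eq:compaone}, \eqref{eq:compatwo}; only after passing to constant speed (Remark~\ref{rem:Acs}) does $\nabla_s^2\vec{\kappa}_{0,i}$ reduce to $|\partial_x f_{0,i}|^{-4}\partial_x^4 f_{0,i}$ at the boundary points where $\partial_x^2 f_{0,i}=0$. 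This is precisely why the initial condition in \eqref{eq:icgeo} is only attained up to diffeomorphisms $\phi_i$. Your plan, which feeds $f_{0,i}$ directly into the linearized scheme and claims the Solonnikov compatibility conditions ``translate into'' \eqref{eq:ccfirststep-zero} and \eqref{eq:ccfirststep}, would not close without this preliminary normalization.

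The second, more serious, issue is your final step: you propose to solve an ODE for diffeomorphisms $\phi_i(t,\cdot)$ of $[0,1]$ that ``exactly cancels'' the tangential component so that the reparametrized curves obey a purely normal flow. This cannot work at the junction: any reparametrization with $\phi_i(t,0)=0$ for all $t$ leaves $\partial_t f_i(t,0)$ unchanged, while the concurrency condition forces the junction velocity to have a nonzero tangential part $\varphi_i(t,0)$ in general (Remark~\ref{rem:4.1}). Fortunately no such step is needed: since the geometric problem only prescribes $(\partial_t f_i)^{\perp}$, the analytic solution of \eqref{(P)} (which is \eqref{eq:flowgeomtang} with the specific tangential component $\varphi_i^*$ of \eqref{varphi*}) is \emph{already} a solution of the geometric problem, and its stronger boundary condition $\partial_x^2 f_i=0$ implies $\vec{\kappa}_i=0$. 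The ODE-for-diffeomorphisms argument you describe is the content of the geometric \emph{uniqueness} statement (Lemma~\ref{geomuniq}), where one compares two given solutions; importing it into the existence proof both overcomplicates the argument and introduces the false claim that the tangential motion can be removed globally.
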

It turns out that also \eqref{eq:ccfirststep} is a fully geometric condition, as discussed in detail in Remark~\ref{rem:4.1} (cf. also Remark~\ref{rem:1.1} above). Since the problem and formulation are fully geometric, it is natural and consistent that in \eqref{eq:icgeo}  we should not fix the parametrization of the initial data.

Upon imposing higher regularity and stronger compatibility condition for the initial data, we can obtain a smooth solution. Precisely

\begin{teo}[Smooth Geometric existence Theorem]\label{teo:STEgeoCinfty}
Let $n \geq 2$, $q\geq 3$, 
and $P_i$, $i\in \{1, \dots,q\}$, be given points in $\R^n$.  
Given $f_{0,i}:[0,1] \to \R^n$, $f_{0,i} \in C^{\infty}([0,1])$, $i\in \{1, \dots,q\}$, regular curves 
which, when parametrized by constant speed, satisfy the compatibility conditions of any order (as stated in Remark~\ref{rem2.1} below) and the non-collinearity condition (NC), then there exist $T>0$ and 
regular curves $f_{i} \in C^{\infty}([0,T]\times [0,1];\R^n)$, $i\in \{1, \dots,q\}$, 
 such that 
 \begin{equation*}
(\partial_t f_i)^{\perp}= - \nabla_s^2 \vec{\kappa_i}-\frac12 |\vec{\kappa}_i|^2 \vec{\kappa}_i + \lambda_{i} \vec{\kappa}_i, \quad i\in \{1, \dots,q\},
\end{equation*}
together with the boundary conditions \eqref{eq:nonlinearbcgeo} and the initial condition 
$\Gamma=\{f_1, \dots , f_q\}|_{t=0}$ equal to $\Gamma_{0}=\{f_{0,1}, \dots, f_{0,q}\}$ (in the sense of \eqref{eq:icgeo} for smooth orientation preserving diffeomorphisms).
Moreover, the non-collinearity condition (NC) holds at the $q$-junction for any time $t \in [0,T]$.
\end{teo}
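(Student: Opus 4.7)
The plan is to derive the smooth statement from the $C^{4,\alpha}$ existence result of Theorem~\ref{teo:STEgeo} by running the same Solonnikov/fixed-point scheme at every higher H\"older level and then intersecting the resulting regularity classes. In other words, I view Theorem~\ref{teo:STEgeo} as the base case of a hierarchy of short-time existence results indexed by $k\geq 4$, each of which is available because the smoothness of the initial datum together with the infinite tower of compatibility conditions (Remark~\ref{rem2.1}) supplies precisely what Solonnikov's theory asks for at every order.

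First, I would fix the constant-speed parametrization of each $f_{0,i}$, so that the initial datum is a distinguished smooth representative of the geometric object $\Gamma_0$. Since \eqref{eq:nonlinearbcgeo2}, \eqref{eq:ccfirststep-zero}, \eqref{eq:ccfirststep} and the non-collinearity condition are all satisfied (being implied by the compatibility conditions of order zero and one and by (NC)), Theorem~\ref{teo:STEgeo} produces regular curves $f_i\in C^{\frac{4+\alpha}{4},4+\alpha}([0,T]\times[0,1];\R^n)$ solving the geometric flow with boundary conditions \eqref{eq:nonlinearbcgeo}, initial condition of the form \eqref{eq:icgeo}, and satisfying (NC) on $[0,T]$. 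Instant parabolic smoothing already gives $f_i\in C^{\infty}((0,T]\times[0,1])$; what remains is regularity up to $t=0$.

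Second, I would re-examine the proof of Theorem~\ref{teo:STEgeo} with an arbitrary regularity index $k\geq 4$. The associated analytic problem, i.e.\ the non-degenerate quasilinear parabolic system obtained by adding the tangential components $\varphi_i$ as in \eqref{eq:flowgeomtang}, fits Solonnikov's framework in $C^{\frac{k+\alpha}{4},k+\alpha}$ as soon as the initial datum is in $C^{k,\alpha}$ and the compatibility conditions through order $\lfloor(k-2)/4\rfloor$ are met; these are exactly the hypotheses assumed here for every $k$. Hence the very same Banach fixed-point construction yields an analytic solution of class $C^{\frac{k+\alpha}{4},k+\alpha}$ on some interval $[0,T_k]$, and the reparametrization step relating analytic and geometric solutions (as in the proof of Theorem~\ref{teo:STEgeo}) provides a geometric solution of the same regularity. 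Uniqueness in the $C^{4,\alpha}$ contraction argument, together with the standard continuation principle, shows that all these solutions agree with the solution of the first step on a common interval $[0,T]$ (possibly after shrinking $T$ once and for all, or by iterating the $C^{4,\alpha}$ result starting from the already-smoothed solution at a small positive time, which makes the lifespan in higher norms controllable). Taking the intersection over $k$ gives $f_i\in C^{\infty}([0,T]\times[0,1])$, and (NC) at the $q$-junction persists for short times by continuity, as already noted for Theorem~\ref{teo:STEgeo}.

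The main obstacle I expect is the bookkeeping for the tower of compatibility conditions. Solonnikov's theorem at order $k$ requires that when one formally differentiates the evolution equation in time, substitutes the equation recursively to eliminate $\partial_t^j f_i$ in favour of spatial derivatives, and evaluates at $t=0$ on $x=0$ and $x=1$, the resulting algebraic identities are satisfied by $f_{0,i}$; matching these identities across the $q$ curves at the movable junction (including the concurrency $f_i(t,0)=f_j(t,0)$, its time derivatives, the vanishing-curvature condition at both ends, and the balance of the fourth-order terms) is exactly what the phrase \emph{compatibility conditions of any order} in Remark~\ref{rem2.1} encodes. Once this identification is spelled out, the bootstrap is mechanical.
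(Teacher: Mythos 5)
Your overall strategy --- pass to the constant-speed parametrization, solve the analytic problem, and upgrade regularity order by order --- matches the paper's skeleton, but the mechanism you use for the upgrade is different from the paper's and contains a gap. You propose to re-run the entire Solonnikov/Banach fixed-point construction in $C^{\frac{k+\alpha}{4},k+\alpha}$ for every $k\geq 4$, obtaining a solution on an interval $[0,T_k]$ for each $k$, and then to pass to ``a common interval $[0,T]$ (possibly after shrinking $T$ once and for all)''. Nothing in the fixed-point argument gives a lower bound on $T_k$ that is uniform in $k$: the contraction constant $C_1$ and the self-map constant $C_2$ (cf.\ \eqref{eq:Banach}, \eqref{eq:estSol3}, \eqref{defM}) depend on $\|f_{0}\|_{C^{k,\alpha}}$ and on the radius $M_k$ of the ball $X_i$ at level $k$, both of which may grow without control as $k\to\infty$, so $\inf_k T_k$ could be zero and ``shrinking $T$ once and for all'' is not justified. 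Your alternative fix --- restarting the $C^{4,\alpha}$ theory from a positive time where the solution is already smooth --- only reproves interior smoothing and says nothing about regularity at $t=0$, which is the entire content of the statement beyond Theorem~\ref{teo:STEgeo}. (The $T_k\to 0$ issue could in fact be repaired by combining ``each $T_k>0$'' with the instant smoothing on $(0,T]$, since for fixed derivative order one only needs one $k$ and one $T_k>0$ near $t=0$; but you do not make this observation, and it still leaves the second problem below.)

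The paper avoids all of this by never re-running the fixed point. In Theorem~\ref{teo:STEpdesmooth} one takes the $C^{\frac{4+\alpha}{4},4+\alpha}$ solution $f$ of \eqref{(P)}, \eqref{eq:nonlinearbc} produced once by Theorem~\ref{teo:STEpde}, and then regards the \emph{nonlinear} problem it solves as a \emph{linear} parabolic system for $f$ with known coefficients $a_i=|\partial_x f_i|^{-4}$, $b_i=h(f_i)$, $c_i$, $q_i$, whose regularity is inherited from $f$ itself. Solonnikov's linear theorem is then applied on the \emph{fixed} interval $[0,T]$ --- linear theory requires no smallness of $T$ --- and yields $f\in C^{\frac{5+\alpha}{4},5+\alpha}([0,T]\times[0,1])$; iterating gains one order at a time on the same interval, which is exactly why Corollary~\ref{teo:STEpdeCinfty} can assert that the existence time is independent of $k$. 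Besides being lighter, this sidesteps a second difficulty your route would face: at level $k$ the gain of the factor $T^{\alpha/4}$ in the contraction and self-map estimates relies on the elements of $X_i$ agreeing with the initial datum at $t=0$; to reproduce this in $C^{\frac{k+\alpha}{4},k+\alpha}$ one must build into $X_i$ the agreement of the time derivatives up to order $\lfloor k/4\rfloor$ with the values prescribed by the equation at $t=0$, and redo Lemmas~\ref{lem:delta}--\ref{lem:propHolder} and the estimates \eqref{eq:R(f)-R(g)}--\eqref{estbcont} in the higher norms. This is not ``mechanical'' bookkeeping, and your proposal does not engage with it. I would therefore replace the tower of fixed-point arguments by the paper's linear bootstrap on the fixed interval, after which the geometric statement follows exactly as you describe via the constant-speed reparametrization and the persistence of (NC) by continuity.
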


\begin{rem} In order to be able to use the expression ``network'' to describe our geometrical setting we restrict ourself to the case of $q$ curves  with $q \geq 3$, but as the analysis below shows, all arguments used are still valid also in the case of $q=2$. Moreover, the analysis performed below can be easily generalized to  $q$-networks with curves meeting in two $q$-junctions (the so called theta-networks when  $q=3$), therefore the previous results hold also for this configuration.
\end{rem}
The problem of geometric uniqueness is briefly treated in Lemma~\ref{geomuniq}.

\begin{rem}\label{rem:4.1}
In the statement of Theorem \ref{teo:STEgeo} we ask that the initial datum satisfies \eqref{eq:ccfirststep}. This condition is geometrical (i.e. independent of the choice of parametrization) since  the non-collinearity condition (NC) holds along the flow. This has been observed and exploited already in \cite[Rem.5.1]{DLPnetwork1} in the case of $q=3$. In the case of general $q$ the argument goes as follows. If $f_i,\varphi_i$, $i=1,2,\ldots,q$, solve \eqref{eq:flowgeomtang}, \eqref{eq:nonlinearbcgeo}, \eqref{eq:nonlinearbcgeo2} in the sense of Theorem \ref{teo:STEgeo} (in particular also the compatibility conditions of order zero are satisfied), then at $x=0$ we have that for any $t\in [0,T)$,  $\partial_{t} f_{i}= \partial_{t} f_{j}$, that is
\begin{align*}
-A_{i} + \varphi_{i} T_{i} = -A_{j} + \varphi_{j} T_{j}
\end{align*}
for any $i,j \in \{ 1, \ldots,q \}$, where for brevity of notation we write
$$A_{i}= A_{i}(t):=\nabla_{s}^2 \vec{\kappa}_i \Big|_{x=0}, \qquad  T_{i}=T_{i}(t)= \partial_{s} f_{i}(t,0),$$
and where we have used the fact that the curvature vanishes at the boundary.
Taking the scalar product with $T_{i}$ gives  $\varphi_{i}= - \langle A_{j}, T_{i} \rangle + \varphi_{j} \langle T_{j}, T_{i} \rangle$, and summing up yields
\begin{align*}
(q-1) \varphi_{i} = \sum_{ j \neq i,\, j=1}^{q} \varphi_{i}=\sum_{ j \neq i,\, j=1}^{q} (- \langle A_{j}, T_{i} \rangle  + \varphi_{j} \langle T_{j}, T_{i} \rangle).
\end{align*}
In other words, for any $i \in \{ 1, \ldots, q \}$ we have
\begin{align*}
(q-1) \varphi_{i} - \sum_{ j \neq i,\, j=1}^{q}\varphi_{j} \langle T_{j}, T_{i} \rangle=    \sum_{ j \neq i,\, j=1}^{q}- \langle A_{j}, T_{i} \rangle,
\end{align*}
which can be written as
\begin{align}\label{phi1in0}
Q\cdot\left(
\begin{array}{c}
\varphi_{1}\\\varphi_{2} \\ \vdots \\ \varphi_{q}
\end{array} \right) = \left(
\begin{array}{c}
- \langle  \sum_{ j \neq 1,\, j=1}^{q} A_{j}, T_{1} \rangle\\
- \langle  \sum_{ j \neq 2,\, j=1}^{q} A_{j}, T_{2} \rangle\\
\vdots \\
- \langle  \sum_{ j \neq q,\, j=1}^{q} A_{j}, T_{q} \rangle\\
\end{array} \right),
\end{align}
where
\begin{align*}
Q=\left(
\begin{array}{ccccc}
(q-1) & - \langle T_{1}, T_{2} \rangle& - \langle T_{1}, T_{3} \rangle & \ldots &  -\langle T_{1}, T_{q} \rangle\\
- \langle T_{2}, T_{1} \rangle & (q-1) &- \langle T_{2}, T_{3} \rangle & \ldots & -\langle T_{2}, T_{q} \rangle\\
\vdots & & & & \vdots\\
- \langle T_{q}, T_{1} \rangle& - \langle T_{q}, T_{2} \rangle &- \langle T_{q}, T_{3} \rangle & \ldots& (q-1)
\end{array} \right)
\text{.} 
\end{align*} 
We see that the submatrix $(Q_{ij})_{i,j=1}^{q-1} \in \R^{(q-1)\times (q-1)}$ 
composed out of the first $(q-1)$ rows and $(q-1)$ columns is strictly diagonal dominant and hence invertible.
This in turns implies that the first $(q-1)$ columns of $Q$ are linearly independent and hence $rank\, (Q) \geq q-1$. The rank of $Q$ is precisely $q-1$ if we can write the last column as a linear combination of the first $(q-1)$, that is if $Q$ has zero as an eigenvalue. 
Hence, suppose there exists $v \in \R^{q}$, $v \neq0$, such that $Qv=0$. Let $w=\frac{v}{\|v\|_{\infty}}$. Then $Qw=0$ and $|w_{i}| \leq 1$ for any $i=1, \ldots q$. Possibly multiplying $w$ with $-1$ we obtain the existence of an entry $w_{j}$ such that $w_{j}=1$. From $(Qw)_{j}=0$ it follows then
$$ (q-1)= \sum_{i\neq j, i=1}^{q} w_{i}\langle T_{i}, T_{j} \rangle$$
Since $|w_{i}\langle T_{i}, T_{j} \rangle| \leq |\langle T_{i}, T_{j} \rangle| \leq 1$ this can not be realized if $dim \{ T_{1}, \ldots, T_{q} \} \geq2.$
Hence it becomes clear that the validity of the non-collinearity condition (NC) at time $t \in [0,T)$ ensures the invertibility of $Q$ and thus also the fact that $\varphi_i(t,0)$, $i=1, \ldots, q$, $t \in [0,T)$ can be expressed in geometrical terms at the junction point, namely
\begin{equation}\label{eq:defvarphi0}
\varphi_{i}(t,x=0) = \varphi_{i}\big( \partial_{s} f_{j}(t,0),\nabla_{s}^2 \vec{\kappa}_{j}(t,0), j=1,2,\ldots, q \big)
\end{equation}
where the exact expression can be immediately deduced from \eqref{phi1in0}.

\end{rem}

\subsection{Structure of the article}

In the next section we give the analytical problem which we are going to solve. 
We consider \eqref{eq:flowgeomtang} with a specific choice of the tangential component $\varphi_{i}$ (cf. \eqref{varphi*} below) and with boundary conditions a bit stronger than \eqref{eq:nonlinearbcgeo}. These choices 
are dictated by the need to obtain a (fourth order) non-degenerate system of quasilinear PDEs. 
In Section~\ref{sec:PT}, we give the proof of the short-time existence for the non-degenerate parabolic system of fourth-order, solving the analytic problem, 
by applying Solonnikov's theory on linear parabolic systems and Banach fixed point theorem in proper 
H\"{o}lder spaces. 
In Section~\ref{sec:equiv} we discuss  the relation between the analytical solutions obtained in Section~\ref{sec:ap} and the solutions to the geometrical problem we are interested in.
In the appendix we collect some useful results.

\bigskip


\noindent \textbf{Acknowledgements:} This project has been funded by the Deutsche Forschungsgemeinschaft (DFG, German Research Foundation)- Projektnummer: 404870139, 
and Ministry of Science and Technology, Taiwan (MoST 107-2923-M-003 -001 -MY3).

\section{The analytical problem}
\label{sec:ap}

As already observed, the geometrical problem we want to study is not well posed due to the freedom given by the invariance with respect to reparametrizations. This is why, we consider now a fourth order (non-degenerate) system of quasilinear PDEs for which we prove existence of a solution.

In the flow equations \eqref{eq:flowgeomtang} 
only the normal components of derivatives with respect to arc-length appear and hence the operator is not uniformly elliptic. From formula \eqref{eq:Aflowexpl} we see that 
by choosing tangential components 
\begin{align}\label{varphi*}
\varphi^{*}_{i}&=
 -  \langle \frac{\partial_{x}^{4}f_{i}}{|\partial_{x}f_{i}|^{5}}, \partial_{x}f_{i} \rangle + 10  \frac{\langle \partial_{x}^{2}f_{i}, \partial_{x}f_{i} \rangle}{|\partial_{x}f_{i}|^{7}} \langle \partial_{x}^{3}f_{i}, \partial_{x}f_{i} \rangle  +\frac{5}{2} \langle \partial_{x}^{2}f_{i}, \partial_{x}f_{i} \rangle \frac{|\partial_{x}^{2}f_{i}|^{2}}{|\partial_{x}f_{i}|^{7}} \\ 
& \quad 
-\frac{35}{2}\frac{(\langle \partial_{x}^{2}f_{i}, \partial _{x}f_{i} \rangle)^{3}}{|\partial_{x} f_{i}|^{9}} 
+ \lambda_{i} \frac{\langle \partial_{x}^{2}f_{i}, \partial_{x} f_{i} \rangle}{|\partial_{x}f_{i}|^{3}}  \notag
\end{align}
we get the parabolic equations
\begin{equation} \label{(P)}
\pa_t f_i=-\frac{1}{|\pa_x f_i|^4}\pa_{x}^4 f_i+h(f_i),
\end{equation}
for $i=1, \dots,q$, with 
\begin{align}\label{eq:gi}
h (f_i) & =  6 \langle \partial_x^2 f_{i}, \partial_x f_{i} \rangle \frac{ \partial_x^3 f_{i}}{|\partial_x f_{i}|^{6}}  \\ \nonumber
&  \quad+
\frac{ \partial_x^2 f_{i}}{|\partial_x f_{i}|^{2}} \Big{(} \frac{5}{2} \frac{|\partial_x^2 f_{i}|^{2}}{|\partial_x f_{i}|^{4}}  
+ 4 \frac{\langle \partial_x^3 f_i, \partial_x f_i \rangle}{|\partial_x f_{i}|^{4}} - \frac{35}{2} \frac{(\langle \pa_x^2 f_{i}, \pa_x f_{i} \rangle)^{2}}{|\pa_x f_{i}|^{6}} +\lambda_{i}\Big{)}\, . 
\end{align}
Further, we observe that the boundary condition $\vec{\kappa}=0$ is not well posed. Indeed, the curvature of a curve $f$ can be written as
$$ \vec{\kappa}= \frac{1}{|\pa_x f|^2} \Big( Id_{n \times n} - \pa_s f \otimes \pa_s f \Big) \partial_x^2 f \, .$$
(Here and in the following, for vectors $v,w\in \R^n$ we write $v \otimes w$ to denote the $n\times n$ matrix $v w^{t}$.)
Clearly, the matrix given by the terms between the brackets has $0$ as an eigenvalue. If one instead imposes the boundary condition $\pa_x^2 f=0$ this in particular implies that the curvature is zero and it also gives a well posed problem. 
 For this reason the problem we construct a solution to is in fact \eqref{(P)} with boundary conditions
\begin{equation}\label{eq:nonlinearbc}
\left\{\begin{aligned}
f_i(t,1)& =P_i, &\mbox{ for all }t\in [0,T), i\in \{1, \dots,q\}, \\
\pa_x^2 f_i(t,1) & =0=\pa_x^2 f_i(t,0)  &\mbox{ for all }t\in [0,T), \ i\in \{1, \dots,q\},\\
f_i(t,0) & =f_j(t,0) &\mbox{ for all }t\in [0,T), \ i, j\in \{1, \dots,q\}, \\
\mbox{and }& \sum_{i=1}^q (\nabla_s \vec{\kappa}_{i}(t,0) - \lambda_i \partial_s f_{i}(t,0)) = 0  &\mbox{ for all }t\in [0,T),
\end{aligned} \right.
\end{equation}
instead of \eqref{eq:nonlinearbcgeo}.

In order to find solutions that are $C^{\frac{4+\alpha}{4},4+\alpha}([0,T]\times [0,1])$, $\alpha \in (0,1)$, for some $T>0$ the initial datum has to satisfy some compatibility conditions. Following the notation of \cite[page 98]{Sol} (see also \cite[page 217]{EZ}) we need to impose compatibility conditions of order zero.

\begin{compa}\label{compaorder1}
 We assume that $f_{0,i}:[0,1] \to \R^n$, $f_{0,i} \in C^{4,\alpha}([0,1])$, $ i\in \{1, \dots,q\}$, regular curves, satisfy compatibility condition of order zero for the problem \eqref{(P)}, \eqref{eq:nonlinearbc}. That is, $f_{0,i}$ satisfy the boundary conditions
\begin{equation}\label{eq:nonlinearbcid}
\left\{\begin{aligned}
f_{0,i}(1)& =P_i,  \quad 
\pa_x^2 f_{0,i}(1)=0=\pa_x^2 f_{0,i}(0)  , \  i\in \{1, \dots,q\},\\
f_{0,i}(0) & =f_{0,j}(0), \  i\in \{1, \dots,q\}, \\
\mbox{and }& \sum_{i=1}^q (\nabla_s \vec{\kappa}_{0,i}(0) - \lambda_i \partial_s f_{0,i}(0)) = 0  ,
\end{aligned} \right.
\end{equation}
and at the fixed boundary point $x=1$ the curves satisfy
\begin{equation}\label{eq:compaone} \frac{1}{|\pa_x f_{0,i}|^4} \pa_x^4 f_{0,i}\Big|_{x=1} 
=0\, ,
\end{equation}
(i.e. $\partial_t f_i=0$ at $t=0$, $x=1$) while at the junction point $x=0$
\begin{equation}\label{eq:compatwo} \frac{1}{|\pa_x f_{0,i}|^4}  \pa_x^4 f_{0,i}\Big|_{x=0} 
=\frac{1}{|\pa_x f_{0,j}|^4}  \pa_x^4 f_{0,j}\Big|_{x=0} 
\, ,
\end{equation}
for $i\ne j \in \{1, \dots,q\}$ (i.e. $\partial_t f_i=\pa_t f_j$ at $t=0$, $x=0$).
\end{compa}
\begin{rem}
In the formulas above we have used that $h(f_i)$ is zero at the boundary due to the boundary condition $\pa_x^{2} f_i=0$.
\end{rem}

\begin{teo}\label{teo:STEpde}
Let $n \geq 2$, $q\geq 3$, 
$\alpha\in(0,1)$ and $P_i$, $i\in \{1, \dots,q\}$, be  points in $\R^n$.  
Given $f_{0,i}:[0,1] \to \R^n$, $f_{0,i} \in C^{4,\alpha}([0,1])$, $i\in \{1, \dots,q\}$, regular curves satisfying the Compatibility conditions \ref{compaorder1} and the non-collinearity condition (NC), then there exist $T>0$ and 
regular curves $f_{i} \in C^{\frac{4+\alpha}{4},4+\alpha}([0,T]\times [0,1];\R^n)$, $i\in \{1, \dots,q\}$, 
 such that $f=(f_1,\dots ,f_q)$ is the unique solution of \eqref{(P)} together with the boundary conditions \eqref{eq:nonlinearbc} and the initial condition
\begin{equation}\label{eq:ic}
f_i (t=0)=f_{0,i} \ .
\end{equation}
Moreover, we have instant parabolic smoothing, that is  $f_i \in C^{\infty}((0,T]\times [0,1])$ for any $i\in \{1, \dots,q\}$ and the non-collinearity condition (NC) holds at the triple junction for any time $t \in [0,T]$.
\end{teo}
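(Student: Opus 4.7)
The plan is to prove Theorem~\ref{teo:STEpde} via a Banach fixed-point argument in the parabolic H\"older space $C^{(4+\alpha)/4,4+\alpha}([0,T]\times[0,1])^q$, with the linear step handled by Solonnikov's theory for fourth-order parabolic systems, and then to obtain the $C^\infty$ regularity on $(0,T]\times[0,1]$ by parabolic bootstrap.

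First I would shift to homogeneous initial data by writing $u_i = f_i - f_{0,i}$, so that the system \eqref{(P)} with \eqref{eq:nonlinearbc}, \eqref{eq:ic} becomes
\begin{equation*}
\partial_t u_i + \frac{1}{|\partial_x f_{0,i}|^4}\partial_x^4 u_i = N_i(u),\qquad u_i|_{t=0}=0,
\end{equation*}
together with the corresponding linearized boundary operators and their (inhomogeneous) traces. Here $N_i(u)$ gathers the terms from $h(f_i)$, the error from freezing the leading coefficient at $f_{0,i}$, and the contributions generated by $f_{0,i}$ itself. Compatibility conditions~\ref{compaorder1} translate into zero-order compatibility of this reduced problem at $t=0$, which is precisely what Solonnikov's theory requires. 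For $v$ in a closed ball $B_R$ of $C^{(4+\alpha)/4,4+\alpha}([0,T]\times[0,1])^q$ vanishing at $t=0$, I would define $\Phi(v)=u$ as the solution of the linear problem obtained by freezing the principal coefficient at $f_{0,i}$ and substituting $v$ into the lower-order nonlinearity and into the nonlinear part of the boundary operator.

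The main obstacle is verifying the Lopatinskii--Shapiro complementing condition at the junction $x=0$. At $x=1$ the boundary operator decouples across the $q$ curves (Dirichlet $u_i=P_i$ and $\partial_x^2 u_i = 0$) and the check is routine. At $x=0$ the $q$ curves are coupled through the $q-1$ concurrency conditions, the $q$ second-order conditions $\partial_x^2 u_i=0$, and the single vector third-order condition $\sum_i(\nabla_s \vec{\kappa}_i - \lambda_i \partial_s f_i)=0$, totalling the correct number of scalar conditions for a $4\times q$ principal symbol on the half-line. Rewriting the concurrency plus the third-order condition in terms of tangential and normal components and freezing coefficients at $f_{0,i}(0)$, the complementing condition reduces to a linear-algebraic statement whose invertibility is exactly the invertibility of the matrix $Q$ appearing in \eqref{phi1in0}; by the analysis in Remark~\ref{rem:4.1}, this is guaranteed by the non-collinearity condition (NC) on the initial datum. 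This is what makes (NC) not merely convenient but necessary for the scheme to start.

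Once the linear theory gives existence and H\"older estimates for $\Phi(v)$, standard composition and product estimates in parabolic H\"older spaces show that, for $R$ chosen appropriately and $T$ sufficiently small, $\Phi:B_R\to B_R$ is a contraction; uniqueness in this class follows from the contraction property. Persistence of (NC) for short time is immediate from the continuity of $\partial_s f_i$ on $[0,T]\times[0,1]$ and the non-collinearity at $t=0$, after possibly shrinking $T$. Finally, for any $\tau\in(0,T)$, the restriction $f_i|_{[\tau,T]\times[0,1]}$ has smooth-in-time interior regularity gained by differentiating the equation with respect to $t$, and re-solving the linear problem in successively higher H\"older classes at the boundary yields $f_i\in C^\infty((0,T]\times[0,1])$ via a standard bootstrap.
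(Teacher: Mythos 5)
Your overall strategy coincides with the paper's: linearize at the initial datum with frozen principal coefficient $|\partial_x f_{0,i}|^{-4}$, solve the linear system by Solonnikov's theory after verifying the complementing conditions (where (NC) enters at the junction), close a contraction in $C^{\frac{4+\alpha}{4},4+\alpha}$, and bootstrap for smoothness. The shift to homogeneous initial data $u_i=f_i-f_{0,i}$ is a cosmetic variant of the paper's device of working in the affine set $X_i=\{u:\ \|u\|\le M,\ u(0,\cdot)=f_{0,i}\}$. However, three points in your sketch need repair. First, the complementing condition at the junction does \emph{not} reduce to the invertibility of the $q\times q$ matrix $Q$ of \eqref{phi1in0}: that matrix governs the solvability for the tangential velocities $\varphi_i$ in Remark~\ref{rem:4.1}, a different (though thematically related) computation. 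The actual reduction, after eliminating the boundary unknowns through the roots $\tau_{i,1},\tau_{i,2}$ of $s_i$, is to the $n\times n$ system $qv-\sum_{i=1}^q\pi_i v=0$ with $\pi_i$ the orthogonal projection onto $\Span\{\partial_x f_{0,i}(0)\}$; its injectivity under (NC) follows from $|\pi_i w|\le|w|$ with equality only for $w\parallel d_i$. Both facts follow from (NC), but identifying them conflates two distinct pieces of linear algebra, and the one you cite is not the one the complementing condition requires.

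Second, the contraction property only yields uniqueness of the fixed point inside the ball $B_R$ (equivalently, among solutions with norm at most $M$), whereas the theorem asserts uniqueness in $C^{\frac{4+\alpha}{4},4+\alpha}$ outright. The paper closes this gap with a separate continuation argument: given two solutions, set $\bar t=\sup\{t:\ f\equiv\tilde f\ \text{on}\ [0,t]\}$, restart the problem at $\bar t$ (noting that (NC) and the order-zero compatibility conditions persist), and contradict maximality. Third, your smoothing step ("differentiating the equation with respect to $t$ and re-solving in higher H\"older classes") glosses over the compatibility conditions that Solonnikov's theorem demands at the initial time of each restricted interval; the paper's device is to multiply by a smooth time cutoff $\eta$ vanishing near $t=0$, so that $\gamma=f\eta$ has zero initial datum and satisfies the compatibility conditions of every order for free, and then to shift the cutoff progressively. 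Without some such mechanism the bootstrap at the boundary does not start.
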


\section{Proof of Theorem \ref{teo:STEpde}}
\label{sec:PT}


We start by fixing some notation. Given $q$ time dependent curves: $f_{i}: [0,T) \times [0,1] \to \R^{n}$, $i\in \{1, \dots,q\}$, we denote their components by
\begin{equation}\label{eq:notbron}
f_i^{j} \mbox{ with }j=1, \dots, n \mbox{ for each curve } f_i , \quad i\in \{1, \dots,q\} .
\end{equation}
In the following, when it does not create confusion, we will not write the dependence in $t$ and $x$ to keep the notation as slender as possible. As we will see the arguments are independent of $q$ the number of curves. When reading the arguments for the first time it might be useful to consider simply the case $q=3$.

Here $f_{0,i}: [0,1] \to \R^{n}$, $i\in \{1, \dots,q\}$, denote the initial data as given in Theorem~\ref{teo:STEpde} and their components are denoted by $f_{0,i}^j$ according to \eqref{eq:notbron}. Let us recall that these are regular curves and satisfy the Compatibility Conditions \ref{compaorder1} as well as the non-collinearity condition (NC). Set $\delta>0$ as
\begin{equation}\label{eq:defdelta}
\delta:=\min\{|\pa_x f_{0,i}(x)|: x\in [0,1] \mbox{ and }i\in \{1, \dots,q\}\} \, .
\end{equation}

Set for $\alpha \in (0,1)$, and  for some $0<T<1$ and $M>0$  both to be 
chosen later
\begin{equation}\label{eq:X_i}
X_i=\Big\{u\in C^{\frac{4+\alpha}{4}, 4+\alpha} \big([0,T]\times [0,1]; \R^n\big): \; \|u\|_{C^{\frac{4+\alpha}{4},4+\alpha}}\leq M, \, u(0,x)=f_{0,i}\Big\} \, ,
\end{equation} 
for $i\in \{1, \dots,q\}$ (recall Appendix~\ref{sec:fs}, where definition of parabolic H\"older spaces and useful properties are collected).

We proceed now as follows. We first associate a linear system to \eqref{(P)}, \eqref{eq:nonlinearbc} with \eqref{eq:ic} for each $\bar{f} \in \prod_{i=1}^q X_i$ by computing the coefficients at the initial datum and choosing the right hand side depending also on $\bar{f}$ in such a way that a fixed point of the associated solution operator solves the original non-linear problem. Thanks to the non-collinearity condition we show that the linear parabolic system is well-posed and hence we have the solution operator 
\begin{align}\label{eq:descrR}
\mathcal{R}: \prod_{i=1}^q X_i&\to \prod_{i=1}^q  C^{\frac{4+\alpha}{4}, 4+\alpha} \big([0,T]\times [0,1]: \R^n\big)\\ \nonumber
\bar{f}&\mapsto f=\mathcal{R}\bar{f} \, ,
\end{align}
with $\bar{f}=(\bar{f}_1,\dots,\bar{f}_q)$.
Then a fixed point of this map is a solution of \eqref{(P)}, \eqref{eq:nonlinearbc} with \eqref{eq:ic}. 

Below we show in detail how this operator $\mathcal{R}$ is constructed, but first of all let us prove with the following lemma 
that, by choosing first $M$  and then $T$, we can guarantee that the maps $\bar{f}_{i} \in X_{i}$ are regular on the whole considered time interval. The choice of $M$ is specified in \eqref{defM} below.

Here we consider $f_{0,i} \in C^{\frac{4+\alpha}{4},4+\alpha}([0,T]\times [0,1])$ by extending it as a constant function in time.

\begin{lemma}\label{lem:delta}
Let $\bar{f}_i \in X_i$ for some $i\in \{1, \dots,q\}$. Then for any  $T <1$ we have
\begin{equation}\label{eq:stellina} 
\| \pa_x f_{0,i} - \pa_x \bar{f}_i\|_{C^{\frac{\alpha}{4},\alpha}([0,T]\times [0,1])} 
\leq CT^{\frac{\alpha}{4}} \Big(\|\bar{f}_i \|_{C^{\frac{4+\alpha}{4},4+\alpha}([0,T]\times [0,1])} + \|f_{0,i} \|_{C^{4,\alpha}([0,1])}\Big),
\end{equation}
for some universal constant $C$. 
Moreover, there exists $0<T_1 <1$, such that
\begin{equation}\label{eq:immersion}
| \pa_x \bar{f}_i (t,x)|\geq \frac12 \delta >0 \, \mbox{ for all }(t,x)\in [0,T_1]\times [0,1]  ,
\end{equation}
with $\delta$ defined as in \eqref{eq:defdelta}. Here $T_1=T_1(M,\delta,f_0)$ 
and is chosen independently of $i$.
\end{lemma}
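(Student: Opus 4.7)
The plan is to reduce both claims to the fact that $\bar{f}_i(0,\cdot)=f_{0,i}$, which forces the auxiliary function $w(t,x):=\pa_x \bar{f}_i(t,x)-\pa_x f_{0,i}(x)$ and its spatial derivative $\pa_x w(t,x) = \pa_x^2 \bar{f}_i(t,x) - \pa_x^2 f_{0,i}(x)$ to vanish identically at $t=0$. Since $\bar{f}_i \in C^{\frac{4+\alpha}{4}, 4+\alpha}$ and $f_{0,i} \in C^{4, \alpha}$, the functions $w$ and $\pa_x w$ are $\frac{3+\alpha}{4}$- and $\frac{2+\alpha}{4}$-Hölder in time respectively, with seminorms controlled by $K := \|\bar{f}_i\|_{C^{\frac{4+\alpha}{4},4+\alpha}} + \|f_{0,i}\|_{C^{4,\alpha}}$.

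To prove \eqref{eq:stellina} I would control each of the three ingredients of $\|w\|_{C^{\frac{\alpha}{4},\alpha}}$ separately by converting the smallness of $w$ (resp.\ $\pa_x w$) at $t=0$ into a positive power of $T$. For the supremum part, $|w(t,x)| = |w(t,x)-w(0,x)| \leq C T^{\frac{3+\alpha}{4}} K$. For the time-Hölder seminorm of exponent $\frac{\alpha}{4}$, the higher time regularity $\frac{3+\alpha}{4}$ trades the excess exponent for length, giving $|w(t,x)-w(s,x)| \leq C|t-s|^{\frac{3+\alpha}{4}} K$ and thus $|w(t,x)-w(s,x)|/|t-s|^{\frac{\alpha}{4}} \leq C T^{\frac{3}{4}} K$. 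For the space-Hölder seminorm of exponent $\alpha$, the key additional ingredient is that $\pa_x w$ also vanishes at $t=0$, whence $\|\pa_x w(t,\cdot)\|_{\infty} \leq C T^{\frac{2+\alpha}{4}} K$; combined with the mean-value estimate $|w(t,x)-w(t,y)|\leq \|\pa_x w(t,\cdot)\|_{\infty}|x-y|$ and $|x-y|\leq 1$ this yields $|w(t,x)-w(t,y)|/|x-y|^{\alpha} \leq C T^{\frac{2+\alpha}{4}} K$. Since $T<1$ and each of $\frac{3+\alpha}{4},\ \frac{3}{4},\ \frac{2+\alpha}{4}$ exceeds $\frac{\alpha}{4}$, combining the three bounds yields \eqref{eq:stellina}.

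Given \eqref{eq:stellina}, inequality \eqref{eq:immersion} follows from a simple triangle-inequality argument: since $\bar{f}_i\in X_i$ gives $\|\bar{f}_i\|_{C^{\frac{4+\alpha}{4},4+\alpha}}\leq M$, choosing $T_1 \in (0,1)$ small enough (depending on $M$, $\delta$ and $f_{0,i}$) so that $C T_1^{\frac{\alpha}{4}}(M+\|f_{0,i}\|_{C^{4,\alpha}}) \leq \delta/2$ gives $|\pa_x \bar{f}_i(t,x)| \geq |\pa_x f_{0,i}(x)| - \delta/2 \geq \delta/2$ on $[0,T_1]\times[0,1]$; taking the minimum of the $T_1$'s over the finite index set $i \in \{1,\ldots,q\}$ yields a choice independent of $i$.

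The only genuinely delicate point is the space-Hölder seminorm: vanishing of $w$ at $t=0$ alone would control only the $L^\infty$ and time-seminorm pieces; the essential trick is to exploit the additional vanishing of $\pa_x w$ at $t=0$, which converts the resulting Lipschitz-in-$x$ smallness into Hölder-in-$x$ smallness using compactness of the interval $[0,1]$.
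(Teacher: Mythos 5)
Your argument is correct and is essentially the paper's: the paper proves \eqref{eq:stellina} by citing Lemma~\ref{lem:propHolder} with $m=0$, $l=1$ applied to $v=\bar f_i-f_{0,i}$ (extended constantly in time), and the proof of that lemma consists of exactly the three estimates you carry out inline — converting the vanishing at $t=0$ into powers of $T$ for the sup norm and the time seminorm, and bounding the spatial $\alpha$-seminorm by the sup of one more spatial derivative using $|x-y|\leq 1$. The deduction of \eqref{eq:immersion} by the triangle inequality and the choice of $T_1$ uniform in $i$ is identical to the paper's.
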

\begin{proof}
Since $\bar{f}_i \equiv f_{0,i}$ at $t=0$ the first inequality follows directly from Lemma \ref{lem:propHolder} with $m=0$ and $l=1$ and the fact that we have extended $f_{0,i}$ as a constant in time. 

For \eqref{eq:immersion} we observe that using the first part of the claim and the definition of $\delta$ in \eqref{eq:defdelta} we have
\begin{align*}
| \pa_x \bar{f}_i (t,x)| & \geq | \pa_x f_{0,i} (t,x)| - \| \pa_x f_{0,i} - \pa_x \bar{f}_i \|_{C^0([0,T]\times [0,1])} \\
& \geq \delta - CT^{\frac{\alpha}{4}} \Big(\|\bar{f}_i \|_{C^{\frac{4+\alpha}{4},4+\alpha}} + \|f_{0,i} \|_{C^{4,\alpha}([0,1])}\Big)\\
& \geq \delta - CT^{\frac{\alpha}{4}} \Big(M + \|f_{0,i} \|_{C^{4,\alpha}([0,1])}\Big) \geq \frac12 \delta \, ,
\end{align*} 
by choosing $T_1<1$  such that $T_1^{\frac{\alpha}{4}}C (M + \|f_{0,i} \|_{C^{4,\alpha}([0,1])}) \leq \frac12 \delta$ for any $i\in \{1, \dots ,q\}$.
\end{proof}
Next, let us construct the operator $\mathcal{R}$.
\subsection{The linear system}

\paragraph{The linear PDEs}
Define for $i\in \{1, \dots,q\}$ the coefficients $D_i=D_i(x)$ where
\begin{equation}\label{eq:defDj}
0<D_i := \frac{1}{|\partial_x f_{0,i}|} \, .
\end{equation}
In particular, since the initial curves are sufficiently smooth, there exists $\tilde{\delta}>0$ such that
\begin{equation}\label{eq:deltatilde}
\tilde{\delta}= \min\{D_{i}(x): x\in[0,1] \mbox{ and }i\in \{1, \dots,q\}\} \, .
\end{equation}
Moreover, for $\bar{f}_i \in X_i$ with $i\in \{1, \dots,q\}$ fixed, set $R_i^j=R_i^j(t,x)$,  $j \in \{ 1, \ldots, n\}$, where
\begin{equation}\label{eq:R_i}
R_i^j:= \Big(\frac{1}{|\partial_x f_{0,i}|^4}-\frac{1}{|\partial_x \bar{f}_{i}|^4}\Big)\pa_x^4 \bar{f}_{i}^j \, . 
\end{equation}

Then, for $f_{0,i}$, $i\in \{1, \dots,q\}$, as in Theorem \ref{teo:STEpde} the linear system we consider is
\begin{equation}\label{eq:systemlinear}
\begin{cases}
\pa_t f_i^j+(D_i)^4 \pa_x^4 f_i^j=R_i^j+h^j(\bar f_i) \mbox{ in }(0,T)\times(0,1),\\
f_i^j(t=0,x)=f_{0,i}^j(x), \; x\in [0,1],
\end{cases}
\end{equation}
for $j$ and $i$ as before and with appropriate linear boundary condition that we now derive. Notice that $h^j(\bar f_i)$ denotes the $j-th$ component of the vector $h(\bar f_i)$ defined in \eqref{eq:gi}.

\paragraph{The linear boundary conditions}

We have two boundary points for each curve. One is fixed while the other is the junction point and hence moving. At the fixed boundary points we already have linear boundary conditions (recall \eqref{eq:nonlinearbc}) and hence we can concentrate on the junction point.

At the junction point we have the boundary conditions
$$\left\{\begin{aligned}
\pa_x^2 f_i(t,0) &=0  &\mbox{ for all }t\in (0,T), \ i\in \{1, \dots,q\},\\
f_i(t,0) & =f_j(t,0) &\mbox{ for all }t\in (0,T), \ i,j\in \{1, \dots,q\} \\
\mbox{and } & \sum_{i=1}^q (\nabla_s \vec{\kappa}_{i}(t,0) - \lambda_i \partial_s f_{i}(t,0)) = 0  &\mbox{ for all }t\in (0,T),
\end{aligned} \right.$$
and hence only the last one needs to be linearized. By \eqref{eq:Aderkappa} and since $\pa_x^2 f_i=0$ at $x=0$ we find 
\begin{align}
\nabla_{s}\vec{\kappa}_i & = \pa_s \vec{\kappa}_i=\frac{\pa_x^3 f_{i}}{|\pa_x f_{i}|^{3}} - \langle \frac{\pa_x^3 f_{i}}{|\pa_x f_{i}|^{5}}, \pa_{x} f_i \rangle \pa_{x} f_i  \notag \\
& = \frac{1}{|\pa_x f_{i}|^{3}} \left(Id_{n \times n}- \pa_s f_i \otimes \pa_s f_i\right) \pa_x^3 f_{i} \, . \label{jay}
\end{align}

We consider a linear boundary condition using the initial datum and a given vector $\bar{f} \in \prod_{i=1}^q X_i$ as follows
$$\sum_{i=1}^q E_i \pa_x^3 f_i = b \in \R^n, $$
where the $n \times n$ matrices $E_i$, $i\in \{1, \dots,q\}$, are given by
\begin{equation}\label{eq:matrixE_i}
E_i= (D_i)^3 (Id_{n \times n}- d_i \otimes d_i)\, ,
\end{equation}
where the $d_i$'s are the normalized tangential vectors of the initial data, that is
\begin{equation}\label{eq:vectorsd_i}
d_i:= D_i \pa_x f_{0,i} = \frac{\pa_x f_{0,i}}{|\pa_x f_{0,i}|}. 
\end{equation}
The vector field $b$ is given by
\begin{equation}\label{eq:vectorb}
b = b(\bar{f})= \sum_{i=1}^q (E_i- \bar{E_i}) \pa_x^3 \bar{f}_i + \lambda_i \frac{\pa_x \bar{f}_i}{|\pa_x \bar{f}_i|}, 
\end{equation}
where 
$$\bar{E_i}= \frac{1}{|\pa_x \bar{f}_i|^3} (Id_{n \times n}- \pa_s \bar{f}_i \otimes \pa_s \bar{f}_i) \, .$$
Let us notice that each matrix $E_i$ has determinant zero, but as we will see below, since we consider the sum $\sum_{i} E_{i}$ the boundary condition is still well posed under the assumption of non-collinearity (NC).

Summing up the linear boundary conditions we consider are
\begin{equation}\label{eq:linbc}
\left\{\begin{aligned}
f_i(t,1)& =P_i, &\mbox{ for all }t\in (0,T), \ i\in \{1, \dots,q\}, \\
\pa_x^2 f_i(t,1) & =0=\pa_x^2 f_i(t,0)  &\mbox{ for all }t\in (0,T), \ i\in \{1, \dots,q\},\\
f_i(t,0) & =f_j(t,0) &\mbox{ for all }t\in (0,T), \ i,j\in \{1, \dots,q\},\\
\mbox{and }& \sum_{i=1}^q E_i \pa_x^3 f_i(t,0) = b  &\mbox{ for all }t\in (0,T),
\end{aligned} \right.
\end{equation}
with $b$ defined in \eqref{eq:vectorb}. This choice of $b$ ensures that a fixed point of the associated solution operator will satisfy the boundary conditions \eqref{eq:nonlinearbc}.

\subsection{Existence of solution to the linear problem}
The operator $\mathcal{R}$ is defined as follows: given $\bar{f} \in \prod_{i=1}^q X_i$ we set $\mathcal{R} \bar{f}$ to be the unique solution $f$ of the linear parabolic system  \eqref{eq:systemlinear}, \eqref{eq:linbc}. This can be done according to the next theorem:
\begin{teo}\label{teo:existlin}
Let the assumptions of Theorem \ref{teo:STEpde} hold.  
Let $M>0$ and let $T>0$ be such that the curves belonging to  $X_{i}$ are regular. 
Then for any $\bar{f} \in \prod_{i=1}^q X_i$
there exists $f=(f_1,\dots,f_q)$,   
$f_{i} \in C^{\frac{4+\alpha}{4},4 +\alpha}([0,T]\times [0,1]; \R^n)$, 
$i\in \{1, \dots,q\}$, unique solution of the linear parabolic system \eqref{eq:systemlinear} together with the boundary conditions \eqref{eq:linbc}.

Moreover, there exists a constant $C_0>0$ such that
\begin{align}\label{eq:estSol1}
& \sum_{i=1}^q \| f_i\|_{C^{\frac{4+\alpha}{4},{4+\alpha}}([0,T]\times [0,1])} \\ \nonumber
& \leq C_0 \Big( \sum_{i=1}^q ( \| R_i+h(\bar{f}_i)\|_{C^{\frac{\alpha}{4},\alpha}([0,T]\times [0,1])} + \|f_{0,i}\|_{C^{4,\alpha}([0,1])} + |P_i|)  + \| b\|_{C^{0,\frac{1+\alpha}{4}}([0,T])} \Big) \, .
\end{align}
The constant $C_0$ depends on $n$, $q$, $\delta$ and $\tilde{\delta}$.
\end{teo}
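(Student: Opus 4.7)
The plan is to apply Solonnikov's theorem on linear parabolic boundary value problems in anisotropic H\"older spaces (see \cite{Sol}), which then delivers both existence/uniqueness and the a priori estimate \eqref{eq:estSol1}. The hypotheses to verify are: (i) uniform parabolicity of the principal part, (ii) the complementing (Lopatinskii--Shapiro) condition at the two boundaries $x=0$ and $x=1$, and (iii) the order-zero compatibility of the initial datum with the linearised boundary operator.

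Parabolicity is immediate since the principal part of \eqref{eq:systemlinear} is diagonal, consisting of $qn$ decoupled scalar fourth-order operators $\partial_t+D_i^4\partial_x^4$; the coefficient $D_i^4=|\partial_x f_{0,i}|^{-4}$ is bounded below by $\tilde\delta^4$ via \eqref{eq:deltatilde} and of class $C^{3,\alpha}$ in $x$. The right-hand sides $R_i+h(\bar f_i)$ belong to $C^{\frac{\alpha}{4},\alpha}$: indeed $\bar f_i\in X_i$ and $|\partial_x\bar f_i|\ge \delta/2$ by Lemma \ref{lem:delta}, so all products, quotients and powers of $\partial_x^k\bar f_i$ up to $k=4$ appearing in $R_i$ and $h(\bar f_i)$ are controlled in this H\"older class. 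At the fixed endpoint $x=1$ the boundary conditions $f_i(t,1)=P_i$ and $\partial_x^2 f_i(t,1)=0$ decouple across curves and across components, so the complementing condition reduces to the well-known check for a single scalar fourth-order parabolic operator with Dirichlet and second-derivative data.

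The crucial point is the complementing condition at the junction $x=0$. Freezing coefficients at $t=0,x=0$, I look for bounded solutions of $\lambda v_i+D_i^4\partial_x^4 v_i=0$ on $[0,\infty)$ with $\mathrm{Re}(\lambda)\ge 0$, $\lambda\ne 0$, satisfying the homogenised boundary conditions at $0$. Each $v_i\colon[0,\infty)\to\mathbb{C}^n$ is a linear combination of the two decaying modes $v_i(x)=A_i e^{-\mu_{i,1}x}+B_i e^{-\mu_{i,2}x}$, with $\mu_{i,k}=D_i^{-1}\nu_k$ and $\nu_1,\nu_2$ the two roots of $\nu^4+\lambda=0$ having positive real part, hence independent of $i$. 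The condition $\partial_x^2 v_i(0)=0$ gives $B_i=-(\nu_1^2/\nu_2^2)A_i$, so $v_i(0)=(1-\nu_1^2/\nu_2^2)A_i$; since $\nu_1\ne\pm\nu_2$ whenever $\lambda\ne 0$ has $\mathrm{Re}(\lambda)\ge 0$, the concurrency $v_i(0)=v_j(0)$ forces $A_i=A_j=:A$. Substituting into the last boundary condition, the $D_i^{\pm 3}$ factors in $E_i$ and $\partial_x^3 v_i(0)$ cancel and one is left with
\begin{equation*}
\sum_{i=1}^q E_i\,\partial_x^3 v_i(0)\;=\;-\nu_1^2(\nu_1-\nu_2)\Big(\sum_{i=1}^q(\mathrm{Id}_{n\times n}-d_i\otimes d_i)\Big)A\;=\;0.
\end{equation*}
The scalar prefactor is nonzero, and $\sum_i(\mathrm{Id}-d_i\otimes d_i)$ is a sum of orthogonal projections onto the hyperplanes $d_i^\perp$, whose kernel equals $\bigcap_i\mathrm{span}(d_i)$; this intersection reduces to $\{0\}$ precisely when the $d_i$ are not all collinear. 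Thus (NC) forces $A=0$ and the complementing condition at $x=0$ is satisfied.

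Finally, compatibility of order zero is matched by \eqref{eq:nonlinearbcid}, \eqref{eq:compaone} and \eqref{eq:compatwo}: one checks that $\bar E_i|_{t=0}=E_i$ gives $b(\bar f)|_{t=0}=\sum_i\lambda_i d_i$, while \eqref{jay} yields $\sum_i E_i\partial_x^3 f_{0,i}(0)=\sum_i\nabla_s\vec\kappa_{0,i}(0)$, so the linearised junction condition at $t=0$ coincides with the fourth line of \eqref{eq:nonlinearbcid}; and the first-order compatibility for the Dirichlet-type conditions $f_i(1)=P_i$ and $f_i(0)=f_j(0)$ is precisely \eqref{eq:compaone}--\eqref{eq:compatwo}, using $R_i|_{t=0}=0$ and that $h(f_{0,i})$ vanishes at the boundary because $\partial_x^2 f_{0,i}=0$ there. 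Solonnikov's theorem then provides the unique $f\in\prod_i C^{\frac{4+\alpha}{4},4+\alpha}$ with \eqref{eq:estSol1}, the constant $C_0$ depending on $n,q,\delta,\tilde\delta$ through the parabolicity modulus and a lower bound on the smallest eigenvalue of $\sum_i(\mathrm{Id}-d_i\otimes d_i)$. The main obstacle is the complementing condition at the junction: each individual matrix $E_i$ has a nontrivial kernel, so well-posedness of the coupled boundary operator is genuinely a coupled phenomenon, and (NC) is the sharp geometric assumption that makes $\sum_i(\mathrm{Id}-d_i\otimes d_i)$ invertible.
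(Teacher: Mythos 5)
Your proposal is correct and, at the top level, follows the same strategy as the paper: reduce everything to the hypotheses of Solonnikov's theorem (uniform parabolicity, complementing conditions, order-zero compatibility) and let that theorem deliver existence, uniqueness and the estimate \eqref{eq:estSol1}. The one place where you genuinely diverge is the verification of the complementing condition at the junction, which is also the heart of the matter. The paper works with Solonnikov's algebraic formulation: it forms $\mathcal{A}=\mathcal{B}\hat{\mathcal{L}}$ and checks that its rows are linearly independent modulo the polynomial $M^{+}(\tau)$, which after factoring out the common roots reduces to evaluating quadratic/cubic polynomials at the two roots $\tau_{i,1},\tau_{i,2}$ and untangling a $2qn\times 2qn$ linear system. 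You instead use the equivalent ODE (stable-subspace) formulation: solve the frozen-coefficient resolvent problem on the half-line, parametrize the decaying solutions by $(A_i,B_i)$, and feed them through the boundary operators. Your bookkeeping is right ($2qn$ decaying modes against $qn+(q-1)n+n=2qn$ boundary conditions; $\nu_2=-\mathrm{i}\nu_1$ so $\nu_1\neq\pm\nu_2$ and the prefactors never vanish), and both routes land on exactly the same algebraic obstruction — your $\sum_{i}(\mathrm{Id}-d_i\otimes d_i)A=0$ is the paper's equation \eqref{eq:superq} — resolved by (NC) in the same way. Your version is shorter and makes the geometric mechanism more transparent (the kernel of the sum of projections is $\bigcap_i\mathrm{span}(d_i)$); the paper's version stays strictly inside the formalism of \cite{Sol}, so nothing needs to be said about the equivalence of the two formulations of the Lopatinskii--Shapiro condition. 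Two minor remarks: you do not mention the initial complementary condition (linear independence of the rows of $\mathcal{C}_0\hat{\mathcal{L}}$ modulo $p^{qn}$), which the paper checks but which is trivial for the identity initial operator; and your observation that $C_0$ also depends on a quantitative lower bound coming from the non-collinearity (the smallest eigenvalue of $\sum_i(\mathrm{Id}-d_i\otimes d_i)$) is, if anything, more precise than the paper's stated dependence on $n,q,\delta,\tilde\delta$ alone.
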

The theorem above give us the solution operator $\mathcal{R}$ described in \eqref{eq:descrR} above.
\subsubsection{Well-posedness of the linear problem}\label{secLP}

Here we check using \cite{Sol} that the linear parabolic problem \eqref{eq:systemlinear} with boundary conditions \eqref{eq:linbc} is well posed. 

First of all, observe that the left hand side of our system \eqref{eq:systemlinear} can be 
written as $\mathcal{L}(x,t,\pa_x,\pa_t) f$ with $f \in \prod_{i=1}^q X_i$ (i.e. $f=(f_1,\dots,f_q)$) and 
\begin{equation}\label{eq:operscriptL}
\mathcal{L}(x,t,\pa_x,\pa_t) 
= \op{diag}(\ell_{kk})_{k=1}^{qn}
\end{equation}
where $$\ell_{kk}(x,t,\pa_x,\pa_t)=\pa_t+(D_i)^4\pa_{x}^4 \mbox{ if }k= (i-1)n+j$$ 
for some $j \in \{1,..,n\}$ and $i \in \{1, \dots ,q\},$
with $D_i$ defined in \eqref{eq:defDj}.
Notice that in \cite[page 8]{Sol} also $\mathcal{L}_0$ the principal part of $\mathcal{L}$ is used. Since here $\mathcal{L}$ coincide with its principal part, for simplicity we work only with $\mathcal{L}$ avoiding $\mathcal{L}_0$ altogether. 

As usual, we associate to these differential operators polynomials with coefficients depending (possibly) on $(t,x)$ by replacing $\pa_x$ by $\mathrm{i}\xi$, $\xi \in \R$ and $\mathrm{i}= \sqrt{-1}$, and $\pa_t$ by $p$, $p \in \mathbb{C}$. Then, 
$$\ell_{kk}(x,t,\mathrm{i}\xi,p)=p+(D_i)^4\xi^4,$$ 
if $k= (i-1)n+j$ for some $j \in \{1,.., n\}$ and $i\in \{1, \dots ,q\},$. In particular, for $\lambda \in \R$
\begin{align*}
\ell_{kk}(x,t,\mathrm{i}\xi\lambda,p\lambda^4)=&p\lambda^4+(D_i)^4(\mathrm{i}\xi\lambda)^4 =\lambda^4 \ell_{kk}(x,t,\mathrm{i}\xi,p).
\end{align*}
In the following, 
\begin{equation}\label{def:L}
L(x,t,\mathrm{i}\xi,p):=\det \mathcal{L}(x,t,\mathrm{i}\xi,p) = \prod_{i=1}^q (p+(D_i)^4\xi^4)^n\, ,
\end{equation}
and $L(x,t,\mathrm{i}\xi \lambda,p\lambda^4)=\lambda^{4\cdot qn}L(x,t,\mathrm{i}\xi,p)$, see \cite[Eq. (1.2)]{Sol}.

Let
\begin{align} \label{eq:Lhat}
\hat{\mathcal{L}} (x,t,\mathrm{i} \xi,p) & \col L(x,t,\mathrm{i} \xi,p) \, \mathcal{L}^{-1}(x,t,\mathrm{i} \xi,p) \\ \nonumber
&= \prod_{i=1}^q (p+(D_i)^4\xi^4)^n  \op{diag}((\ell_{kk})^{-1})_{k=1}^{qn}
= \op{diag}(A_{kk})_{k=1}^{qn}
\end{align}
with 
\begin{equation*}
A_{kk}=A_{kk}(x,t,\mathrm{i} \xi,p)=\frac{\prod_{i=1}^q (p+(D_i)^4\xi^4)^n}{p+(D_l)^4\xi^4} \, ,
\end{equation*}
if $k=(l-1)n+j$ for $l \in \{1, \dots ,q\}$ and $j \in \{1, .., n\}$. 
Since most of the terms are equal, let
$A_1:=A_{11}$, $A_2:= A_{n+1,n+1}$ and so on, i.e. let 
\begin{equation}\label{eq:defAj}
A_i:=A_{(i-1)n+1,(i-1)n+1} \mbox{ for }i\in \{1,\dots,q\} \, .  
\end{equation}
Notice that for $k \in \{1, ..,n\}$ we have 
$A_{(i-1)n+k,(i-1)n+k}=A_{i}$ for $i \in \{i,\dots,q\}$.

\paragraph{Parabolicity condition}
For $\xi \in \R$ and by \eqref{def:L} we see that the roots (in the variable $p$) of the polynomial 
$L(x,t,\mathrm{i}\xi,p)$ 
 are given by
$$
p=-(D_i)^4\xi^4\qquad\forall \,i\in \{1,\dots,q\} \mbox{ and each one with multiplicity }n
$$
and satisfy
$$
\op{Re} p=-(D_i)^4\xi^4\leq-\tilde{\delta}^4\xi^{2b}\text{ with }b=2\quad\forall \, \xi\in\R,\, \forall\,(t,x),$$ 
and  with $\tilde{\delta} = \min\{D_i: i\in \{1,\dots,q\}\}$ (see \eqref{eq:deltatilde}). So the parabolicity condition \cite[Page 8]{Sol} is satisfied and we even have uniform parabolicity.

\paragraph{Initial complementary conditions}

Since our initial conditions are
\begin{align*}
f_i^{j}(t,x)\mid_{t=0}&=f_{0,i}^j(x), \, i\in \{1,\dots,q\}, \, j\in \{1,..,n\},
\end{align*}
the associated matrix is 
\begin{equation}\label{eq:matrixic}
\mathcal{C}_0(x,\pa_x,\pa_t) = Id_{qn \times qn} \, . 
\end{equation}
According to \cite[Page 12]{Sol} we need to verify that the rows of the matrix $\mathcal{D}(x,p)=\mathcal{C}_{0}(x,0,p)\cdot\hat{\mathcal{L}}(x,0,0,p)$ are linearly independent modulo $p^{qn}$ ($r=qn$ in \cite[page 12]{Sol}, since we have $q$ curves). With \eqref{eq:Lhat} and \eqref{eq:defAj} we find that 
$\mathcal{D}(x,p)=
\op{diag}(p^{qn-1})\in\R^{qn\times qn}$ from which the linear independency of the rows immediately follows.

\paragraph{The polynomial $M^{+}$}

Next, consider the polynomial $L=L(x,t,\mathrm{i}\tau,p)$ given in \eqref{def:L} (with $\tau$ instead\footnote{In our case $\xi$ is a vector in $\R$ but in the general setting of \cite{Sol} $\xi \in \R^d$. Now working at the boundary one uses the tangential $\zeta$ and normal $\tau$ directions of the vector $\xi$. Since we are in dimension one there is no tangential direction, i.e. $\zeta=0$.}{ of $\xi$} as in \cite{Sol} to stress that we are now working at the boundary points), that is 
\begin{equation*}
L(x,t,\mathrm{i}\tau,p) = \prod_{i=1}^q (p+(D_i)^4\tau^4)^n\, .
\end{equation*}
As a function of $\tau$, the polynomial $L$ has $2q n$ roots
with positive real part and $2qn$ roots with negative real part provided  $\operatorname{Re}p\geq 0$ and $p \ne 0$ (see \cite[Page~11]{Sol}).  
Indeed, due to the assumptions on $p$ we may write $p=|p| e^{i \theta_p}$ with $-\frac12 \pi \leq \theta_p \leq \frac12 \pi$ and $|p|\ne 0$. Then, the roots have to satisfy for some $i\in \{1,\dots,q\}$
$$ \tau^4 = -\frac{p}{(D_i)^4} \Longrightarrow  \tau^4 = \frac{|p|}{(D_i)^4} e^{\mathrm{i}(\pi + \theta_p)} \, .$$
The (distinct) roots with positive imaginary part are for $i\in \{1,\dots,q\}$
\begin{equation}\label{eq:rootspos}
\tau_{i,1}(x,p) = r_i e^{\mathrm{i} \frac14 (\theta_p+\pi)} \mbox{ and } \tau_{i,2}(x,p) = r_i e^{\mathrm{i} \frac14 (\theta_p+3\pi)}, \mbox{ with }r_i(x,p):= \frac{\sqrt[4]{|p|}}{D_i}, 
\end{equation}
each with multiplicity $n$. With these roots we define the polynomial
\begin{gather*}
 M^+(x,\tau,p)=\prod_{i=1}^q (\tau-\tau_{i,1})^n (\tau-\tau_{i,2})^n\,.
\end{gather*}
For later let us write also the (distinct) roots with negative imaginary part. These are\footnote{In the notation of Solonnikov $L=L(\tau)$ has $b\cdot r=2\cdot qn=2qn$ roots with negative imaginary parts and $2qn$ with negative imaginary parts.}
$$\tau_{i,3}(x,p) = r_i e^{\mathrm{i} \frac14 (\theta_p+5\pi)} \mbox{ and } \tau_{i,4}(x,p) = r_i e^{\mathrm{i} \frac14 (\theta_p+7\pi)}, \; i\in \{1,\dots,q\},
$$
each with multiplicity $n$.

\paragraph{Complementary conditions at the fixed boundary points}
 
By \eqref{eq:linbc}, at $x=1$ the boundary condition system reads: $\mathcal{B} f= \bar{b}$  where $f=(f_{1},\dots,f_q)^{T} \in \R^{qn}$ with
$$\mathcal{B}(x=1,t,\pa_x,\pa_t) = \begin{pmatrix}
B& 0& \dots & 0\\
0&B& \dots & 0\\
0 & 0 & \dots &0\\
0&0& \dots & B\end{pmatrix}$$
a $2qn \times qn$ matrix where $B$ is a $2n \times n$ matrix given by
$$
B=B(x=1,t,\pa_x,\pa_t) = \begin{pmatrix}
Id_{n\times n}\\
Id_{n \times n} \pa_x^2\end{pmatrix} \,$$
so that
$$ B=B(x=1,t,\mathrm{i} \tau,p) = \begin{pmatrix}
Id_{n\times n}\\
-Id_{n \times n} \tau^2\end{pmatrix},$$
and
$$ \bar{b}= (P_1,0,P_2,0, \dots, P_q,0)^{T} \in \R^{2q n} \, . $$
According to \cite[Page~11]{Sol} we need to check that at $x=1$ the rows of the matrix
$$
\mathcal{A}(x=1,t,\mathrm{i}\tau,p):=\mathcal{B}(x=1,t,\mathrm{i}\tau,p)\hat{\mathcal{L}}(x=1,t,\mathrm{i}\tau,p)
$$
are linearly independent modulo $M^+(x=1,\tau,p)$ for 
 $\op{Re}p\geq 0$ and $p\ne0$. In the following we simply write $M^+(\tau)$ since $x$ and $p$ are fixed and there is no explicit dependence on time.

Since $\mathcal{B}$ is a block-matrix and $\hat{\mathcal{L}}$ is a diagonal matrix (see \eqref{eq:Lhat}), $\mathcal{A}$ is also a block matrix. Hence for the linear independence of the rows it is sufficient to consider the different blocks separately, i.e. each curve separately. 
For simplicity we consider the first curve only, that is the first $2n$ rows. We do not need to consider the columns that are identically zero and hence we simply have to consider the rows of the $2n \times n$ matrix
$$ B=B(x=1,t,\mathrm{i} \tau,p) = \begin{pmatrix}
A_1 Id_{n\times n}\\
-A_1 Id_{n \times n} \tau^2\end{pmatrix} \, ,$$
since $A_{kk}=A_1$ for $k\in \{1,..,n\}$ by \eqref{eq:defAj} with
\begin{align*}
A_1(x=1,t, \mathrm{i}\tau,p)&=(p+(D_1)^4\tau^4)^{n-1} \prod_{i=2}^q(p+(D_i)^4\tau^4)^{n}  .
\end{align*}

Now to check the linear independence of the rows modulo $M^+$ we have to 
verify that if there exists $\omega \in \R^{2n}$ such that
$$\omega^{T} \begin{pmatrix}
A_1 Id_{n\times n}\\
-A_1 Id_{n \times n} \tau^2\end{pmatrix}  = (0, \dots, 0) \text{ mod }M^{+}(\tau),$$
then necessarily $\omega=0$.

Now let us recall that $M^{+}$ is the polynomial whose roots are exactly the roots with positive imaginary part of $\prod_{i=1}^q (p + (D_i)^4 \tau^4)^n= A_1 (p+(D_1)^4 \tau^4)$. As a consequence, $A_{1}$ and $M^{+}$ have many factors in common, which we can factor out. More precisely, looking at the first equation of the system above and denoting by $\omega^j$ the $j$-th component of $\omega$, we observe that
\begin{align*}
A_1(\omega^1-\omega^{n+1} \tau^2)=0\text{ mod }M^+(\tau)
\end{align*}
if and only if
\begin{align*}
a_1(\tau)(\omega^1-\omega^{n+1} \tau^2)=0\text{ mod } s_1(\tau)
\end{align*}
where
\begin{align}\label{eq:a_1}
a_1(\tau) & = (\tau-\tau_{1,3}(p))^{n-1} (\tau-\tau_{1,4}(p))^{n-1}  \prod_{i=2}^q (\tau-\tau_{i,3}(p))^n (\tau-\tau_{i,4}(p))^n, \\ \label{eq:s_1}
s_1(\tau)&=(\tau-\tau_{1,1}(p)) (\tau-\tau_{1,2}(p)) \, .
\end{align}
Since $s_1(\tau)$ can not divide $a_1(\tau)$ then it has to divide 
$\omega^1-\omega^{n+1} \tau^2$ that
is also a polynomial of degree two. 
This polynomial has the same zeroes as $s_1(\tau)$ iff
$$\begin{cases}
\omega^1-\omega^{n+1} \mathrm{i} r_1^2 e^{\mathrm{i} \frac{\theta_p}{2}}=0\\
\omega^1+\omega^{n+1} \mathrm{i} r_1^2 e^{\mathrm{i} \frac{\theta_p}{2}}=0
\end{cases}$$
which implies $\omega^1=\omega^{n+1}=0$. Similarly one consider the other components and also the other curves.

\paragraph{Complementary conditions at the junction}

At $x=0$, using \eqref{eq:linbc}, the boundary condition of the linearized system reads: $\mathcal{B} f= \bar{b}$ where $f=(f_{1},f_2,\ldots, f_q)^{T} \in \R^{nq}$ with $\mathcal{B}$ a $(2nq) \times qn$ matrix given by
$$\mathcal{B}(x=0,t,\pa_x,\pa_t) = \begin{pmatrix}
Id_{n \times n}& -Id_{n\times n} & 0 & \ldots & 0\\
Id_{n \times n}& 0 & -Id_{n\times n} & \ldots & 0 \\
\vdots &  \vdots& \vdots& \vdots&  \vdots\\
Id_{n \times n}& 0 & 0&  \ldots &-Id_{n\times n}  \\
Id_{n\times n} \pa_x^2 & 0 &0& \ldots & 0 & \\
0 & Id_{n\times n} \pa_x^2 & 0 &\ldots&  0 \\
\vdots &  \vdots& \vdots& \vdots& \vdots\\
0& 0& 0 &  &Id_{n\times n} \pa_x^2\\
E_1 \pa_x^3&E_2 \pa_x^3 &E_3 \pa_x^3 & \ldots & E_q \pa_x^3
\end{pmatrix}$$
with $E_i$, $i=1,\ldots,q$, $n \times n$ matrices defined in \eqref{eq:matrixE_i} and $\bar{b}= (0,0,\ldots, 0,b)^{T} \in \R^{2nq}$  with $b \in \R^n$ defined in \eqref{eq:vectorb}.  The first $(q-1)n$ rows describe the concurrency condition, the last $n$ rows give the third order boundary condition while the others correspond to the second order boundary condition.

As before, we need to check that at $x=0$ the rows of the matrix
$$
\mathcal{A}(x=0,t,\mathrm{i}\tau,p):=\mathcal{B}(x=0,t,\mathrm{i}\tau,p)\hat{\mathcal{L}}(x=0,t,\mathrm{i}\tau,p)
$$
are linearly independent modulo $M^+(x=0,\tau,p)$ for
 $\op{Re}p\geq 0$ and $p\ne0$. By \eqref{eq:Lhat} we have to study the rows of the matrix
 $\mathcal{A}_0(\tau):=  \mathcal{A}(x=0,t,\mathrm{i}\tau,p)$ with
\begin{align*} 
\mathcal{A}(x=0,t,\mathrm{i}\tau,p) =
\begin{pmatrix}
A_{1} Id_{n \times n}& -A_{2}Id_{n\times n} & 0 & \ldots & 0\\
A_{1}Id_{n \times n}& 0 & -A_{3}Id_{n\times n} & \ldots & 0 \\
\vdots &  \vdots& \vdots& \vdots&  \vdots\\
A_{1}Id_{n \times n}& 0 & 0&  \ldots &-A_{q}Id_{n\times n}  \\
-A_1 \tau^2 Id_{n\times n}  & 0 &0& \ldots & 0 & \\
0 & -A_2 \tau^2 Id_{n\times n}  & 0 &\ldots&  0 \\
\vdots &  \vdots& \vdots& \vdots& \vdots\\
0& 0& 0 &  &-A_q \tau^2 Id_{n\times n} \\
-\mathrm{i}A_1 \tau^3 E_1&-\mathrm{i} A_2\tau^3E_2&-\mathrm{i} A_3\tau^3E_3 & \ldots &-\mathrm{i} A_q\tau^3E_q \end{pmatrix} 
\end{align*}
where the coefficients $A_i$, $i=1,2,\ldots, q$, are defined in \eqref{eq:defAj}.

Let us assume that there exists $\omega \in \R^{2nq}$ such that
$$\omega^{T} \mathcal{A}_0(\tau)= (0,\dots,0) ,\text{ mod }M^{+}(\tau) \, .$$
This is a system of $q n$ equations in $2qn$ variables. 

The factors $A_1 , A_2 , \ldots, A_q$ have many factors in common with the polynomial $M^{+}$ so that we can rewrite the system in the following way. 
The first $n$ equations can be written for $k=1, .., n$ as
\begin{equation}\label{eq:firstnq}
a_1(\tau) \big(\omega^k +\omega^{n+k}+\ldots+ \omega^{n(q-2)+k} - \omega^{(q-1)n+k} \tau^2 - \mathrm{i} \sum_{j=1}^n E_{1,jk} \omega^{(2q-1)n+j} \tau^3\big)=0 ,\text{ mod }s_1(\tau),
\end{equation}
with $a_1,s_1$ defined in \eqref{eq:a_1} and \eqref{eq:s_1} respectively and $E_{1,jk}$ denoting the $j,k$ entry of the matrix $E_1$. Defining $a_2,a_3,\ldots, a_{q}, s_2,s_3, \ldots, s_{q}$ accordingly we find that the $n+k$-th equation, $k=1,..,n$, can be simplified to
\begin{equation}\label{eq:secondnq}
a_2(\tau) \big(-\omega^k - \omega^{qn+k} \tau^2 - \mathrm{i} \sum_{j=1}^n E_{2,jk} \omega^{(2q-1)n+j} \tau^3\big)=0 ,\text{ mod }s_2(\tau).
\end{equation}
Likewise
\begin{align*}
a_3(\tau) &\big(-\omega^{n+k} - \omega^{(q+1)n+k} \tau^2 - \mathrm{i} \sum_{j=1}^n E_{3,jk} \omega^{(2q-1)n+j} \tau^3\big)=0 ,\text{ mod }s_3(\tau),\\
\vdots
\end{align*}
while the $(q-1)n+k$-th  equation for $k=1, ..,n$ can be written as
\begin{equation}\label{eq:thirdnq}
a_q(\tau) \big(-\omega^{n(q-2)+k} - \omega^{2(q-1)n+k} \tau^2 - \mathrm{i} \sum_{j=1}^n E_{q,jk} \omega^{(2q-1)n+j} \tau^3\big)=0 ,\text{ mod }s_q(\tau).
\end{equation}

Since the polynomial $a_i$ does not vanish on the zeros of the polynomial $s_i$ we do not need to consider further the factors $a_i$'s in each one of the equations above. Hence, each algebraic equation above is reduced to the form
\begin{equation*}
 a-bx^2+cx^3 =0 \quad \text{ mod } (x-x_1)(x-x_2) ,
\end{equation*}
with $x_1\ne \pm x_2$. Our idea is now to plug in the zeroes $x_1$ and $x_2$ as done before for the other boundary conditions. We get then the conditions
\begin{equation*}
\begin{cases}
a-bx^2_1+cx^3_1 =0,\\
a-bx^2_2+cx^3_2 =0.
\end{cases}
\end{equation*}
Subtracting the two equations we get first the relation
\begin{equation}
b=\frac{x^2_1+x_1x_2+x^2_2}{x_1+x_2} c
\text{.}
\label{eq:b-c}
\end{equation}
and then, from the first equation,
\begin{equation}
a=\frac{x_1^2 x_2^2}{x_1+x_2} c
\text{.}
\label{eq:a-c} 
\end{equation}

The roots $\tau_{i,1}, \tau_{i,2}$ of $s_i(\tau)$ are given in \eqref{eq:rootspos} and we compute (taking $x_k=\tau_{i,k}$, $k=1,2$)
$$\frac{x^2_1+x_1x_2+x^2_2}{x_1+x_2} = \mathrm{i} r_i\frac{1}{\sqrt{2}} e^{\mathrm{i} \frac14 \theta_p} \quad \mbox{ and }
\quad \frac{x_1^2 x_2^2}{x_1+x_2} = - \mathrm{i} r_i^3\frac{1}{\sqrt{2}} e^{\mathrm{i} \frac34 \theta_p} \, .$$

We now write the $qn$ equations obtained by imposing the algebraic equation \eqref{eq:b-c} to the equations \eqref{eq:firstnq}, \eqref{eq:secondnq}, ..., up to \eqref{eq:thirdnq}. In \eqref{eq:firstnq} we have $b=\omega^{(q-1)n+k}$ and $c=-\mathrm{i}\sum_{j=1}^n E_{1,jk} \omega^{(2q-1)n+j}$, for $k=1,..,n$, so that from \eqref{eq:b-c} we get the $n$ equations
\begin{equation}\label{eq:three1q}
\omega^{(q-1)n+k}= r_1\frac{1}{\sqrt{2}} e^{\mathrm{i} \frac14 \theta_p} \sum_{j=1}^n E_{1,jk} \omega^{(2q-1)n+j} \,, \quad  k=1,..,n .
\end{equation}
Similarly, in \eqref{eq:secondnq}, (and then analogousy up to \eqref{eq:thirdnq}), we have $b=\omega^{qn+k}$ and $c= - \mathrm{i} \sum_{j=1}^n E_{2,jk} \omega^{(2q-1)n+j}$ and \eqref{eq:b-c} implies
\begin{align}\label{eq:three2q} 
\omega^{qn+k}&= r_2\frac{1}{\sqrt{2}} e^{\mathrm{i} \frac14 \theta_p} \sum_{j=1}^n E_{2,jk} \omega^{(2q-1)n+j} \,, \quad  k=1,..,n ,
\\
\vdots & \nonumber \\
\label{eq:three3q}
\omega^{2(q-1)n+k}&= r_q\frac{1}{\sqrt{2}} e^{\mathrm{i} \frac14 \theta_p} \sum_{j=1}^n E_{q,jk} \omega^{(2q-1)n+j} \,, \quad  k=1,..,n .
\end{align}
From these equations we immediately see that the components $\omega^{m}$ for $(q-1)n+1\leq m\leq2(q-1)n$ are determined by the components $\omega^m$ for $ (2q-1)n+1\leq m\leq 2qn$.

The $qn$ equations obtained by imposing the algebraic equation \eqref{eq:a-c} to \eqref{eq:firstnq}, \eqref{eq:secondnq}, ..., up to \eqref{eq:thirdnq} are given by, for $k=1,..,n$
\begin{align}\nonumber
\omega^k + \omega^{n+k} + \ldots + \omega^{n(q-2)+k} & = - r_1^3\frac{1}{\sqrt{2}} e^{\mathrm{i} \frac34 \theta_p} \sum_{j=1}^n E_{1,jk} \omega^{(2q-1)n+j}\\ \label{eq:systemalmostq}
-\omega^k & = - r_2^3\frac{1}{\sqrt{2}} e^{\mathrm{i} \frac34 \theta_p} \sum_{j=1}^n E_{2,jk} \omega^{(2q-1)n+j}\\ \nonumber
-\omega^{k+n} & = - r_3^3\frac{1}{\sqrt{2}} e^{\mathrm{i} \frac34 \theta_p} \sum_{j=1}^n E_{3,jk} \omega^{(2q-1)n+j}\\ \nonumber
\vdots & \nonumber\\
-\omega^{k+ n(q-2)} & = - r_q^3\frac{1}{\sqrt{2}} e^{\mathrm{i} \frac34 \theta_p} \sum_{j=1}^n E_{q,jk} \omega^{(2q-1)n+j} \, . \nonumber
\end{align} 
This system of $qn$ equations implies the system of $n$ equations
$$0=r_1^3\sum_{j=1}^n E_{1,jk} \omega^{(2q-1)n+j}+r_2^3 \sum_{j=1}^n E_{2,jk} \omega^{(2q-1)n+j}+ \ldots +r_q^3\sum_{j=1}^n E_{q,jk} \omega^{(2q-1)n+j}, \quad k=1,..,n, $$
that by definition of $r_i$ in \eqref{eq:rootspos}, of $E_i$ in \eqref{eq:matrixE_i} and since $|p|\ne 0$ can be rewritten as
$$0= (\tilde{E}_1 + \tilde{E}_2 +\ldots +  \tilde{E}_q)v \qquad \mbox{ where} \qquad \tilde{E}_i=Id_{n \times n}- \pi_i,$$
with $\pi_i$ the projection onto the space spanned by $d_i= D_i \pa_x f_{0,i}$ and $v \in \R^n$ with $v^k=\omega^{(2q-1)n+k}$, $k=1,2,...,n$. 
Even better, this can be rewritten as
\begin{equation}\label{eq:superq}
qv - \sum_{i=1}^q \pi_i v =0 \, .
\end{equation}
Since $| \pi_i w|\leq |w|$ for all $w \in\R^n$ and 
$| \pi_i w| = |w|$ iff $w$ and $d_i$ are linearly dependent, we see that the system has a non-trivial solution iff there exists a vector $v \in \R^n$ such that $\pi_i v = v$ for $i=1,2,\ldots, q$. 
This is the case iff $\pi_1=\pi_2=\ldots =\pi_q$, i.e., the vectors $d_i$'s 
are linearly dependent. 

By the non-collinearity condition (NC), the vectors fulfill $\dimension (\Span\{d_1,d_2,\ldots, d_q\}) \geq 2$ 
and hence the only solution to \eqref{eq:superq} is the zero-vector,  that is $\omega^{(2q-1)n+k}=0$ for $k=1, \ldots n$. In turn, $\omega^{m}$ for $(q-1)n+1\leq m \leq 2qn$ are zero (by \eqref{eq:three1q}, \eqref{eq:three2q} to \eqref{eq:three3q}). Then by \eqref{eq:systemalmostq} also $\omega^{m}$ for $1\leq m \leq (q-1)n$ are zero.
The rows are linearly independent and hence the complementary conditions are satisfied also at the junction.

\subsubsection{Proof of Theorem \ref{teo:existlin}}\label{sec:existlin}

By the regularity assumptions of the initial data $f_{0,i}$ and $\bar{f}_{i} \in X_{i}$ and using the properties of the parabolic spaces collected in Appendix~\ref{sec:fs} one can readily check that the regularity assumptions  required by \cite[Thm.4.9, page 121]{Sol}  for the coefficients of the elliptic operator
$\mathcal{L}$ in \eqref{eq:operscriptL}, and those for the coefficients of the boundary operators $\mathcal{B}$ and $\mathcal{C}_{0}$ are satisfied.

Next, thanks to the choice of the space $X_i$ in \eqref{eq:X_i} (precisely, being each function $\bar{f}_i$ equal to $f_{0,i}$ at time $t=0$) and having linearized at the initial datum one sees that since $f_{0,i}$, $i\in \{1,\dots,q\}$, satisfy the Compatibility conditions \ref{compaorder1} (i.e. the compatibility conditions of order zero of the non-linear problem) than $f_{0,i}$, $i\in \{1,\dots,q\}$, satisfies also the compatibility condition of order zero of the linear problem. Precisely, the boundary conditions
\begin{equation*}
\left\{\begin{aligned}
f_{0,i}(1)& =P_i, \;  \pa_x^2 f_{0,i}(1)  =0=\pa_x^2 f_{0,i}(0)  , \ i\in \{1,\dots,q\}, \\
f_{0,i}(0) & =f_{0,j}(0), \ i,j\in \{1,\dots,q\}, \\
\mbox{and }& \sum_{i=1}^q E_i \pa_x^3 f_{0,i} = b  ,
\end{aligned} \right.
\end{equation*}
are satisfied since at $t=0$, $\bar{E_i}=E_i$. Similarly, since $R_i=0$ at time $t=0$ (recall \eqref{eq:R_i}) we see that \eqref{eq:compaone} and \eqref{eq:compatwo} give also the remaining compatibility condition of order zero.

Since linear problem is well posed by the considerations in the previous section, the claim follows by \cite[Thm.4.9, page 121]{Sol} or \cite[Thm. VI.21]{EZ}.

\subsection{Existence by Banach fixed point theorem}

Let us recall that $f_{0,i} \in C^{\frac{4+\alpha}{4},4+\alpha}([0,T]\times [0,1])$ by extending it as a constant function in time.

\begin{rem}\label{rem:algebra}
For $m \in \mathbb{N}$, $a,b \in (0,\infty)$ one has
$$ \frac{1}{a^m}-\frac{1}{b^m}= (b-a) \frac{p_{m-1}(a,b)}{a^m b^m}, $$
with $p_{m-1}$ a polynomial in $a,b$ of degree $m-1$.
\end{rem}

\begin{lemma}\label{lem:hilfsatz}
Let $f_{0,i} \in C^{4,\alpha}([0,1])$ and $\bar{f}_i, \bar{g}_i \in X_i$ and $\delta$ as defined in \eqref{eq:defdelta}. Then, for $T<1$ we have
\begin{equation}\label{eq:hol1}
\| |\pa_x f_{0,i} |- |\pa_x \bar{f}_i|\|_{C^{\frac{\alpha}{4},\alpha}([0,T]\times[0,1])} \leq C T^{\frac{\alpha}{4}} \Big(\|\bar{f}_i \|_{C^{\frac{4+\alpha}{4},4+\alpha}([0,T]\times[0,1])} + \|f_{0,i} \|_{C^{4,\alpha}([0,1])}\Big)^3\,  ,
\end{equation}
with $C= C(n,\delta)$. Moreover, for $m \in \mathbb{N}$ 
and any $T \leq T_1$  (with $T_1$ as defined in Lemma~\ref{lem:delta}) we have
\allowdisplaybreaks{\begin{align*}
\Big\| \frac{1}{|\pa_x f_{0,i} |^m}- \frac{1}{|\pa_x \bar{f}_i|^m} \Big\|_{C^{\frac{\alpha}{4},\alpha}([0,T]\times [0,1])} & \leq C   T^{\frac{\alpha}{4}} 
\\
\Big\| \frac{1}{|\pa_x f_{0,i} (\cdot, x)|^m}- \frac{1}{|\pa_x \bar{f}_i (\cdot, x)|^m}\Big\|_{C^{0,\frac{1+\alpha}{4}}([0,T])} & \leq C T^{\frac{\alpha}{4}} 
\end{align*}
for any $x \in [0,1]$ and with  $C=C(n,m,\delta, \|\bar{f}_i \|_{C^{\frac{4+\alpha}{4},4+\alpha}([0,T]\times [0,1])}, \|f_{0,i} \|_{C^{4,\alpha}([0,1])})$
as well as 
\begin{align*}
\Big\| \frac{1}{|\pa_x \bar{f}_i |^m}- \frac{1}{|\pa_x \bar{g}_i|^m}\Big\|_{C^{\frac{\alpha}{4},\alpha}([0,T]\times [0,1])} & \leq  C  T^{\frac{\alpha}{4}}  \| \bar{f}_i - \bar{g}_i\|_{C^{\frac{4+\alpha}{4},4+\alpha}([0,T]\times [0,1])}\, ,\\
\Big\| \frac{1}{|\pa_x \bar{f}_i (\cdot, x)|^m}- \frac{1}{|\pa_x \bar{g}_i (\cdot, x)|^m}\Big\|_{C^{0,\frac{1+\alpha}{4}}([0,T])} &\leq C  T^{\frac{\alpha}{4}}  \| \bar{f}_i - \bar{g}_i\|_{C^{\frac{4+\alpha}{4},4+\alpha}([0,T]\times [0,1])}\,,
\end{align*}}
again for  $x \in [0,1]$ and  with $C=C(n,m,\delta, \|\bar{f}_i \|_{C^{\frac{4+\alpha}{4},4+\alpha}([0,T]\times [0,1])}, \|\bar{g}_i \|_{C^{\frac{4+\alpha}{4},4+\alpha}([0,T]\times [0,1])})$.
\end{lemma}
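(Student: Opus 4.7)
The plan is to prove the four estimates in order, using three ingredients: the polarization identity $|v|^2-|w|^2=\langle v-w,v+w\rangle$; the algebraic factorization in Remark~\ref{rem:algebra}; and Lemma~\ref{lem:propHolder} together with the fact that any $\bar f_i,\bar g_i\in X_i$ coincide with $f_{0,i}$ at $t=0$, which is the mechanism producing the factor $T^{\alpha/4}$.

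For \eqref{eq:hol1} I would write
\[
|\pa_x f_{0,i}|-|\pa_x\bar f_i|=\frac{\langle\pa_x f_{0,i}-\pa_x\bar f_i,\,\pa_x f_{0,i}+\pa_x\bar f_i\rangle}{|\pa_x f_{0,i}|+|\pa_x\bar f_i|}.
\]
By Lemma~\ref{lem:delta} the denominator is bounded below by $3\delta/2$ for $T\le T_1$, and since $x\mapsto 1/x$ is $C^1$ on $[\delta/2,\infty)$, the reciprocal has parabolic Hölder norm controlled linearly by $\|\pa_x f_{0,i}\|_{C^{\alpha/4,\alpha}}+\|\pa_x\bar f_i\|_{C^{\alpha/4,\alpha}}$. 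The algebra property of parabolic Hölder spaces then decomposes $\||\pa_x f_{0,i}|-|\pa_x\bar f_i|\|_{C^{\alpha/4,\alpha}}$ into a product of three factors: $\|\pa_x f_{0,i}-\pa_x\bar f_i\|_{C^{\alpha/4,\alpha}}$ (bounded by $CT^{\alpha/4}(\|\bar f_i\|+\|f_{0,i}\|)$ via \eqref{eq:stellina}), $\|\pa_x f_{0,i}+\pa_x\bar f_i\|_{C^{\alpha/4,\alpha}}$, and the norm of the reciprocal, each of the last two linear in the norms of $\bar f_i$ and $f_{0,i}$. The three linear factors combine to produce the cubic bound.

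The remaining three estimates all reduce to \eqref{eq:hol1} via Remark~\ref{rem:algebra}: with $a=|\pa_x f_{0,i}|$, $b=|\pa_x\bar f_i|$ one has
\[
\frac{1}{|\pa_x f_{0,i}|^m}-\frac{1}{|\pa_x\bar f_i|^m}=\bigl(|\pa_x\bar f_i|-|\pa_x f_{0,i}|\bigr)\,\frac{p_{m-1}(|\pa_x f_{0,i}|,|\pa_x\bar f_i|)}{|\pa_x f_{0,i}|^m\,|\pa_x\bar f_i|^m}.
\]
Estimate~\eqref{eq:hol1} yields the $T^{\alpha/4}$ from the difference factor, while $p_{m-1}$ and the denominator (bounded below by $(\delta/2)^{2m}$ via Lemma~\ref{lem:delta}) contribute constants depending only on $\delta,m,n$ and the norms of $\bar f_i,f_{0,i}$. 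For the time-only Hölder norm at fixed $x$, the same manipulation applies with the parabolic norm replaced by $C^{0,(1+\alpha)/4}([0,T])$; the $T^{\alpha/4}$ arises because $\pa_x\bar f_i(\cdot,x)-\pa_x f_{0,i}(x)$ vanishes at $t=0$ and has temporal regularity $(3+\alpha)/4$ strictly bigger than $(1+\alpha)/4$, so Lemma~\ref{lem:propHolder} (interpolation in time) gains a factor $T^{1/2}\le T^{\alpha/4}$.

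The last two bounds comparing $\bar f_i$ with $\bar g_i$ follow by the identical argument, the key point being that $\bar f_i|_{t=0}=\bar g_i|_{t=0}=f_{0,i}$, so $\pa_x\bar f_i-\pa_x\bar g_i$ vanishes at $t=0$; Lemma~\ref{lem:propHolder} then yields
\[
\|\pa_x\bar f_i-\pa_x\bar g_i\|_{C^{\alpha/4,\alpha}}\le CT^{\alpha/4}\|\bar f_i-\bar g_i\|_{C^{(4+\alpha)/4,4+\alpha}},
\]
which is the source of both the $T^{\alpha/4}$ factor and the Lipschitz-like dependence on $\bar f_i-\bar g_i$. The polynomial and denominator factors are now bounded in terms of $\|\bar f_i\|,\|\bar g_i\|$ and $\delta$, absorbed into $C$. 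The main obstacle is pure bookkeeping: one must verify that the Hölder norm of the reciprocal $1/(|\pa_x\cdot|+|\pa_x\cdot|)$, and more generally of $1/|\pa_x\cdot|^m$, is controlled \emph{linearly} in the Hölder norms of the underlying tangent vectors (so that the cubic power in \eqref{eq:hol1} is indeed enough) and that no cross-term destroys the $T^{\alpha/4}$ scaling in the remaining three estimates.
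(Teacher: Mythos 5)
Your proposal is correct and follows essentially the same route as the paper: the polarization/quotient identity for $|v|-|w|$ (which is exactly the content of Lemma~\ref{lem:HolderChaos}), the factorization of Remark~\ref{rem:algebra}, the algebra property of the H\"older norms, and the vanishing of $\bar f_i-f_{0,i}$ (resp.\ $\bar f_i-\bar g_i$) at $t=0$ combined with Lemma~\ref{lem:propHolder} to extract the factor $T^{\alpha/4}$. The only cosmetic difference is that for \eqref{eq:hol1} you invoke Lemma~\ref{lem:delta} and $T\le T_1$ to bound the denominator, whereas the paper simply uses $|\pa_x f_{0,i}|+|\pa_x\bar f_i|\ge|\pa_x f_{0,i}|\ge\delta$, which is why that estimate is stated for all $T<1$.
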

\begin{proof}  The first inequality is a direct consequence of Lemma \ref{lem:HolderChaos}, Remark \ref{rem:Holder}, \eqref{eq:stellina} and the definition of $\delta$ in \eqref{eq:defdelta}. 
Indeed, 
\begin{align*}
\left\| |\pa_x f_{0,i} |- |\pa_x \bar{f}_i| \right \|_{C^{\frac{\alpha}{4},\alpha}} &\leq C(n)  \left\| \frac{1}{|\pa_x f_{0,i}|} \right \|_{C^{0}([0,1])}^2 \Big(\|\bar{f}_i \|_{C^{\frac{4+\alpha}{4},4+\alpha}} + \|f_{0,i} \|_{C^{4,\alpha}([0,1])}\Big)^2 \\
& \quad \times \| \pa_x f_{0,i} - \pa_x \bar{f}_i\|_{C^{\frac{\alpha}{4},\alpha}}\\
& \leq  C(n) T^{\frac{\alpha}{4}} \left\| \frac{1}{|\pa_x f_{0,i}|} \right \|_{C^{0}([0,1])}^2 \Big(\|\bar{f}_i \|_{C^{\frac{4+\alpha}{4},4+\alpha}} + \|f_{0,i} \|_{C^{4,\alpha}([0,1])}\Big)^3 \, .
\end{align*}
Next, let us denote with $p_k(\cdots)$ a  polynomial of degree at most $k$ in its given variables. For the second and fourth inequalities using Remark \ref{rem:algebra}, Lemmas \ref{lem:delta}, \ref{lem:Holder}, \ref{lem:HolderChaos} and \eqref{eq:hol1} we find for $T \leq T_1$
\begin{align*}
& \Big\| \frac{1}{|\pa_x f_{0,i} |^m}- \frac{1}{|\pa_x \bar{f}_i|^m} \Big\|_{C^{\frac{\alpha}{4},\alpha}([0,T]\times [0,1])}
\\
& \qquad \leq  \| |\pa_x f_{0,i} |- |\pa_x \bar{f}_i|\|_{C^{\frac{\alpha}{4},\alpha}}
\Big\| \frac{p_{m-1}(|\pa_x f_{0,i} |,|\pa_x \bar{f}_i|)}{|\pa_x f_{0,i} |^m |\pa_x \bar{f}_i|^m} \Big\|_{C^{\frac{\alpha}{4},\alpha}} \\
& \qquad \leq  C T^{\frac{\alpha}{4}} \Big(\|\bar{f}_i \|_{C^{\frac{4+\alpha}{4},4+\alpha}} + \|f_{0,i} \|_{C^{4,\alpha}([0,1])}\Big)^3 
p_{3m-1}(\|\pa_x f_{0,i} \|_{C^{0,\alpha}([0,1])},\|\pa_x \bar{f}_i\|_{C^{\frac{\alpha}{4},\alpha}})\\
& \qquad \leq  C T^{\frac{\alpha}{4}} p_{3m+2}(\|f_{0,i} \|_{C^{4,\alpha}([0,1])},\|\bar{f}_i\|_{C^{\frac{4+\alpha}{4},4+\alpha}}) \, ,
\end{align*}
with $C=C(n,m, \delta)$. 
With the same ideas
\begin{align*}
& \Big\| \frac{1}{|\pa_x \bar{f}_i |^m}- \frac{1}{|\pa_x \bar{g}_i|^m} \Big\|_{C^{\frac{\alpha}{4},\alpha}([0,T]\times [0,1])}
\\
& \qquad \leq C  p_{3m-1}(\| \bar{f}_i \|_{C^{\frac{4+\alpha}{4},4+\alpha}} , \|\bar{g}_i\|_{C^{\frac{4+\alpha}{4},4+\alpha}} ) \| |\pa_x \bar{f}_i| - |\pa_x \bar{g}_i| \|_{C^{\frac{\alpha}{4},\alpha}} \\
& \qquad \leq C p_{3m+1}(\| \bar{f}_i \|_{C^{\frac{4+\alpha}{4},4+\alpha}} , \|\bar{g}_i\|_{C^{\frac{4+\alpha}{4},4+\alpha}} )  \| \pa_x \bar{f}_i - \pa_x \bar{g}_i \|_{C^{\frac{\alpha}{4},\alpha}} \\
& \qquad \leq C T^{\frac{\alpha}{4}} p_{3m+1}(\| \bar{f}_i \|_{C^{\frac{4+\alpha}{4},4+\alpha}} , \|\bar{g}_i\|_{C^{\frac{4+\alpha}{4},4+\alpha}} )  \| \bar{f}_i - \bar{g}_i \|_{C^{\frac{4+\alpha}{4},4+\alpha}}  \, .
\end{align*}
The statement for $x$ fixed are obtained similarly using also \eqref{eq:estHolbdy}. For instance, with Remarks \ref{rem:algebra} and \ref{rem:Holder}, Lemmas \ref{lem:Holder}, \ref{lem:HolderChaos} , \ref{lem:delta}
, \ref{lem:propHolder} and \eqref{eq:estHolbdy} we obtain
 \allowdisplaybreaks{\begin{align*}
 \Big\|& \frac{1}{|\pa_x f_{0,i} (\cdot, x)|^m}- \frac{1}{|\pa_x \bar{f}_i (\cdot, x)|^m}\Big\|_{C^{0,\frac{1+\alpha}{4}}([0,T])}  \\
& \leq 
\Big\| |\pa_x f_{0,i} (\cdot, x)|- |\pa_x \bar{f}_i (\cdot, x)|\Big\|_{C^{0,\frac{1+\alpha}{4}}([0,T])}
\Big\| \frac{p_{m-1}(|\pa_x f_{0,i} (\cdot, x)|,|\pa_x \bar{f}_i (\cdot, x)|)}{|\pa_x f_{0,i} (\cdot, x)|^m|\pa_x \bar{f}_i (\cdot, x)|^m}\Big\|_{C^{0,\frac{1+\alpha}{4}}([0,T])} \\
& \leq C \Big\| \frac{1}{|\pa_x f_{0,i} (\cdot, x)|}\Big\|^2_{C^{0}([0,T])} (\| \pa_x f_{0,i} (\cdot, x)\|_{C^{0,\frac{1+\alpha}{4}}([0,T])} + \| \pa_x \bar{f}_{i} (\cdot, x)\|_{C^{0,\frac{1+\alpha}{4}}([0,T])})^2 \\
& \qquad \times \| \pa_x f_{0,i} (\cdot, x)- \pa_x \bar{f}_i (\cdot, x) \|_{C^{0,\frac{1+\alpha}{4}}([0,T])}\\
& \qquad \times  p_{3m-1}(\| \pa_x f_{0,i} (\cdot, x)\|_{C^{0,\frac{1+\alpha}{4}}([0,T])},\| \pa_x \bar{f}_{i} (\cdot, x)\|_{C^{0,\frac{1+\alpha}{4}}([0,T])})\\
& \leq C p_{3m+1}(\| \pa_x f_{0,i} (\cdot, x)\|_{C^{0,\frac{1+\alpha}{4}}([0,T])},\| \pa_x \bar{f}_{i} (\cdot, x)\|_{C^{0,\frac{1+\alpha}{4}}([0,T])}) \\
& \qquad \times  \| \pa_x f_{0,i} (\cdot, x)- \pa_x \bar{f}_i (\cdot, x) \|_{C^{\frac{1+\alpha}{4},1+\alpha}([0,T]\times[0,1])} \\
& \leq C T^{\frac{\alpha}{4}} p_{3m+2}(\|f_{0,i}\|_{C^{4,\alpha}([0,1])},\|\bar{f}_{i}\|_{C^{\frac{4+\alpha}{4}, 4 +\alpha}([0,T]\times[0,1])}) \, ,
\end{align*}}
where once again the constant depends on $n,m$ and $\delta$.
\end{proof}

\paragraph{Strict contraction}

Consider the solution operator $\mathcal{R}$ given in \eqref{eq:descrR}.  
Our aim is to show that $\mathcal{R}$ is a (strict) contraction, that is with $f= \mathcal{R}(\bar{f})$, $g= \mathcal{R}(\bar{g})$ the contraction estimate, 
\begin{align}\label{eq:Banach}
\| f-g \|_{C^{\frac{4+\alpha}{4},4+\alpha}([0,T]\times[0,1])} 
\le C_1 T^{\frac{\alpha}{4}} \|\bar f-\bar g \|_{C^{\frac{4+\alpha}{4},4+\alpha}([0,T]\times[0,1])} 
\end{align} 
holds for some constant $C_1=C_1(n,q,\delta,\tilde{\delta},\|f_0\|_{C^{4,\alpha}([0,1])},M)$ and $T \leq T_1$ with $T_1$ defined in Lemma~\ref{lem:delta}. Here $M$ is the constant in \eqref{eq:X_i}.

Observe that $f-g$ fulfills the parabolic linear system, 
\begin{equation}\label{eq:lin_(f-g)}
\pa_t(f_i-g_i)+D_i^4 \pa_x^4 (f_i-g_i)
=R_i(\bar f_i)+h(\bar f_i)-R_i(\bar g_i)-h(\bar g_i),
\end{equation}
for all $(t,x)\in(0,T)\times(0,1)$, $i\in \{1,\dots,q\}$, with $R_i$, $h$ defined in \eqref{eq:R_i} and \eqref{eq:gi} respectively, together with  the initial condition $(f_i-g_i)(t=0)=0$ and the boundary conditions
\begin{equation*}
\left\{\begin{aligned}
(f_i-g_i)(t,1)& =0, &\forall \, t\in[0,T], i\in \{1,\dots,q\}, \\ 
(f_i-g_i)(t,0)& =(f_j-g_j)(t,0), &\forall \, t\in[0,T], i,j\in \{1,\dots,q\}, \\
\pa_x^2 (f_i - g_i)(t,0) & =0 = \pa_x^2 (f_i - g_i)(t,1),  &\forall \, t\in[0,T], i\in \{1,\dots,q\}, \\
\sum_{i=1}^q E_i \pa_x^3 (f_i- g_i)(t,0) & = b(\bar f)-b(\bar g),  &\forall \, t\in[0,T], 
\end{aligned} \right.
\end{equation*}
where the boundary term $b$ is defined in (\ref{eq:vectorb}).  

By the same arguments as in Section \ref{sec:existlin} the linear problem is well posed and the regularity assumptions on the coefficients are satisfied. Moreover, since $\bar{f}=\bar{g}$ at $t=0$ we see that the zero initial datum satisfies the compatibility conditions of order zero and hence $f-g$ is the solution given by \cite[Thm.4.9, page 121]{Sol} or \cite[Thm. VI.21]{EZ} of \eqref{eq:lin_(f-g)} with the boundary conditions given above. Moreover, the same theorems give the following estimate 
\begin{align}\label{eq:Solo_(f-g)}
&\sum_{i=1}^q \| f_i-g_i\|_{C^{\frac{\alpha+4}{4}, \alpha+4}([0,T]\times[0,1])} 
\\ \nonumber
&\leq C_0 \Big( \sum_{i=1}^q (
 \| R_i(\bar f_i)+h(\bar f_i)-R_i(\bar g_i)-h(\bar g_i)\|_{C^{\frac{\alpha}{4}, \alpha}([0,T]\times[0,1])}) + \| b(\bar f)-b(\bar g) \|_{C^{0,\frac{\alpha+1}{4}}([0,T])}\Big) \, ,
\end{align}
where $C_0=C_0(n,q,\delta,\tilde{\delta})$ is the constant in Theorem \ref{teo:existlin}. To obtain inequality \eqref{eq:Banach}, we need to estimate the terms on the right hand side of \eqref{eq:Solo_(f-g)}. 
First of all, for any $i\in \{1,\dots,q\}$, by applying triangle inequality of 
H\"{o}lder norms, Remark \ref{rem:Holder} and Lemmas \ref{lem:Holder}, \ref{lem:hilfsatz} 
we have for $T\leq T_1$ with $T_1$ from Lemma \ref{lem:delta}
\begin{align}\nonumber
&\| R_i(\bar f_i)-R_i(\bar g_i)\|_{C^{\frac{\alpha}{4}, \alpha}([0,T]\times[0,1])} 
\\ \nonumber
& \leq  C
\left\| \frac{1}{|\pa_x f_{0,i}|^4}-\frac{1}{|\pa_x \bar f_{i}|^4} \right\|_{C^{\frac{\alpha}{4}, \alpha}([0,T]\times[0,1])} 
\| \pa_x^4 \bar f_i - \pa_x^4 \bar g_i \|_{C^{\frac{\alpha}{4}, \alpha}([0,T]\times[0,1])} 
\\  \nonumber
&\quad~~ + C\left\| \frac{1}{|\pa_x \bar g_i|^4}-\frac{1}{|\pa_x \bar f_{i}|^4} \right\|_{C^{\frac{\alpha}{4}, \alpha}([0,T]\times[0,1])} 
\| \pa_x^4 \bar g_i \|_{C^{\frac{\alpha}{4}, \alpha}([0,T]\times[0,1])} \\ \label{eq:R(f)-R(g)}
\quad &\leq C T^\frac{\alpha}{4} \| \bar f_i-\bar g_i \|_{C^{\frac{\alpha+4}{4}, \alpha+4}([0,T]\times[0,1])}\, ,
\end{align} 
where $C=C(n,\delta, \|f_{0,i} \|_{C^{4,\alpha}([0,1])},\|\bar{f}_i\|_{C^{\frac{4+\alpha}{4},4+\alpha}},\|\bar{g}_i\|_{C^{\frac{4+\alpha}{4},4+\alpha}})$. 

With the same ideas, since $\bar{f}_i=\bar{g}_i$ at $t=0$, using Lemmas \ref{lem:Holder},  \ref{lem:propHolder}, \ref{lem:hilfsatz}, \ref{lem:delta} we have for $T\leq T_1$
\begin{equation}\label{eq:h(f)-h(g)}
\begin{aligned}
&\| h(\bar f_i)-h(\bar g_i)\|_{C^{\frac{\alpha}{4}, \alpha}([0,T]\times[0,1])} 
\\
&\leq C \left( \sum_{k=1}^3 \| \pa_x^k \bar f_i- \pa_x^k \bar g_i\|_{C^{\frac{\alpha}{4}, \alpha}([0,T]\times[0,1])}
+ \sum_{k=1}^4 \left \| \frac{1}{|\pa_x \bar{g}_{i}|^{2k}}-\frac{1}{|\pa_x \bar f_{i}|^{2k}} \right \|_{C^{\frac{\alpha}{4}, \alpha}([0,T]\times[0,1])} 
\right)\\
& \leq C T^{\frac{\alpha}{4}} \| \bar f_i-\bar g_i \|_{C^{\frac{\alpha+4}{4}, \alpha+4}([0,T]\times[0,1])}
\end{aligned} 
\end{equation}
where $C=C(n,\delta, \lambda_{i}, \|\bar{f}_i\|_{C^{\frac{4+\alpha}{4},4+\alpha}},\|\bar{g}_i\|_{C^{\frac{4+\alpha}{4},4+\alpha}})$. 

For the estimates of the boundary terms, 
$\| b(\bar f)-b(\bar g) \|_{C^{0,\frac{\alpha+1}{4}}([0,T])}$, 
we start from the term multiplied by $\lambda_{i}$, see \eqref{eq:vectorb}. 
With help from Lemmas \ref{lem:Holder}, \ref{lem:delta}, \ref{lem:HolderChaos}, \ref{lem:hilfsatz} and Remark \ref{rem:Holder}, we have  
\begin{equation}\label{eq:(b(f)-b(g))-1}
\begin{aligned}
& \Big\| \frac{\pa_x \bar f_i}{|\pa_x \bar f_i|}- \frac{\pa_x \bar g_i}{|\pa_x \bar g_i|} \Big\|_{C^{0,\frac{\alpha+1}{4}}([0,T])}
\\
&\leq C
\Big\| \frac{1}{|\pa_x \bar f_i|}\Big\|_{C^{0,\frac{\alpha+1}{4}}([0,T])} 
\|\pa_x\bar f_i-\pa_x \bar g_i \|_{C^{0,\frac{\alpha+1}{4}}([0,T])}
\\
& \qquad + C\Big\| \frac{1}{|\pa_x \bar f_i|}- \frac{1}{|\pa_x \bar g_i|}\Big\|_{C^{0,\frac{\alpha+1}{4}}([0,T])}
\|\pa_x\bar g_i \|_{C^{0,\frac{\alpha+1}{4}}([0,T])} \\ 
& \leq C T^{\frac{\alpha}{4}} \| \bar f_i-\bar g_i \|_{C^{\frac{\alpha+4}{4}, \alpha+4}([0,T]\times[0,1])} 
\, ,
\end{aligned} 
\end{equation}
where $C=C(n,\delta, \|\bar{f}_i\|_{C^{\frac{4+\alpha}{4},4+\alpha}},\|\bar{g}_i\|_{C^{\frac{4+\alpha}{4},4+\alpha}})$.

For the highest-order terms at the boundary we compute 
\begin{equation}\label{eq:(b(f)-b(g))-2}
\begin{aligned}
& \| \sum_{i=1}^q (E_i- \bar{E_i}(\bar f_i)) \pa_x^3 \bar{f}_i
- (E_i- \bar{E_i}(\bar g_i)) \pa_x^3 \bar{g}_i\|_{C^{0,\frac{\alpha+1}{4}}([0,T])}
\\
&\leq 
\sum_{i=1}^q 
\| (E_i- \bar{E_i}(\bar f_i)) (\pa_x^3 \bar{f}_i - \pa_x^3 \bar{g}_i)\|_{C^{0,\frac{\alpha+1}{4}}([0,T])}
+ \sum_{i=1}^q 
\| (\bar{E_i}(\bar f_i)- \bar{E_i}(\bar g_i)) \pa_x^3 \bar{g}_i\|_{C^{0,\frac{\alpha+1}{4}}([0,T])}\,.
\end{aligned} 
\end{equation}
Note that we may split the matrix term as 
\begin{equation}\label{eq:E_0-E_1}
\begin{aligned}
&E_i- \bar{E_i}(\bar f_i) 
\\
&= \left(\frac{1}{|\pa_x f_{0,i}|^3} - \frac{1}{|\pa_x \bar{f}_i|^3}\right) Id_{n \times n}
-\left( \frac{1}{|\pa_x f_{0,i}|^3} d_i \otimes d_i
-\frac{1}{|\pa_x \bar f_i|^3}\pa_s \bar{f}_i \otimes \pa_s \bar{f}_i \right)
\\ 
&= \left(\frac{1}{|\pa_x f_{0,i}|^3} - \frac{1}{|\pa_x \bar{f}_i|^3}\right) Id_{n \times n}
-\left( \frac{1}{|\pa_x f_{0,i}|^5}-\frac{1}{|\pa_x \bar f_i|^5}\right) \pa_x f_{0,i} \otimes \pa_x f_{0,i} \\ 
&~~~~~~+ \frac{1}{|\pa_x \bar f_i|^5} 
\pa_x f_{0,i} \otimes (\pa_x f_{0,i}  - \pa_x \bar f_i )
+ \frac{1}{|\pa_x \bar f_i|^5} 
( \pa_x f_{0,i} -\pa_x \bar f_i ) \otimes \pa_x \bar f_i \,.
\end{aligned} 
\end{equation} 
By applying the linear algebra, $(\vec u\otimes \vec v)\vec w=\vec u \langle \vec v, \vec w\rangle$, 
from \eqref{eq:E_0-E_1} we have using Lemmas~\ref{lem:Holder}, \ref{lem:delta}, \ref{lem:hilfsatz}, \ref{lem:propHolder} and Remark \ref{rem:Holder} (for simplicity  here we mostly write $C^{0,\frac{\alpha+1}{4}}$ instead of $C^{0,\frac{\alpha+1}{4}}([0,T])$)
\allowdisplaybreaks{
\begin{align*}
& \| (E_i- \bar{E_i}(\bar f_i)) \cdot (\pa_x^3 \bar{f}_i - \pa_x^3 \bar{g}_i)\|_{C^{0,\frac{\alpha+1}{4}}([0,T])}
\\
&\leq C \left\| \frac{1}{|\pa_x f_{0,i}|^3} - \frac{1}{|\pa_x \bar{f}_i|^3} \right \|_{C^{0,\frac{\alpha+1}{4}}} 
\| \pa_x^3 \bar{f}_i - \pa_x^3 \bar{g}_i \|_{C^{0,\frac{\alpha+1}{4}}} 
\\ 
&+ C \left \| \frac{1}{|\pa_x f_{0,i}|^5} - \frac{1}{|\pa_x \bar{f}_i|^5} \right \|_{C^{0,\frac{\alpha+1}{4}}} 
\| \pa_x f_{0,i}\|^2_{C^{0,\frac{\alpha+1}{4}}} 
\| \pa_x^3 \bar{f}_i - \pa_x^3 \bar{g}_i \|_{C^{0,\frac{\alpha+1}{4}}} 
\\ 
&+ C \left \| \frac{1}{|\pa_x \bar{f}_i|^5}  \right \|_{C^{0,\frac{\alpha+1}{4}}} 
\| \pa_x f_{0,i}\|_{C^{0,\frac{\alpha+1}{4}}} 
\| \pa_x f_{0,i} - \pa_x \bar{f}_i \|_{C^{0,\frac{\alpha+1}{4}}} 
\| \pa_x^3 \bar{f}_i - \pa_x^3 \bar{g}_i \|_{C^{0,\frac{\alpha+1}{4}}} 
\\ 
&+ C \left \| \frac{1}{|\pa_x \bar{f}_i|^5} \right  \|_{C^{0,\frac{\alpha+1}{4}}} 
\| \pa_x \bar f_i\|_{C^{0,\frac{\alpha+1}{4}}} 
\| \pa_x f_{0,i} - \pa_x \bar{f}_i \|_{C^{0,\frac{\alpha+1}{4}}} 
\| \pa_x^3 \bar{f}_i - \pa_x^3 \bar{g}_i \|_{C^{0,\frac{\alpha+1}{4}}} 
\\
& \leq C T^{\frac{\alpha}{4}}
\| \bar f_i-\bar g_i \|_{C^{\frac{4+\alpha}{4}, 4+\alpha}([0,T]\times[0,1])} 
\, ,
\end{align*}}  
with $C=C(n, \delta, \|\bar f\|_{C^{\frac{4+\alpha}{4}, 4+\alpha}}, \| f_{0}\|_{C^{4, \alpha} ([0,1])}, 
\|\bar g\|_{C^{\frac{4+\alpha}{4}, 4+\alpha}})$.

Similarly, we may apply the same trick of estimates to the second term in \eqref{eq:(b(f)-b(g))-2}. 
More precisely, writing
\begin{align*}
(\bar{E_i}(\bar{f}_i)- \bar{E_i}(\bar{g}_i)) \pa_x^3 \bar{g}_i & = \Big(\frac{1}{|\pa_x \bar{f}_i|^3}-\frac{1}{|\pa_x \bar{g}_i|^3} \Big)  \pa_x^3 \bar{g}_i + \Big(\frac{1}{|\pa_x \bar{f}_i|^5}-\frac{1}{|\pa_x \bar{g}_i|^5} \Big) \pa_x \bar{f}_i \otimes \pa_x \bar{f}_i  \pa_x^3 \bar{g}_i \\
& \quad + \frac{1}{|\pa_x \bar{g}_i|^5}  \left( (\pa_x \bar{f}_i - \pa_x \bar{g}_i )\otimes \pa_x \bar{f}_i  +   \pa_x \bar{g}_i \otimes ( \pa_x \bar{f}_i - \pa_x \bar{g}_i) \right)  \pa_x^3 \bar{g}_i
\end{align*}
using Remark \ref{rem:Holder}, Lemmas \ref{lem:delta}, \ref{lem:hilfsatz}, \ref{lem:Holder} and \eqref{eq:estHolbdy} we obtain 
\begin{equation*}
\|(\bar{E_i}(\bar f_i)- \bar{E_i}(\bar g_i)) \pa_x^3 \bar{g}_i \|_{C^{0,\frac{\alpha+1}{4}}([0,T])}
\leq 
C T^{\frac{\alpha}{4}} \| \bar f_i-\bar g_i \|_{C^{\frac{4+\alpha}{4}, 4+\alpha}([0,T]\times[0,1])} 
\,,
\end{equation*}
with $C= C(n,\delta, \|\bar{f}_i\|_{C^{\frac{4+\alpha}{4}, 4+\alpha}},\|\bar{g}_i\|_{C^{\frac{4+\alpha}{4}, 4+\alpha}})$. 
Combining the previous estimates we therefore infer 
\begin{align}\label{estbcont}
\| b(\bar f)-b(\bar g) \|_{C^{0,\frac{\alpha+1}{4}}([0,T])} \leq C T^{\frac{\alpha}{4}} 
\sum_{i=1}^{q}\| \bar f_i-\bar g_i \|_{C^{\frac{4+\alpha}{4}, 4+\alpha}([0,T]\times[0,1])}
\end{align}
with $C=C(\lambda,n, \delta, \|\bar f\|_{C^{\frac{4+\alpha}{4}, 4+\alpha}([0,T]\times[0,1])}, \| f_{0}\|_{C^{4, \alpha} ([0,1])}, 
\|\bar g\|_{C^{\frac{4+\alpha}{4}, 4+\alpha}([0,T]\times[0,1])})$.

From \eqref{eq:Solo_(f-g)} together with \eqref{estbcont}, \eqref{eq:h(f)-h(g)} and \eqref{eq:R(f)-R(g)}, we obtain \eqref{eq:Banach} since $\|\bar{f}_i\|_{C^{\frac{4+\alpha}{4}, 4+\alpha}}$ and $\|\bar{g_i}\|_{C^{\frac{4+\alpha}{4}, 4+\alpha}}$ are bounded by $M$ by definition of $X_i$.

\paragraph{Self-map} 
We show now that $\mathcal{R}$ defined in (\ref{eq:descrR}) indeed maps $\prod_{i=1}^q X_i$ into itself by choosing first $M$ and then $T$ sufficiently small. Given $\bar{f} \in \prod_{i=1}^q X_i$ by Theorem \ref{teo:existlin} the solution $f$ of \eqref{eq:systemlinear}, \eqref{eq:linbc} satisfies estimate \eqref{eq:estSol1} so that for each $j\in \{1,\dots,q\}$,  
\begin{align*}
& 
\| f_j\|_{C^{\frac{4+\alpha}{4},{4+\alpha}}([0,T]\times [0,1])} 
\le 
\sum_{i=1}^q \| f_i\|_{C^{\frac{4+\alpha}{4},{4+\alpha}}([0,T]\times [0,1])} 
\\ \nonumber
& \leq C_0 \Big( \sum_{i=1}^q ( \| R_i+h(\bar{f}_i)\|_{C^{\frac{\alpha}{4},\alpha}([0,T]\times [0,1])} + \|f_{0,i}\|_{C^{4,\alpha}([0,1])} 
+ |P_i|)  + \| b\|_{C^{0,\frac{1+\alpha}{4}}([0,T])} \Big) \, .
\end{align*}
with $C_0=C_0(n,q,\delta, \tilde{\delta})$. 
It then follows from 
applying triangle inequalities of H\"{o}lder-norms and \eqref{eq:vectorb} that 
for each $j\in \{1,\dots,q\}$,
\begin{align}\label{eq:estSol2}
& 
\| f_j\|_{C^{\frac{4+\alpha}{4},{4+\alpha}}([0,T]\times [0,1])} \\ \nonumber 
& \leq C_0  \sum_{i=1}^q \Big( \| R_i\|_{C^{\frac{\alpha}{4},\alpha}([0,T]\times [0,1])} + \| h(\bar{f}_i) - h(f_{0,i})\|_{C^{\frac{\alpha}{4},\alpha}([0,T]\times [0,1])}   \\ \nonumber
& \qquad + \|  (E_i-\bar{E}_i) \pa_x^3 \bar{f}_i\|_{C^{0,\frac{1+\alpha}{4}}([0,T])}
+ |\lambda |  \Big\| \frac{\pa_x \bar{f}_i}{|\pa_x \bar{f}_i|} -\frac{\pa_x f_{0,i}}{|\pa_x f_{0,i}|} \Big\|_{C^{0,\frac{1+\alpha}{4}}([0,T])} \Big) \\ \nonumber
& \qquad + 
C_0  \sum_{i=1}^q  \Big(\|h(f_{0,i})\|_{C^{0,\alpha}([0,1])} + \|f_{0,i}\|_{C^{4,\alpha}([0,1])} + |P_i|  + n|\lambda| \Big) \, .
\end{align}

The last term on the right hand side of \eqref{eq:estSol2} depends only on the initial data and $P_i,\lambda$, and will dictate the choice of the constant $M$. From the other terms on the right hand side we are able to gain a power of $T$ and hence to bound them choosing $T$ sufficiently small. Indeed, from \eqref{eq:R_i} using Lemmas \ref{lem:Holder}, \ref{lem:hilfsatz} and Remark \ref{rem:Holder} we find for $T \leq T_1$
\begin{align}\nonumber
\| R_i\|_{C^{\frac{\alpha}{4},\alpha}([0,T]\times [0,1])} 
& \leq C \Big\| \frac{1}{|\partial_x f_{0,i}|^4}-\frac{1}{|\partial_x \bar{f}_{i}|^4}\Big\|_{C^{\frac{\alpha}{4},\alpha}([0,T]\times [0,1])}  \|\pa_x^4 \bar{f}_{i}\|_{C^{\frac{\alpha}{4},\alpha}([0,T]\times [0,1])} \\ \label{eq:sm1}
& \leq C T^{\frac{\alpha}{4}} \, ,
\end{align}
with $C=C(n,\delta, \|\bar{f}_{i}\|_{C^{\frac{4+\alpha}{4},4+\alpha}}, \|f_{0,i}\|_{C^{4,\alpha}([0,1])})$. 
Furthermore, from \eqref{eq:gi} and using Lemmas \ref{lem:Holder}, \ref{lem:propHolder}, \ref{lem:delta}, \ref{lem:hilfsatz}, Remark \ref{rem:Holder} again for $T \leq T_1$, we find 
\begin{align}\nonumber
&  \| h(\bar{f}_i) - h(f_{0,i})\|_{C^{\frac{\alpha}{4},\alpha}([0,T]\times [0,1])} \\ \nonumber
& \leq C \sum_{k=1}^3 \| \pa_x^k \bar{f}_i - \pa_x^k f_{0,i}\|_{C^{\frac{\alpha}{4},\alpha}([0,T]\times [0,1])} 
+ C \sum_{k=1}^4  \left\| \frac{1}{|\partial_x f_{0,i}|^{2k}}-\frac{1}{|\partial_x \bar{f}_{i}|^{2k}}\right\|_{C^{\frac{\alpha}{4},\alpha}([0,T]\times [0,1])} \\ \label{eq:sm2}
& \leq C T^{\frac{\alpha}{4}} \, ,
\end{align}
with $C=C(n,\delta, \|\bar{f}_{i}\|_{C^{\frac{4+\alpha}{4},4+\alpha}}, \|f_{0,i}\|_{C^{4,\alpha}([0,1])})$. 
Similarly, for the boundary terms with Lemmas \ref{lem:Holder}, \ref{lem:delta}, \ref{lem:HolderChaos},  \ref{lem:propHolder}, \ref{lem:hilfsatz}, Remark \ref{rem:Holder} and \eqref{eq:estHolbdy} when $T \leq T_1$ we find 
\begin{align}\nonumber
\Big\| \frac{\pa_x \bar{f}_i}{|\pa_x \bar{f}_i|} -\frac{\pa_x f_{0,i}}{|\pa_x f_{0,i}|} \Big\|_{C^{0,\frac{1+\alpha}{4}}([0,T])} 
& \leq C \Big\| \frac{1}{|\pa_x \bar{f}_i|} -\frac{1}{|\pa_x f_{0,i}|} \Big\|_{C^{0,\frac{1+\alpha}{4}}([0,T])} \Big\| \pa_x \bar{f}_i \Big\|_{C^{0,\frac{1+\alpha}{4}}([0,T])} \\ \nonumber
& \quad  +  C \Big\| \frac{1}{|\pa_x f_{0,i}|} \Big\|_{C^{0,\frac{1+\alpha}{4}}([0,T])} \Big\| \pa_x \bar{f}_i - \pa_x f_{0,i} \Big\|_{C^{0,\frac{1+\alpha}{4}}([0,T])}  \\  \label{eq:sm3}
& \leq C T^{\frac{\alpha}{4}} \, ,
\end{align}
and finally by \eqref{eq:E_0-E_1}
\begin{equation} 
\label{eq:sm4}
\|  (E_i-\bar{E}_i) \pa_x^3 \bar{f}_i\|_{C^{0,\frac{1+\alpha}{4}}([0,T])} \leq C T^{\frac{\alpha}{4}},
\end{equation}
with 
$C=C(n,\delta, \|\bar{f}_{i}\|_{C^{\frac{4+\alpha}{4},4+\alpha}}, \|f_{0,i}\|_{C^{4,\alpha}([0,1])})$. 

From \eqref{eq:estSol2} together with \eqref{eq:sm1}, \eqref{eq:sm2}, \eqref{eq:sm3} and \eqref{eq:sm4} we obtain
\begin{align}\label{eq:estSol3}
& 
\| f_j\|_{C^{\frac{4+\alpha}{4},{4+\alpha}}([0,T]\times [0,1])} 
\\  \nonumber
& \leq C_2 T^{\frac{\alpha}{4}}+ 
C_0 \sum_{i=1}^q   \Big(\|h(f_{0,i})\|_{C^{0,\alpha}([0,1])} + \|f_{0,i}\|_{C^{4,\alpha}([0,1])} + |P_i|   + n|\lambda| \Big) \, ,
\end{align}
with 
$C_2=C_2(n,q,\delta, \tilde{\delta},\|f_{0}\|_{C^{4,\alpha}([0,1])},M)$ since $\|\bar{f}_i\|_{C^{\frac{4+\alpha}{4}, 4+\alpha}} \leq M$ by definition of $X_i$ for each $i \in \{1,2,\ldots, q\}$.

\paragraph{Proof of Theorem \ref{teo:STEpde}}

\begin{proof}[Proof of Theorem \ref{teo:STEpde}] 

\underline{Existence of a solution} We start by fixing $M$ and $T$. 
Let 
\begin{align}\label{defM}
 M:= 2  
 C_0  \sum_{i=1}^q  \Big( \|h(f_{0,i})\|_{C^{0,\alpha}([0,1])} + \|f_{0,i}\|_{C^{4,\alpha}([0,1])} + |P_i|   + n|\lambda| \Big) 
 \end{align}
see the last term in \eqref{eq:estSol3}. 
Now fix $T \leq \min\{T_1,1\}$, with $T_1$ defined in Lemma \ref{lem:delta}, such that 
$$C_1 T^{\frac{\alpha}{4}} < 1 \mbox{ and }C_2 T^{\frac{\alpha}{4}} < \frac{M}{2}\, ,$$
with $C_1,C_2$ the constants depending on $n$, $q$, $\delta$, $\tilde{\delta}$, $\|f_{0}\|_{C^{4,\alpha}[0,1]}$ and $ M$ appearing in \eqref{eq:Banach} and \eqref{eq:estSol3} respectively. 

Since the $\prod_{i=1}^q X_i$ is a closed set of the Banach space $C^{\frac{4+\alpha}{4}, 4+\alpha} \big([0,T]\times [0,1]; \R^{qn}\big)$ and, by the choice of $M$ and $T$, the map $\mathcal{R}$ is a self-map and a strict contraction, 
by applying Banach's fixed point theorem we get a unique fixed point of $\mathcal{R}$ and hence, by construction, a solution to \eqref{(P)} with \eqref{eq:nonlinearbc} and \eqref{eq:ic} in $C^{\frac{4+\alpha}{4}, 4+\alpha} \big([0,T]\times [0,1]; \R^{qn}\big)$. Moreover, $f_i$ is a regular curve for each $i\in \{1, \dots,q\}$ and $t \in [0,T]$ by Lemma \ref{lem:delta}.

\smallskip
\underline{Non-collinearity} 
 Consider the function
$$ \mbox{nc}: [0,T] \times [0,1] \to \R, \qquad \mbox{nc}(t,x)= 1- \prod_{1\leq i<j\leq q} \Big|\langle \pa_s f_{i},\pa_s f_{j} \rangle \Big|
\, , $$
for $f=(f_{1},\dots,f_{q})$ the solution provided so far.
The non-collinearity condition (NC) yields that at time $t=0$ and at $x=0$ this function is strictly positive. Moreover, the regularity and smoothness of the solution ensures that $\mbox{nc} \in C^{0}([0,T] \times [0,1])$ (see Remark \ref{rem:Holder}) and hence, by possibly choosing $T$ smaller, $\mbox{nc}>0$ on $[0,T] \times \{0\}$. In other words,
 the non-collinearity condition remains satisfied on $[0,T]$.

\smallskip
\underline{Uniqueness of the solution} Let $f,\tilde{f}$ be two different solutions on $[0,T] \times [0,1]$ of  \eqref{(P)} with \eqref{eq:nonlinearbc} and \eqref{eq:ic}.  Let 
$$ \bar{t}=\sup\{t\in [0,T]\, | \, f(\tau, x)=\tilde{f}(\tau, x) \quad \forall \, x \in [0,1] \text{ and } \forall \, \tau \leq t \}.$$
Obviously $0 \leq \bar{t} <T $. Now consider the problem \eqref{(P)}, \eqref{eq:nonlinearbc}, with initial data $f(\bar{t},\cdot)=\tilde{f}(\bar{t}, \cdot)$.  Note that for this new initial data (NC) is satisfied as well as all necessary compatibility conditions. By repeating the arguments provided so far, this problem has a unique solution on some small time interval $[\bar{t}, \bar{t} +\epsilon]$. Since $f$ and $\tilde{f}$ are also solutions, this yields that $f=\tilde{f}$ for some time after $\bar{t}$ giving a contradiction to the definition of $\bar{t}$.

\smallskip
\underline{Parabolic-Smoothing for positive time} The smoothness of the solution in $(0,T]\times[0,1]$ follows with similar arguments as presented in \cite[App.B.2.3]{DS}. For completeness we report here the main ideas. Given $0<\epsilon< T $, we consider the network $\gamma:=f\eta= (f_{1}\eta, \dots, f_{q}\eta)$ where $\eta:[0, T] \to [0,1]$ is some smooth  cut-off function with $\eta(t)=0$ for $  0 \leq t < \epsilon/4$ and $\eta(t) =1$ for $t \in [\epsilon/2 , T]$. By the regularity of $f$ it follows that $\gamma \in C^{\frac{4+\alpha}{4},4+\alpha}([0,T] \times [0,1])$. Moreover, upon recalling \eqref{(P)}, \eqref{eq:nonlinearbc}, \eqref{eq:ic}, and  \eqref{jay}, we infer that $\gamma$ satisfies the linear parabolic boundary value problem
\begin{align}\label{eq:systemgamma}
\partial_{t } \gamma_{i} =-\frac{1}{|\partial_{x} f_{i}|^{4}} \partial_{x}^{4}\gamma_{i} + \eta h(f_{i}) + f_{i} \frac{d}{dt} \eta \qquad i\in \{1,\dots,q\},
\end{align}
together with boundary conditions 
\begin{equation}\label{eq:nonlinearbcgamma}
\left\{\begin{aligned}
\gamma_i(t,1)& =\eta(t) P_i, &\mbox{ for all }t\in (0,T), i\in \{1,\dots,q\}, \\
\pa_x^2 \gamma_i(t,1) & =0=\pa_x^2 \gamma_i(t,0)  &\mbox{ for all }t\in (0,T), \ i\in \{1,\dots,q\},\\
\gamma_i(t,0) & =\gamma_j(t,0) &\mbox{ for all }t\in (0,T), i,j\in \{1,\dots,q\},\\
 \sum_{i=1}^q \big( E_{i}(f_{i}) &\partial_{x}^{3} \gamma_{i}(t,0) - \lambda_i \frac{1}{|\partial_{x} f_{i}|}\partial_x \gamma_{i}(t,0) \big) = 0  &\mbox{ for all }t\in (0,T),
\end{aligned} \right.
\end{equation}
with $E_{i}(f_{i})=\frac{1}{|\partial_{x}f_{i}|^{3}}(Id -  \partial_{s} f_{i} \otimes  \partial_{s} f_{i} ) $
 and initial datum $\gamma_{0, i} =0$, $i\in \{1,\dots,q\}$. Note that the system is linear and parabolic by regularity of $f_i$, $i\in \{1,\dots,q\}$. The compatibility conditions of any order are satisfied (thanks to $\gamma$ being identically zero close to the origin) and the complementary conditions are also satisfied (this is done in a similar way as in the previous section and exploiting the fact that (NC) holds for all times in $[0,T]$). The coefficients of the elliptic and boundary operators  belong to $C^{\frac{3+\alpha}{4}, 3+\alpha}([0,T] \times [0,1])$ resp. to $C^{0, \frac{3+\alpha}{4}} ([0,T])$ whereas the inhomogeneity in \eqref{eq:systemgamma} is in $C^{\frac{1+\alpha}{4}, 1+\alpha}([0,T] \times [0,1])$. Application of \cite[Thm.4.9, page 121]{Sol}  yields $\gamma \in  C^{\frac{5+\alpha}{4}, 5+\alpha}([0,T] \times [0,1])$ and therefore $f \in  C^{\frac{5+\alpha}{4}, 5+\alpha}([\epsilon/2,T] \times [0,1])$. To apply a bootstrapping argument we now repeat the same procedure, but since the higher regularity of $f$ is guaranteed only  for $t \geq \epsilon/2$  the next cutting function must be zero, say on $[0, \frac{2}{3} \epsilon]$ and equal one on $[\frac{3}{4}\epsilon, T]$, i.e. we have to ``shift and reduce'' progressively the interval where $\eta \in (0,1)$.  More details  in this respect can be found in \cite[App.B.2.3]{DS}. Eventually we attain $f \in C^{\infty} ([\epsilon, T])$ and since $\epsilon$ was arbitrarily chosen the claim follows.
\end{proof}

\subsection{The case of a smooth initial datum}

If the initial data $f_{0,i}$, $i\in \{1,\dots,q\}$, are in $C^{k,\alpha}([0,1])$, $k\geq 4$, and higher order compatibility conditions are satified we get a solution with higher regularity. Let us first state the compatibility conditions of general order (see Remark \ref{compaorder1} for compatibility conditions of order zero).

\begin{rem}\label{rem2.1}
(\textbf{Compatibility conditions analytical problem})
Following \cite[page~98]{Sol} and \cite[page 217, Example 6.12]{EZ}, 
for the problem \eqref{(P)}, \eqref{eq:nonlinearbc}, with initial datum $f_{0}$, 
we say that compatibility conditions of order $\mu \in \mathbb{N} \cup \{ 0 \}$
are satisfied if the following hold:
\begin{itemize}
\item we have
\begin{align*}
 f_{0,i}(1) =P_i, \; \text{ and }\;\,\,  f_{0,i}(0)- f_{0,j}(0)=0, \; \quad \ i,j \in \{1,\dots,q\},
\end{align*}
\item for any $i_{q} \in  \mathbb{N}$ such that $4 i_{q} -4 \leq \mu$ we have
\begin{align*}
\partial^{i_{q}}_{t} f_{i} \Big|_{(t,x)=(0,1)}=0, \; \; \text{ and }\;  (\partial_{t}^{i_{q}} f_{i}-\partial_{t}^{i_{q}} f_{j})\Big|_{(t,x)=(0,0)}=0 , \;  i,j \in \{1,\dots,q\},
\end{align*}
\item for any $i_{q} \in  \mathbb{N} \cup \{ 0 \}$ such that $4 i_{q} -2 \leq \mu$ we have
\begin{align*}
\partial^{i_{q}}_{t} \left( 
\partial^{2}_{x} f_{i}\right) \Big|_{(t,x)=(0,1)}=0, \; \text{ and }\;  \partial^{i_{q}}_{t} \left( 
\partial^{2}_{x} f_{i} \right)\Big|_{(t,x)=(0,0)}=0, \; i\in \{1,\dots,q\},
\end{align*}
\item for any $i_{q} \in  \mathbb{N} \cup \{ 0 \}$ such that $4 i_{q} -1 \leq \mu$ we have
\begin{align*}
\partial^{i_{q}}_{t} \left(\sum_{i=1}^q (\nabla_s \vec{\kappa}_{i}(t,0) - \lambda_i \partial_s f_{i}(t,0)) \right) \Big|_{t=0} = 0.
\end{align*}
\end{itemize}
The above conditions should be understood as follows: upon recalling \eqref{(P)}, \eqref{eq:gi}, and \eqref{varphi*}, let $L^{*}_{i}$, $i\in\{1,\dots,q\}$, be the differential operator such that
\begin{align*}
L^{*}_{i}f_{i}& = \partial_{t}f_{i}=-\frac{1}{|\pa_x f_i|^4}\pa_{x}^4 f_i+h(f_i)\\
&=- \nabla_s^2 \vec{\kappa_i}-\frac12 |\vec{\kappa}_i|^2 \vec{\kappa}_i + \lambda_{i} \vec{\kappa}_i + \varphi_{i}^{*} \partial_{s} f_{i}
\end{align*} 
and let $$L^{* (i_{q})}_{i} f_{i} = \partial_{t}^{i_{q}} f_{i},$$ where one can use \cite[Lemmas~3.1, 3.5, 3.6]{DLPnetwork1} to derive an explicit expression for $\partial_{t}^{i_{q}} f_{i}$  free of time derivatives. Then the first condition can be rephrased as
\begin{align*}
L^{* (i_{q})}_{i} f_{0,i}&=0 \text{ at } x=1 \text{ for } i\in \{1,\dots,q\}, \text{ and }\\
L^{* (i_{q})}_{i} f_{0,i}&=L^{* (i_{q})}_{j} f_{0,j } \text{ at } x=0 \text{ for } i\neq j.
\end{align*}
The other conditions are understood in a similar way.
For instance the second set of conditions can be rephrased as
\begin{align*}
\pa_{x}^{2} L^{* (i_{q})}_{i} f_{0,i}&=0 \text{ at } x \in \{0,1 \} \text{ for } i\in \{1,\dots,q\}.
\end{align*}
\end{rem}

\begin{teo}\label{teo:STEpdesmooth}
Let $n \geq 2$, $q\geq 3$, 
$\alpha\in(0,1)$, $k\in \N$, $k\geq 4$ and $P_i$, $i\in \{1,\dots,q\}$, be  points in $\R^n$.  
Given $f_{0,i}:[0,1] \to \R^n$, $f_{0,i} \in C^{k,\alpha}([0,1])$, $i\in\{1,\dots,q\}$, regular maps 
satisfying the compatibility conditions of order $(k-4)$ 
(as stated in Remark~\ref{rem2.1}) and the non-collinearity condition (NC), then there exist $T>0$ and 
regular curves $f_{i} \in C^{\frac{k+\alpha}{4},k+\alpha}([0,T]\times [0,1];\R^n)$, $i\in\{1,\dots,q\}$, 
 such that $f=(f_1,\dots,f_q)$ is the unique solution of \eqref{(P)} together with the boundary conditions \eqref{eq:nonlinearbc} and the initial condition $f_i (t=0)=f_{0,i}$.

Moreover, we have instant parabolic smoothing, that is  $f_i \in C^{\infty}((0,T]\times [0,1])$ for any $i\in\{1,\dots,q\}$ and the non-collinearity condition (NC) holds at the triple junction for any time $t \in [0,T]$.
\end{teo}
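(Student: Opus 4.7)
The plan is to repeat the Banach fixed point scheme used for Theorem \ref{teo:STEpde}, but in the higher regularity Hölder class $C^{\frac{k+\alpha}{4},k+\alpha}$. Concretely, I would replace the set $X_i$ in \eqref{eq:X_i} by
\[
X_i^{k}=\Big\{u\in C^{\frac{k+\alpha}{4}, k+\alpha}([0,T]\times[0,1];\R^n):\ \|u\|_{C^{\frac{k+\alpha}{4},k+\alpha}}\le M,\ u(0,\cdot)=f_{0,i}\Big\},
\]
for suitable $M,T$ to be chosen later, associate to each $\bar f\in\prod_i X_i^k$ the linear parabolic problem \eqref{eq:systemlinear}--\eqref{eq:linbc} (with coefficients frozen at $f_{0,i}$ and right-hand side constructed exactly as in Section \ref{sec:ap}), and define the solution map $\mathcal{R}$ on this larger space. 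Since the coefficients $D_i$, $E_i$, $d_i$ are built from $f_{0,i}\in C^{k,\alpha}$ and hence enjoy the regularity required in \cite[Thm.~4.9]{Sol} at level $k+\alpha$, and since the parabolicity and complementary conditions of Section~\ref{secLP} depend only on the principal part and the (NC) condition, the linear theory goes through verbatim to give existence and the analogue of \eqref{eq:estSol1} with $C^{\frac{4+\alpha}{4},4+\alpha}$ and $C^{\frac{\alpha}{4},\alpha}$ replaced by $C^{\frac{k+\alpha}{4},k+\alpha}$ and $C^{\frac{k-4+\alpha}{4},k-4+\alpha}$ respectively.

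Next I would verify the self-map and contraction properties exactly as in Section~\ref{sec:PT}. The nonlinear terms $R_i$ and $h(\bar f_i)$ in \eqref{eq:R_i} and \eqref{eq:gi}, and the boundary vector $b$ in \eqref{eq:vectorb}, are polynomial expressions in $\partial_x^j\bar f_i$ for $j\le 4$ and in $|\partial_x\bar f_i|^{-1}$. Lemma \ref{lem:hilfsatz} generalises to the $C^{\frac{k-4+\alpha}{4},k-4+\alpha}$ norm by exactly the same Faà di Bruno/product estimates (Lemma \ref{lem:HolderChaos}, Lemma \ref{lem:Holder}) once one observes that $|\partial_x\bar f_i|\ge\delta/2$ is preserved. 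The crucial gain of a factor $T^{\alpha/4}$ in the difference estimates still comes from Lemma \ref{lem:propHolder} applied to $\bar f_i-f_{0,i}$ and $\bar f_i-\bar g_i$, both of which vanish at $t=0$; the only change is that now the factor involves $\|u\|_{C^{\frac{k+\alpha}{4},k+\alpha}}$ instead of $\|u\|_{C^{\frac{4+\alpha}{4},4+\alpha}}$, which is absorbed in the constant. Fixing first $M$ (from the last line of the analogue of \eqref{eq:estSol3}) and then $T$ so small that the contraction constant is $<1$ and the self-map bound is $\le M$ yields a unique fixed point $f\in\prod_i X_i^k$.

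The part that needs real care, and which I expect to be the main obstacle, is the verification that the hypothesis of compatibility conditions \emph{of order $(k-4)$} on $f_0$ for the nonlinear problem (as formulated in Remark \ref{rem2.1}) implies the compatibility conditions \emph{of order $(k-4)$} for the linear problem \eqref{eq:systemlinear}, \eqref{eq:linbc} that Solonnikov's theorem requires at level $k+\alpha$. For order zero this was already observed in Section \ref{sec:existlin}: at $t=0$ the coefficients and the right-hand side of the linear problem coincide with the full nonlinear expressions (since $\bar E_i|_{t=0}=E_i$, $R_i|_{t=0}=0$, and $b|_{t=0}$ reduces to the $\lambda_i$-term). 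For higher orders one has to differentiate in $t$ and, at $t=0$, replace every $\partial_t\bar f_i$ by $L_i^\ast\bar f_i$ using the linear equation; by induction on $i_q$ this yields purely spatial expressions in $f_{0,i}$ that, after a calculation, coincide with $\partial_t^{i_q}$ of the nonlinear problem applied to $f_{0,i}$. Hence Remark \ref{rem2.1} directly supplies what \cite[Thm.~4.9]{Sol} asks for.

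Finally, uniqueness follows from the uniqueness already established in Theorem \ref{teo:STEpde} (any $C^{\frac{k+\alpha}{4},k+\alpha}$ solution is in particular a $C^{\frac{4+\alpha}{4},4+\alpha}$ solution), the preservation of (NC) on a possibly smaller time interval is obtained by continuity of $\mathrm{nc}(t,0)$ exactly as in the proof of Theorem \ref{teo:STEpde}, and the parabolic smoothing $f_i\in C^\infty((0,T]\times[0,1])$ is proved by the same cutoff-in-time and bootstrapping argument à la \cite[App.~B.2.3]{DS} already sketched in Section \ref{sec:PT}, now using the improved starting regularity to reduce the number of bootstrap steps needed.
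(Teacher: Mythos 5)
Your overall strategy --- redoing the Banach fixed point argument directly in $C^{\frac{k+\alpha}{4},k+\alpha}$ --- is genuinely different from the paper's proof, and it contains a gap precisely at the point you flag as ``the main obstacle''. The data of the linearized problem \eqref{eq:systemlinear}, \eqref{eq:linbc} depend on the \emph{arbitrary} element $\bar f\in\prod_i X_i^k$, and the set $X_i^k$ only pins down $\bar f_i(0,\cdot)=f_{0,i}$, not the time derivatives $\partial_t^j\bar f_i|_{t=0}$ for $j\ge 1$. The higher-order compatibility conditions that \cite[Thm.~4.9]{Sol} requires at level $k+\alpha$ involve time derivatives at $t=0$ of the right-hand side $R_i(\bar f_i)+h(\bar f_i)$ and of the boundary datum $b(\bar f)$; already for $k\ge 7$ these contain terms such as $\partial_t\bar E_i|_{t=0}$ and $\partial_t\big(\partial_x\bar f_i/|\partial_x\bar f_i|\big)|_{t=0}$, which are not determined by $f_{0,i}$ and hence cannot be matched against the nonlinear compatibility conditions of Remark~\ref{rem2.1}. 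Your proposed remedy --- ``replace every $\partial_t\bar f_i$ by $L_i^\ast\bar f_i$ using the linear equation'' --- does not apply, because $\bar f_i$ is a free element of the ball and solves no equation; only the image $f=\mathcal{R}\bar f$ does. To save the scheme you would have to add to the definition of $X_i^k$ the constraints $\partial_t^j\bar f_i|_{t=0}=u_{i,j}$ for $1\le j\le \lfloor k/4\rfloor$, with $u_{i,j}$ the functions obtained by formally differentiating the nonlinear flow at $t=0$, verify that $\mathcal{R}$ preserves these constraints, and check that the resulting conditions on $f_0$ are exactly those of Remark~\ref{rem2.1}. None of this is automatic, and it is the substantive content missing from your argument.

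The paper avoids the issue entirely by a bootstrap: it first invokes Theorem~\ref{teo:STEpde} to obtain the solution $f\in C^{\frac{4+\alpha}{4},4+\alpha}$, and then regards the nonlinear system satisfied by $f$ as a \emph{linear} parabolic system whose coefficients $a_i=|\partial_x f_i|^{-4}$, $c_i$, and inhomogeneities are built from the already-known solution and therefore lie in $C^{\frac{3+\alpha}{4},3+\alpha}$ resp. $C^{\frac{1+\alpha}{4},1+\alpha}$. Since $f$ genuinely solves the equation, its time derivatives at $t=0$ are determined by $f_0$, the compatibility conditions of Remark~\ref{rem2.1} translate directly into those of the linear problem, and one application of \cite[Thm.~4.9]{Sol} yields $f\in C^{\frac{5+\alpha}{4},5+\alpha}$; iterating gains one order of regularity per step until the smoothness of $f_0$ and the order of the compatibility conditions are exhausted. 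This also makes it transparent that the existence time does not shrink as $k$ grows (used in Corollary~\ref{teo:STEpdeCinfty}), something your approach would need to argue separately since $M$ and $T$ would a priori depend on $\|f_0\|_{C^{k,\alpha}}$. I recommend you adopt the bootstrap structure, or else supply in full the jet-constrained fixed-point space and the verification that the constraints are preserved by $\mathcal{R}$.
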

\begin{proof}
Since the assumption of Theorem \ref{teo:STEpde} are satisfied, there exist $T>0$ and regular curves $f_{i} \in C^{\frac{4+\alpha}{4},4+\alpha}([0,T]\times I;\R^n) \cap  C^{\infty}((0,T]\times [0,1])$, $i\in\{1,\dots,q\}$, 
 such that $f=(f_1,\dots,f_q)$ is the unique solution of \eqref{(P)} satisfying the boundary conditions \eqref{eq:nonlinearbc} and the initial condition. Moreover the solution satisfies the non-collinearity condition on $[0,T]$. It remains to show that, in case $k\geq 5$, the solution is actually more regular. We observe that $f_i$ for $i\in\{1,\dots,q\}$ solve the \textit{linear} PDE system
$$\pa_t f_i = - a_i \pa_x^4 f_i +b_i \mbox{ in }(0,T]\times (0,1), \qquad i\in\{1,\dots,q\},$$
with boundary conditions
\begin{equation*}
\left\{\begin{array}{ll}
f_i(t,x=1) =P_i , \qquad f_i(t,x=0)=f_j(t,x=0)  \mbox{ }&\forall \, t \in [0,T], \ i,j\in\{1,\dots,q\}\\
\pa_x^2 f_i=0 \mbox{ }&\forall \, t \in [0,T], \ x=0,1, \ i\in\{1,\dots,q\}\\
\sum_{i=1}^q c_i \pa_x^3 f_i = \sum_{i=1}^q  q_i   \mbox{ }&\forall \, t, \ x=0,\end{array} \right.
\end{equation*}
and initial condition $f_i(t=0) =f_{i,0}$ on $[0,1]$, $i\in\{1,\dots,q\}$, by looking at the non-linear initial boundary value problem satisfied by $f_i$ as a linear problem for $f_i$ with given coefficients (since we already have a solution: recall \eqref{(P)}, \eqref{eq:gi}, \eqref{jay}). The coefficients satisfy $a_i \in C^{\frac{3+\alpha}{4},3+\alpha}([0,T]\times [0,1])$, $b_i \in C^{\frac{1+\alpha}{4},1+\alpha}([0,T]\times [0,1])$ and $c_i \in C^{\frac{3+\alpha}{4},3+\alpha}([0,T]\times [0,1])$ and the boundary data satisfies $q_i \in C^{\frac{3+\alpha}{4},3+\alpha}([0,T]\times [0,1])$. The system is parabolic by the regularity of $f_i$, $i\in\{1,\dots,q\}$, and by the assumptions on the initial datum the compatibility conditions of order zero and one are satisfied. Proceeding similarly as in the previous section \ref{secLP} one shows that the complementary conditions are satisfied since the non-collinearity condition is satisfied on $[0,T]$. By the regularity of the initial datum and \cite[Thm.4.9, page 121]{Sol} we find $f_i \in  C^{\frac{5+\alpha}{4}, 5+\alpha}([0,T] \times [0,1])$, $i\in\{1,\dots,q\}$. Being the solution more regular, we can repeat the argument as long as the smoothness of the initial datum and the order of the compatibility condition allow.
\end{proof}

By the previous result we immediately infer an existence result in $C^{\infty}$.

\begin{cor}\label{teo:STEpdeCinfty}
Let $n \geq 2$, $q\geq 3$, 
and $P_i$, $i\in\{1,\dots,q\}$, be points in $\R^n$.  
Given $f_{0,i}:[0,1] \to \R^n$, $f_{0,i} \in C^{\infty}([0,1])$, $i\in\{1,\dots,q\}$, regular maps satisfying the compatibility conditions of any order (as stated in Remark~\ref{rem2.1}) and the non-collinearity condition (NC), then there exist $T>0$ and 
regular curves $f_{i} \in C^{\infty}([0,T]\times [0,1];\R^n)$, $i\in\{1,\dots,q\}$, 
 such that $f=(f_1,\dots,f_q)$ is the unique solution of \eqref{(P)} together with the boundary conditions \eqref{eq:nonlinearbc} and the initial condition $f_i (t=0)=f_{0,i}$. Moreover, the non-collinearity condition (NC) holds at the triple junction for any time $t \in [0,T]$.
\end{cor}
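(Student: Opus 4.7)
The plan is to obtain Corollary \ref{teo:STEpdeCinfty} as a direct consequence of Theorem \ref{teo:STEpdesmooth} by iterating its conclusion in $k$. Fix any $\alpha\in(0,1)$. Since $f_{0,i}\in C^{\infty}([0,1])\subset C^{k,\alpha}([0,1])$ for every $k\in\N$ with $k\geq 4$ and since by hypothesis the compatibility conditions of every order (in the sense of Remark \ref{rem2.1}) are satisfied, in particular those of order $0$ hold. Together with the non-collinearity condition (NC), this matches the hypotheses of Theorem \ref{teo:STEpdesmooth} with $k=4$. Applying that theorem yields a time $T>0$ and a unique regular solution $f=(f_1,\ldots,f_q)$ with $f_i\in C^{\frac{4+\alpha}{4},4+\alpha}([0,T]\times[0,1])$ of \eqref{(P)}, \eqref{eq:nonlinearbc}, $f_i(0,\cdot)=f_{0,i}$, for which moreover (NC) persists on the whole of $[0,T]$.

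The next step is a bootstrap argument on this same interval $[0,T]$. Once the existence of a solution is in hand, the reasoning inside the proof of Theorem \ref{teo:STEpdesmooth} freezes the coefficients $a_i,b_i,c_i,q_i$ of the associated linear parabolic system using $f$ itself, and applies Solonnikov's linear theory \cite[Thm.4.9]{Sol} to raise the regularity by one order. Because all compatibility conditions are available, the higher-order compatibility conditions needed at each step are automatically satisfied, the complementary conditions continue to hold thanks to (NC) on $[0,T]$, and the coefficients gain regularity together with $f$. Thus from $f_i\in C^{\frac{k+\alpha}{4},k+\alpha}([0,T]\times[0,1])$ we pass to $f_i\in C^{\frac{k+1+\alpha}{4},k+1+\alpha}([0,T]\times[0,1])$, inductively producing $f_i\in C^{\frac{k+\alpha}{4},k+\alpha}([0,T]\times[0,1])$ for every $k\geq 4$ on the same time interval.

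Intersecting these classes gives $f_i\in C^{\infty}([0,T]\times[0,1])$ for each $i\in\{1,\ldots,q\}$. Uniqueness within the larger class $C^{\frac{4+\alpha}{4},4+\alpha}$ was established in Theorem \ref{teo:STEpde}, so uniqueness of the smooth solution follows \emph{a fortiori}. The persistence of (NC) on $[0,T]$ is inherited directly from the corresponding statement in Theorem \ref{teo:STEpdesmooth}.

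The only potentially delicate point is showing that the existence time $T$ does not shrink as the regularity is increased. This is \emph{not} a real obstacle here: $T$ was produced by the fixed-point argument of Theorem \ref{teo:STEpde} and depends solely on $n,q$, the constants $\delta,\tilde\delta$ from \eqref{eq:defdelta}--\eqref{eq:deltatilde}, the points $P_i$, the parameters $\lambda_i$, and the $C^{4,\alpha}$-norm of the initial datum, while each bootstrap step is run as a linear problem on the \emph{already fixed} interval $[0,T]$. Hence no successive shrinking occurs and the corollary follows.
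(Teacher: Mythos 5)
Your proposal is correct and follows essentially the same route as the paper: the paper's own (one-line) proof likewise observes that the bootstrap in Theorem \ref{teo:STEpdesmooth} is run as a linear problem on the interval $[0,T]$ already produced by the fixed-point argument, so the existence time is independent of $k$, and intersecting the classes $C^{\frac{k+\alpha}{4},k+\alpha}$ gives smoothness. Your explicit discussion of why $T$ does not shrink is exactly the point the paper relies on.
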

\begin{proof}
By the arguments in the proof of Theorem \ref{teo:STEpdesmooth} one sees that the time interval of existence of the solution is independent of $k$. This immediately yields the result.
\end{proof}

\section{Solution to the geometrical problem}\label{sec:equiv}

\begin{proof}[Proof of Theorem \ref{teo:STEgeo}]
Let $f_0=(f_{0,1},f_{0,2},\ldots, f_{0,q})$ be as in the statement  and set 
\begin{equation}\label{eq:repar}
\phi_{i}:[0,1] \to [0,1],  \quad \phi_{i}(x)=\frac{1}{\mathcal{L} (f_{0,i})} \int_{0}^{x} |\partial_{x} f_{0,i}| dx, 
\end{equation}
to be reparametrizations so that $\tilde{f}_{0,i}(x):= f_{0,i}(\phi^{-1}_{i}(x))$, $i=1,2,\ldots, q$, are now parametrized by constant speed. 
By Remark \ref{rem:reparid} we have $\tilde{f}_{0,i} \in C^{4,\alpha}([0,1],\R^n)$.   
Then \eqref{eq:nonlinearbcgeo2}, \eqref{eq:ccfirststep-zero}, \eqref{eq:ccfirststep} and Remark \ref{rem:Acs} imply that
\begin{align*}
&\pa_x^2 \tilde{f}_{0,i}=0 \mbox{ at }x=0,1 , \qquad 
\frac{1}{|\pa_x \tilde{f}_{0,i}|^4}\pa_x^4 \tilde{f}_{0,i}=0 \mbox{ at }x=1\\
& \mbox{and }\; 
\frac{1}{|\pa_x \tilde{f}_{0,i}|^4}\pa_x^4 \tilde{f}_{0,i} = \frac{1}{|\pa_x \tilde{f}_{0,j}|^4}\pa_x^4 \tilde{f}_{0,j} \mbox{ at }x=0 \mbox{ for }i \ne j
\end{align*}
and \eqref{eq:nonlinearbcid} hold. In other words the Compatibility Conditions \ref{compaorder1} are fulfilled
and $\tilde{f}_{0}$  is an admissible initial datum for the analytical problem.  
 Theorem~\ref{teo:STEpde}
yields the existence of $T>0$ and $\tilde{f}_{i} \in C^{\frac{4+\alpha}{4}, 4 +\alpha} ([0,T] \times [0,1]; \R^{n}) \cap C^{\infty}((0,T) \times [0,1])$ solutions to \eqref{(P)} (hence of \eqref{eq:flowgeomtang} with tangential components specified in \eqref{varphi*}) together with the initial condition $\tilde{f}_{0}$ and boundary conditions \eqref{eq:nonlinearbc}, that is we have found a solution for the geometric problem with initial datum $\tilde{f}_{0}=f_{0}\circ \phi^{-1}$, a reparametrization of $f_{0}$.
\end{proof}

\begin{proof}[Proof of Theorem \ref{teo:STEgeoCinfty}]
The statement is a direct consequence of Corollary~\ref{teo:STEpdeCinfty}.
\end{proof}


We now turn to the question of geometric uniqueness.
\begin{lemma}[Geometric uniqueness]\label{geomuniq}
Suppose that, given a smooth initial network $f_{0}$ satisfying the assumptions of  Theorem \ref{teo:STEgeoCinfty},  we have two smooth solutions $f=(f_{1},f_{2},\ldots, f_{q})$ and $\bar{f}=(\bar{f}_1,\bar{f}_2,\ldots, \bar{f}_q)$ (in the sense of Theorem~\ref{teo:STEgeoCinfty}) defined on 
$[0,T]\times [0,1]$ and on $[0,\bar{T} ]\times [0,1]$ respectively. Then the sets $\Gamma(t)=\{ (f_{1}(t,x), \ldots, f_{q}(t,x)) \,: \, x \in [0,1] \}$ and $\bar{\Gamma}(t)=\{ (\bar{f}_{1}(t,x), \ldots, \bar{f}_{q}(t,x)) \,: \, x \in [0,1] \}$ coincide for some small time.
\end{lemma}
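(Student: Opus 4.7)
The plan is to reduce the geometric uniqueness to the uniqueness part of Theorem~\ref{teo:STEpde} via reparametrization, in the spirit of a DeTurck trick. Given a smooth geometric solution, I would reparametrize it in time and space so that the reparametrized curves solve the analytical (non-degenerate) problem \eqref{(P)}, \eqref{eq:nonlinearbc} starting from a canonical initial datum; uniqueness from Theorem~\ref{teo:STEpde} then forces the two reparametrizations to coincide, so the underlying geometric images must coincide.

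Concretely, for the geometric solution $f=(f_{1},\ldots, f_{q})$ I would seek smooth orientation-preserving diffeomorphisms $\psi_{i}(t,\cdot):[0,1]\to[0,1]$ with $\psi_{i}(t,0)=0$, $\psi_{i}(t,1)=1$, and $\psi_{i}(0,\cdot)=\phi_{i}^{-1}\circ\sigma_{i}$, where $\sigma_{i}$ is the initial reparametrization from \eqref{eq:icgeo} and $\phi_{i}$ is the constant-speed map from \eqref{eq:repar}. Setting $\tilde{f}_{i}(t,x)=f_{i}(t,\psi_{i}(t,x))$, the normal part of the flow is inherited automatically since it is parametrization-invariant, while the tangential component becomes
\[
\tilde{\varphi}_{i}=\varphi_{i}(t,\psi_{i})+|\partial_{x}f_{i}|(t,\psi_{i})\,\partial_{t}\psi_{i}.
\]
Demanding $\tilde{\varphi}_{i}=\varphi_{i}^{*}(\tilde{f}_{i})$ with $\varphi_{i}^{*}$ from \eqref{varphi*} yields a quasilinear evolution equation for $\psi_{i}$, whose leading-order structure is parabolic thanks to the form of $\varphi_{i}^{*}$ and whose coefficients are smooth because $f$ is. Boundary compatibility at $x=1$ follows from $\partial_{t}f_{i}(t,1)=0$ and $\varphi_{i}^{*}(t,1)=0$, while at the junction $x=0$ the compatibility is exactly what Remark~\ref{rem:4.1} provides: under the non-collinearity condition (preserved along the flow by Theorem~\ref{teo:STEpdeCinfty}), the tangential components at the junction are determined purely geometrically by normal quantities, so both $\varphi_{i}(t,0)$ and $\varphi_{i}^{*}(\tilde{f}_{i})(t,0)$ are given by the same expression and $\partial_{t}\psi_{i}(t,0)=0$ is automatic. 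Standard quasilinear parabolic theory then produces a smooth $\psi_{i}$ on a small interval $[0,T']\times[0,1]$, and by construction $\tilde{f}$ solves the analytical problem with initial datum the constant-speed reparametrization of $f_{0}$.

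Performing the analogous construction for $\bar{f}$ yields $\tilde{\bar{f}}$ solving the same analytical initial-boundary value problem with the same initial datum. Theorem~\ref{teo:STEpde} forces $\tilde{f}=\tilde{\bar{f}}$ on a common interval $[0,T'']\times[0,1]$, and therefore
\[
\Gamma(t)=\bigcup_{i=1}^{q}\tilde{f}_{i}(t,[0,1])=\bigcup_{i=1}^{q}\tilde{\bar{f}}_{i}(t,[0,1])=\bar{\Gamma}(t)
\]
for all $t\in[0,T'']$. The main technical obstacle is the solvability and regularity of the reparametrization equation, in particular the compatibility of its boundary conditions at the junction; this is precisely the step where the non-collinearity condition is essential.
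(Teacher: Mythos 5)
Your overall strategy --- reparametrize each geometric solution into a solution of the analytical problem \eqref{(P)}, \eqref{eq:nonlinearbc} with the constant-speed initial datum, and then invoke the uniqueness part of Theorem~\ref{teo:STEpde} --- is the same as the paper's. The decisive step, constructing the reparametrizations, is however handled differently, and your version has a genuine gap. By demanding $\tilde{\varphi}_{i}=\varphi_{i}^{*}(\tilde{f}_{i})$, i.e.\ evaluating $\varphi^{*}$ at the \emph{unknown} reparametrized curve, you turn the equation for $\psi_{i}$ into a fourth-order quasilinear parabolic system (since $\varphi_{i}^{*}$ in \eqref{varphi*} contains $\partial_{x}^{4}$), coupled among the $q$ maps at the junction. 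Such a problem requires two boundary conditions per endpoint, whereas you only prescribe $\psi_{i}(t,0)=0$ and $\psi_{i}(t,1)=1$; in fact conditions like $\partial_{x}^{2}\psi_{i}=0$ at $x\in\{0,1\}$ are forced if $f$ is constant-speed parametrized and $\tilde{f}_{i}=f_{i}(\cdot,\psi_{i})$ is to satisfy $\partial_{x}^{2}\tilde{f}_{i}=0$ there, so the missing conditions are not a formality. One would also have to verify the Solonnikov complementary and compatibility conditions for this new coupled system; ``standard quasilinear parabolic theory'' does not apply off the shelf, and making this rigorous would essentially amount to redoing the analysis of Section~\ref{sec:PT}.

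The paper sidesteps all of this by first fixing $\tilde{f}$ to be the \emph{already constructed} analytical solution from Corollary~\ref{teo:STEpdeCinfty} with initial datum the constant-speed reparametrization of $f_{0}$. Its tangential velocity $\varphi_{i}^{*}(\tilde{f}_{i})(t,x)$ is then a known smooth function of $(t,x)$, and the reparametrization $\phi_{i}$ linking the given geometric solution $f$ to $\tilde{f}$ only has to solve the scalar first-order ODE \eqref{eq:ODEdiff} in $t$, with $x$ entering as a parameter --- no parabolic theory is needed. The non-collinearity condition enters exactly where you say it should, via Remark~\ref{rem:4.1}: it guarantees that the tangential velocities of the two parametrizations agree at $x\in\{0,1\}$, so the ODE preserves $\phi_{i}(t,0)=0$ and $\phi_{i}(t,1)=1$, and smoothness together with $\partial_{x}\phi_{i}(0,\cdot)\equiv 1$ keeps $\phi_{i}(t,\cdot)$ a diffeomorphism for a short time. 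I recommend you either restructure your argument along these lines, or else fully pose (with all boundary conditions) and solve the boundary value problem for $\psi_{i}$ before invoking it.
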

\begin{proof}
Let $\tilde{f}_0$ be the initial datum reparametrized by constant speed and $\tilde{f}$, defined on $[0,\tilde{T} ]\times I$, be the analytical solution given by Corollary~\ref{teo:STEpdeCinfty} with initial datum $\tilde{f}_0$. This is also a solution for the geometric problem (by the proof of Theorem \ref{teo:STEgeoCinfty}). For $f$ as in the statement, it is enough to show that the sets $f(t)$ and $\tilde{f}(t)$ coincide  on a subset $[0,T_{e}] \subset [0, \min\{T, \tilde{T}\}]$.
Since the set $f(t)$ is invariant under reparametrization of the maps describing it, and since every map considered is smooth, without loss of generality we can assume that $f$ is parametrized by constant speed (cf. \eqref{eq:repar} for a similar argument).  Thus each $f_i$, $i \in \{1, ..,q\}$, solves \eqref{eq:flowgeomtang} with some smooth tangential component $\varphi_i$, together with the boundary conditions \eqref{eq:nonlinearbcgeo} and initial datum $\tilde{f}_{0,i}$. Note that due to the constant speed parametrization the boundary conditions $\vec{\kappa}_{i}=0$ are equivalent to $\partial_{x}^{2} f_{i} =0$ for any $i \in \{1, \dots, q\}$.
The solution $\tilde{f}_i$, on the other hand, solves \eqref{eq:flowgeomtang} with tangential components $\varphi_{i}^{*}$ as in \eqref{varphi*}, boundary conditions \eqref{eq:nonlinearbc} and  initial datum $\tilde{f}_0$.
The proof is then complete if we show the existence of smooth diffeomorphisms $\phi_{i}$, $i=1, \ldots, q$ such that $\tilde{f}_{i}(t,x)= f(t, \phi_{i}(t,x))$ for $t \in [0,T_{e}]$ for some $0<T_{e} \leq \min\{T, \tilde{T}\}$.
Suppose first that such diffeomeorphisms exist. Then using the flow equations we compute
\begin{align*}
\partial_{t} \tilde{f}_{i}(t,x) &= \partial_{t} f_{i} (t, \phi_{i}(t,x)) + \pa_{x}f_{i} (t, \phi_{i}(t,x)) \partial_{t} \phi_{i} (t,x) \\
&=\left[- \nabla_s^2 \vec{\kappa}_{i}-\frac12 |\vec{\kappa}_{i}|^2 \vec{\kappa}_{i} + \lambda_{i} \vec{\kappa}_{i} + \varphi_{i} \partial_{s} f_{i} \right] (t, \phi_{i}(t,x)) + \pa_{x}f_{i} (t, \phi_{i}(t,x)) \partial_{t} \phi_{i} (t,x)\\
 &= \left[- \nabla_s^2 \vec{\tilde{\kappa}}_{i}-\frac12 |\vec{\tilde{\kappa}}_{i}|^2 \vec{\tilde{\kappa}}_{i} + \lambda_{i} \vec{\tilde{\kappa}}_{i} +\varphi_i^{*} \pa_s \tilde{f}_i \right] (t,x)
\\
& \quad + \big[\varphi_{i}(t, \phi_{i}(t,x)) -\varphi_i^{*}(t,x)  + | \pa_{x}f_{i} (t, \phi_{i}(t,x))| \partial_{t} \phi_{i} (t,x) \big]\partial_{s} f_{i} (t, \phi_{i}(t,x)) \, .
\end{align*}
It follows that each diffeomorphism $\phi_i$, $i\in \{1,..,q\}$, has to solve the first order ODE
\begin{equation}\label{eq:ODEdiff}
\partial_{t} \phi_{i} (t,x) = \frac{1}{| \pa_{x}f_{i} (t, \phi_{i}(t,x))|} \left( -\varphi_{i}(t, \phi_{i}(t,x))+\varphi_i^{*}(t,x)\right)\, , 
\end{equation}
with initial datum 
$\phi_i(0,x)=x$ for each $x\in [0,1]$. 
The right hand side in \eqref{eq:ODEdiff} can be written as $G(t, x,\phi_{i}(t,x))$, with $G(t,x,y)$ a smooth functions in its variables. Here $x$ plays the role of a parameter, $x\in [0,1]$. 
Since $\varphi_{i}(t,x)=\varphi_i^{*}(t,x)$ for $x \in \{0,1\}$ and all $t$ by Remark~\ref{rem:4.1}, 
we see from \eqref{eq:ODEdiff} that  $\phi_{i}(t,x)=x$, for all $t$ and $x\in\{0,1\}$. 
The existence, uniqueness and regularity of the solution follow from \cite[Sec.1.3]{Mantegazza} and \cite[Chap.9 and App.D]{Lee}. 
The smoothness of the solution together with the assumption on the initial datum $\pa_x \phi_i=1$ imply that also $\pa_{x}\phi_{i} >0$ on $[0,1]$ is satisfied for some small time. By existence of these diffeomorphisms, the claim follows.
\end{proof}

\renewcommand{\thesection}{}

\appendix\renewcommand{\thesection}{\Alph{section}}
\setcounter{equation}{0}
\renewcommand{\theequation}{\Alph{section}\arabic{equation}}

\section{Supporting materials}\label{secA}

Here we collect some useful formulas.
For $f: I \to \R^{n}$ a regular parametrization of a curve and for sufficiently smooth $\phi:I \to \R^n$ the first variation of the length is given by 
\begin{equation}
\label{dlength}
\frac{d}{d\varepsilon} \mathcal{L}(f+\varepsilon \phi) \Big|_{\varepsilon=0}= \frac{d}{d\varepsilon}  \int_I   |\partial_x (f+\varepsilon \phi)| dx  \Big|_{\varepsilon=0}=   \langle \partial_s f , \phi \rangle \Big|_{\partial I} - \int_{I} \langle \vec{\kappa} ,  \phi \rangle \, ds \, ,
\end{equation} 
while the first variation of elastic energy (see \cite[Proof of Lemma A1]{DP}) is
\begin{align}\nonumber
\frac{d}{d\varepsilon} \mathcal{E}(f+\varepsilon \phi) \Big|_{\varepsilon=0} & = \frac{d}{d\varepsilon}  \int_I  |\vec{\kappa}_{f+\varepsilon \phi}|^2  |\partial_x (f+\varepsilon \phi)| dx  \Big|_{\varepsilon=0} \\
& \hspace{-1cm}=   \langle \partial_s \phi ,\vec{\kappa} \rangle \Big|_{\partial I}  -  \langle  \phi ,\nabla_s \vec{\kappa} +\frac12  |\vec{\kappa}|^2 \partial_s f \rangle \Big|_{\partial I} + \int_{I} \langle \nabla_s^2 \vec{\kappa} +\frac12 |\vec{\kappa}|^2 \vec{\kappa} , \phi \rangle \, ds  \, .
\label{delastic}
\end{align}
Moreover, 
\allowdisplaybreaks{\begin{align}\nonumber
\vec{\kappa}= \partial_{s}^2 f &=\frac{\pa_x^2 f}{|\pa_x f|^{2}} - \langle \pa_x^2 f, \pa_x f \rangle \frac{\pa_x f}{|\pa_x f|^{4}}=\frac{\pa_x^2 f}{|\pa_x f|^{2}} - \langle \frac{\pa_x^2 f}{|\pa_x f|^{2}}, \partial_{s}  f\rangle \partial_{s} f,\\
\nonumber
|\vec{\kappa}|^{2} &=\frac{|\pa_x^2 f|^{2}}{|\pa_x f|^{4}} - \frac{(\langle \pa_x^2 f, \pa_x f \rangle)^{2}}{|\pa_x f|^{6}}, \\ \nonumber
\partial_{s}\vec{\kappa}= \partial_{s}^3 f &= \frac{\pa_x^3 f}{|\pa_x f|^{3}} - \langle \frac{\pa_x^3 f}{|\pa_x f|^{5}}, \pa_x f \rangle \pa_x f \\ \nonumber
& \quad -3 \frac{\pa_x^2 f}{|\pa_x f|^{5}} \langle \pa_x^2 f, \pa_x f \rangle + 4 \frac{(\langle \pa_x^2 f, \pa_x f \rangle)^{2}}{|\pa_x f|^{7}} \pa_x f - \frac{|\pa_x^2 f|^{2}}{|\pa_x f|^{5}} \pa_x f\\ \nonumber
 &= \frac{\pa_x^3 f}{|\pa_x f|^{3}} - \langle \frac{\pa_x^3 f}{|\pa_x f|^{3}}, \pa_s f \rangle \pa_s f \\ \label{eq:Aderkappa}
& \quad -3 \vec{\kappa}  \langle \frac{\pa_x^2 f}{|\pa_x f|^{2}},\pa_s f \rangle 
+  \frac{(\langle \pa_x^2 f, \pa_x f \rangle)^{2}}{|\pa_x f|^{6}} \pa_s f - \frac{|\pa_x^2 f|^{2}}{|\pa_x f|^{4}} \pa_s f,\\ \nonumber
\langle \vec{\kappa}, \partial_{s} \vec{\kappa} \rangle &= \frac{\langle \pa_x^3 f, \pa_x^2 f \rangle}{|\pa_x f|^{5}}
- \frac{\langle \pa_x^3 f, \pa_x f \rangle}{|\pa_x f|^{7}}\langle \pa_x f, \pa_x^2 f \rangle - 3 \frac{|\pa_x^2 f|^{2}}{|\pa_x f|^{7}} \langle \pa_x f, \pa_x^2 f \rangle  + 3 \frac{ (\langle \pa_x f, \pa_x^2 f \rangle )^{3} }{|\pa_x f|^{9}}
,\\ \nonumber
\partial_{s}^2\vec{\kappa}= \pa_s^4 f &= \frac{\pa_x^4 f}{|\pa_x f|^{4}} 
- 6 \langle \pa_x^2 f, \pa_x f \rangle \frac{\pa_x^3 f}{|\pa_x f|^{6}} \\ \nonumber
& \quad - 4 \frac{|\pa_x^2 f|^{2}}{|\pa_x f|^{6}} \pa_x^2 f 
- 4 \frac{\pa_x^2 f}{|\pa_x f|^{6}}\langle \pa_x^3 f, \pa_x f \rangle 
+ 19 \pa_x^2 f \frac{(\langle \pa_x^2 f, \pa_x f \rangle)^{2}}{|\pa_x f|^{8}}\\ \nonumber
& \quad + \frac{\pa_x f}{|\pa_x f|} \Big{[}  - \langle  \frac{\pa_x^4 f}{|\pa_x f|^{5}}, \pa_x f \rangle  
- 3 \langle  \frac{\pa_x^3 f}{|\pa_x f|^{5}}, \pa_x^2 f\rangle + 13  \langle\frac{\pa_x^3 f}{|\pa_x f|^{7}}, \pa_x f \rangle \langle \pa_x^2 f, \pa_x f \rangle \\ \nonumber
& \qquad + 13  \langle\frac{\pa_x^2 f}{|\pa_x f|^{7}}, \pa_x f \rangle | \pa_x^2 f|^{2} -28\frac{(\langle \pa_x^2 f, \pa_x f \rangle)^{3}}{|\pa_x f|^{9}} \Big{]}.
\end{align}}
In particular it follows for the  velocity in \eqref{eq:flowgeomtang}
\begin{align}\nonumber
\partial_{t}f  &= -\nabla_{s}^{2} \vec{\kappa} -\frac{1}{2} |\vec{\kappa}|^{2} \vec{\kappa} + \lambda \vec{\kappa} + \varphi \partial_{s} f \\ \nonumber
&= -\pa_s^2 \vec{\kappa} -3 \langle \partial_{s} \vec{\kappa}, \vec{\kappa} \rangle \partial_{s} f -\frac{3}{2}|\vec{\kappa}|^{2} \vec{\kappa} + \lambda \vec{\kappa} + \varphi \partial_{s} f\\ \nonumber
& = -\frac{\pa_x^4 f}{|\pa_x f|^{4}} + 6 \langle \pa_x^2 f, \pa_x f \rangle \frac{\pa_x^3 f}{|\pa_x f|^{6}} \\ \nonumber
& \quad +
\frac{\pa_x^2 f}{|\pa_x f|^{2}} \Big{(} \frac{5}{2} \frac{|\pa_x^2 f|^{2}}{|\pa_x f|^{4}}  
+ 4 \frac{\langle \pa_x^3 f, \pa_x f \rangle}{|\pa_x f|^{4}} 
- \frac{35}{2} \frac{(\langle \pa_x^2 f, \pa_x f \rangle)^{2}}{|\pa_x f|^{6}} +\lambda\Big{)}\\ \nonumber
& \quad -\frac{\pa_x f}{|\pa_x f|} \Big{[} -  \langle \frac{\pa_x^4 f}{|\pa_x f|^{5}}, \pa_x f \rangle + 10  \frac{\langle \pa_x^2 f, \pa_x f \rangle}{|\pa_x f|^{7}} \langle \pa_x^3 f, \pa_x f \rangle  +\frac{5}{2} \langle \pa_x^2 f, \pa_x f \rangle \frac{|\pa_x^2 f|^{2}}{|\pa_x f|^{7}} \\ \label{eq:Aflowexpl}
& \quad 
-\frac{35}{2}\frac{(\langle \pa_x^2 f, \pa_x f \rangle)^{3}}{|\pa_x f|^{9}} 
+ \lambda \frac{\langle \pa_x^2 f, \pa_x f \rangle}{|\pa_x f|^{3}}  -\varphi
\Big{]}.
 \end{align}

\begin{rem}\label{rem:Acs}
Let $f:[0,1] \to \R^n$ be a regular sufficiently smooth curve parametrized by constant speed (equal to its length), that is $|\pa_x f|\equiv L(f)$ on $[0,1]$. Then
\begin{align*}
& \langle \pa_x f, \pa_x^2 f\rangle =0, \; \qquad |\pa_x^2 f|^2+ \langle \pa_x f, \pa_x^3 f\rangle =0,\\
& \mbox{and } \qquad 3 \langle \pa_x^2 f, \pa_x^3 f\rangle +\langle \pa_x f, \pa_x^4 f\rangle =0 \, ,
\end{align*}
so that (with similar calculations as in \eqref{eq:Aflowexpl}) we immediately obtain
\begin{align*}
\nabla_s^2 \vec{\kappa} & = \frac{1}{|\pa_x f|^4} \pa_x^4 f - \frac{1}{|\pa_x f|^6} \langle \pa_x^4 f, \pa_x f\rangle \pa_x f
+\frac{1}{|\pa_x f|^6} |\pa_x^2 f|^2 \pa_x^2 f\\
& = \frac{1}{|\pa_x f|^4} \pa_x^4 f +\frac{3}{|\pa_x f|^6}  \langle \pa_x^2 f, \pa_x^3 f\rangle \pa_x f
+\frac{1}{|\pa_x f|^6} |\pa_x^2 f|^2 \pa_x^2 f.
\end{align*}
\end{rem}

\section{Function spaces}\label{sec:fs}

Our short-time existence theory uses parabolic H\"older spaces, which are defined as  follows.
Following \cite[page 66]{Sol}, for a function $v : [0,T] \times [0,1] \to \R$ and $\rho \in (0,1)$ let
\begin{align*}
[v]_{\rho,x}&:= \sup_{(t,x), (t,y) \in [0,T] \times [0,1]} \frac{|v(t,x)-v(t,y) |}{|x-y|^{\rho}},\\
[v]_{\rho,t}&:= \sup_{(t,x), (t',x) \in [0,T] \times [0,1]} \frac{|v(t,x)-v(t',x) |}{|t-t'|^{\rho}}.
\end{align*}
As in \cite[pages 91 and 66]{Sol} we define 
$$C^{ \frac{k+\alpha}{4},k+\alpha} ([0,T] \times [0,1]) \qquad \text{ for } \alpha \in (0,1) \text{ and } k \in \mathbb{N}_0$$
to be the space of all maps $v : [0,T] \times [0,1] \to \R$ with continuous derivatives $\partial_{t}^{i}\partial_{x}^{j}v$ for  $i, j \in \N \cup \{0 \}$ with $4i+j \leq k$ and such that the norm 
\begin{align*}
\|v \|_{ C^{ \frac{k+\alpha}{4}, k+\alpha}([0,T] \times [0,1]) } &:= \sum_{4i+j=0}^{k} \sup_{(t,x) \in [0,T] \times [0,1]} |\partial_{t}^{i}\partial_{x}^{j}v (t,x)| \\
& \quad+ \sum_{4i+j=k} [\partial_{t}^{i}\partial_{x}^{j}v ]_{\alpha,x} + \sum_{0<k+\alpha-4i-j <4}  [\partial_{t}^{i}\partial_{x}^{j}v ]_{\frac{k+\alpha -4i-j}{4},t}
\end{align*}
is finite. Notice that in the last term we sum over $i,j$'s satisfying the inequality. In the proofs, in order to avoid lengthy notation, we do not write the set when considering the parabolic H\"older spaces. That is we write simply $\|v \|_{C^{\frac{k+\alpha}{4}, k+\alpha}}$ instead of $\|v \|_{ C^{ \frac{k+\alpha}{4}, k+\alpha}([0,T] \times [0,1]) }$. When considering the H\"older norms in only one variable we always write the set, for instance in $C^{4,\alpha}([0,1])$ or $C^{0,\frac{\alpha}{4}}([0,T])$. 

\emph{When dealing with vector-valued maps we use the convention that the  $C^{\frac{k+\alpha}{4},k+\alpha }$-norm of the vector is the sum of the norms of its components. }


\begin{rem}\label{rem:Holder}
From the definition it follows that for $m\leq k$, $m,k \in \mathbb{N}_0$
$$  C^{ \frac{m+\alpha}{4},m+\alpha}([0,T]\times[0,1]) \subset  C^{ \frac{k+\alpha}{4}, k+\alpha}([0,T]\times[0,1]) \, ,$$
and if $v \in C^{ \frac{k+\alpha}{4}, k+\alpha}([0,T]\times[0,1])$, then $\pa_x^l v \in C^{ \frac{k-l+\alpha}{4}, k-l+\alpha}([0,T]\times[0,1])$ for all $0 \leq l\leq k$ so that
$$\|\pa_x^l v \|_{C^{ \frac{k-l+\alpha}{4}, k-l+\alpha}([0,T]\times[0,1])} \leq \| v\|_{C^{ \frac{k+\alpha}{4}, k+\alpha}([0,T]\times[0,1])} \, .$$
In particular at each fixed $x \in [0,1]$ we have $\pa_x^l v (\cdot,x )\in C^{s,\beta}([0,T])$ 
with $s =[\frac{k-l+\alpha}{4}]$ and $\beta = \frac{k-l+\alpha}{4}-s$.
\end{rem}

We will use often the following properties of the H\"older norms. 
\begin{lemma}\label{lem:Holder}
For $k \in \mathbb{N}_0$, $\alpha,\beta \in (0,1)$ and $T>0$ we have
\begin{enumerate}
\item if $v,w \in C^{\frac{k+\alpha}{4},k+\alpha}([0,T]\times [0,1])$, then
$$ \| v w \|_{C^{\frac{k+\alpha}{4},k+\alpha}} \leq C \| v \|_{C^{\frac{k+\alpha}{4},k+\alpha}} \| w \|_{C^{\frac{k+\alpha}{4},k+\alpha}} \, ,$$
with $C=C(k)>0$;
\item if $v\in C^{\frac{\alpha}{4},\alpha}([0,T]\times [0,1])$, $v(t,x)\ne 0$ for all $(t,x)$, then
$$ \Big\| \frac{1}{v} \Big\|_{C^{\frac{\alpha}{4},\alpha}} \leq \Big\| \frac{1}{v} \Big\|^2_{C^{0}([0,T]\times [0,1])} \| v \|_{C^{\frac{\alpha}{4},\alpha}} \, .$$
\end{enumerate}
Similar statements are true for functions in $C^{k,\beta}([0,T])$ and $C^{k,\beta}([0,1])$.
\end{lemma}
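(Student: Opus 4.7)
The plan is to treat the two assertions separately, starting with the second (which is purely algebraic) and then leveraging similar ideas for the first.

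For part (2), the proof reduces to the identity
$$\frac{1}{v(t,x)} - \frac{1}{v(t',y)} = \frac{v(t',y)-v(t,x)}{v(t,x)\,v(t',y)}.$$
Specializing first to $(t,x), (t,y)$ and then to $(t,x),(t',x)$, dividing by $|x-y|^\alpha$ resp.\ $|t-t'|^{\alpha/4}$ and taking suprema yields
$$[1/v]_{\alpha,x} \leq \|1/v\|_{C^0}^2 [v]_{\alpha,x}, \qquad [1/v]_{\alpha/4,t} \leq \|1/v\|_{C^0}^2 [v]_{\alpha/4,t}.$$
For the $C^0$ part, observe that $1 = v(t,x)\cdot v(t,x)^{-1}$ forces $\|1/v\|_{C^0} \leq \|1/v\|_{C^0}^2 \|v\|_{C^0}$. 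Summing these three contributions gives exactly the claimed bound.

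For part (1), I would proceed by applying the Leibniz rule
$$\partial_t^i \partial_x^j(vw) = \sum_{l=0}^{i}\sum_{m=0}^{j} \binom{i}{l}\binom{j}{m}\, \partial_t^l \partial_x^m v \cdot \partial_t^{i-l}\partial_x^{j-m} w$$
for every multi-index $(i,j)$ with $4i+j \leq k$. Each factor $\partial_t^a \partial_x^b v$ with $4a+b \leq k$ belongs to $C^0([0,T]\times[0,1])$ with norm bounded by $\|v\|_{C^{(k+\alpha)/4,k+\alpha}}$ (by Remark~\ref{rem:Holder}), so the pointwise (i.e.\ $C^0$) contribution to $\|vw\|_{C^{(k+\alpha)/4,k+\alpha}}$ is dominated by $C(k)\|v\|_{C^{(k+\alpha)/4,k+\alpha}}\|w\|_{C^{(k+\alpha)/4,k+\alpha}}$. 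For the Hölder seminorms appearing in the norm (i.e.\ $[\cdot]_{\alpha,x}$ on top spatial derivatives and $[\cdot]_{(k+\alpha-4i-j)/4,t}$ whenever $0<k+\alpha-4i-j<4$), I would use the standard splitting $ab - cd = (a-c)b + c(b-d)$ on each Leibniz summand: one of the two factors is controlled in $C^0$, while the other picks up a Hölder seminorm. Since every lower-order Hölder seminorm of $v$ (and $w$) of order less than what appears in the full norm is automatically dominated by $\|v\|_{C^{(k+\alpha)/4,k+\alpha}}$ (again by Remark~\ref{rem:Holder}), each term is bounded by a constant times $\|v\|_{C^{(k+\alpha)/4,k+\alpha}}\|w\|_{C^{(k+\alpha)/4,k+\alpha}}$.

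The combinatorics of enumerating all Leibniz summands and all seminorms in the parabolic Hölder norm is the main bookkeeping obstacle, but no term is genuinely delicate; everything reduces to the product rule and the triangle inequality. The analogous one-variable statements for $C^{k,\beta}([0,T])$ and $C^{k,\beta}([0,1])$ follow from the same arguments with simpler indexing (no mixed exponents in space and time). This completes the plan.
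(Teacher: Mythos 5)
Your proposal is correct and coincides with the paper's approach: the paper's own proof of this lemma is simply ``It follows by the definition of the norms and direct computation,'' and your Leibniz-rule bookkeeping for part (1) and the identity $\tfrac{1}{v}-\tfrac{1}{v'}=\tfrac{v'-v}{vv'}$ for part (2) are exactly that computation (part (2) even reproduces the stated constant $1$). The only point worth making explicit in the write-up is that for part (1) the intermediate time-H\"older seminorms $[\partial_t^l\partial_x^m v]_{\gamma,t}$ with $4l+m<4i+j$ are not literally summands of the norm and must be absorbed via the elementary comparison of H\"older seminorms of different exponents (splitting $|t-t'|\le 1$ and $|t-t'|>1$, or using $\sup|\partial_t^{l+1}\partial_x^m v|$ when the exponent exceeds $1$), which keeps $C=C(k)$ independent of $T$.
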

\begin{proof}
It follows by the definition of the norms and direct computation.
\end{proof}

\begin{lemma}\label{lem:HolderChaos}
For $n \in \mathbb{N}$, $k \in \mathbb{N}_0$, $\alpha,\beta \in (0,1)$ and $T>0$ we have
\begin{enumerate}
\item 
if a vector-field $v\in C^{\frac{\alpha}{4},\alpha}([0,T]\times [0,1];\R^n)$, then
$$ \|\, |v|\, \|_{C^{\frac{\alpha}{4},\alpha}} \leq C  \| v \|_{C^{\frac{\alpha}{4},\alpha}} \, ,$$
with $C=C(n)$.
\item for $v, w\in C^{\frac{\alpha}{4},\alpha}([0,T]\times [0,1];\R^n)$  we have
$$ \|\, |v|- |w|\, \|_{C^{\frac{\alpha}{4},\alpha}} \leq  C  \left \| \frac{1}{|v|+|w|} \right\|^{2}_{C^{0} ([0,T]\times [0,1])}
 (\|  v \|_{C^{\frac{\alpha}{4},\alpha}} + \|  w\|_{C^{\frac{\alpha}{4},\alpha}} )^{2} \| v-w \|_{C^{\frac{\alpha}{4},\alpha}} $$
\end{enumerate}
with $C=C(n)$.
Similar statements are true for functions in $C^{k,\beta}([0,T])$ and $C^{k,\beta}([0,1])$.
\end{lemma}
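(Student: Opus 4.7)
\textbf{Proof plan for Lemma \ref{lem:HolderChaos}.} For part (i), the plan is to exploit the convention that the $C^{\alpha/4,\alpha}$-norm of a vector-valued map is the sum of the norms of its scalar components. The sup-norm bound $\||v|\|_{C^0}\le C(n)\|v\|_{C^0}$ follows from $|v|\le \sum_{j=1}^n |v^j|$. For the two Hölder seminorms, the reverse triangle inequality gives, at any two spatial points $x,y\in[0,1]$ and any $t$,
\[
\bigl||v(t,x)|-|v(t,y)|\bigr|\le |v(t,x)-v(t,y)|\le \sum_{j=1}^n |v^j(t,x)-v^j(t,y)|,
\]
so dividing by $|x-y|^\alpha$ and taking the sup yields $[|v|]_{\alpha,x}\le \sum_j [v^j]_{\alpha,x}$. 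The same argument applied in the time variable gives $[|v|]_{\alpha/4,t}\le \sum_j [v^j]_{\alpha/4,t}$. Summing, the claim follows.

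For part (ii), the key algebraic identity I plan to use is
\[
|v|-|w|=\frac{|v|^2-|w|^2}{|v|+|w|}=\frac{\langle v-w,\,v+w\rangle}{|v|+|w|},
\]
which rewrites the difference as a product of three factors well-suited to Lemma \ref{lem:Holder}. Observe that away from the zero set of $|v|+|w|$, which is excluded by the assumption that $1/(|v|+|w|)\in C^0$, the denominator is uniformly positive, so the factor $1/(|v|+|w|)$ makes sense.

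I would then apply the Banach-algebra property of Lemma \ref{lem:Holder}(i) to the numerator:
\[
\|\langle v-w,v+w\rangle\|_{C^{\alpha/4,\alpha}}\le C\|v-w\|_{C^{\alpha/4,\alpha}}\bigl(\|v\|_{C^{\alpha/4,\alpha}}+\|w\|_{C^{\alpha/4,\alpha}}\bigr),
\]
with $C=C(n)$ absorbing the componentwise scalar product. For the denominator I invoke Lemma \ref{lem:Holder}(ii) with the choice $|v|+|w|$ in place of $v$, which gives
\[
\Bigl\|\tfrac{1}{|v|+|w|}\Bigr\|_{C^{\alpha/4,\alpha}}\le \Bigl\|\tfrac{1}{|v|+|w|}\Bigr\|_{C^0}^{2}\,\bigl\||v|+|w|\bigr\|_{C^{\alpha/4,\alpha}},
\]
and then bound $\||v|+|w|\|_{C^{\alpha/4,\alpha}}\le C(\|v\|_{C^{\alpha/4,\alpha}}+\|w\|_{C^{\alpha/4,\alpha}})$ by part (i). Multiplying the two estimates using Lemma \ref{lem:Holder}(i) once more produces exactly the stated right-hand side with the quadratic factor $(\|v\|_{C^{\alpha/4,\alpha}}+\|w\|_{C^{\alpha/4,\alpha}})^2$. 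The statements for $C^{k,\beta}([0,T])$ and $C^{k,\beta}([0,1])$ follow by the same arguments, since only the product and reciprocal inequalities of Lemma \ref{lem:Holder} are invoked. No real obstacle is expected; the only point that requires care is to write $|v|-|w|$ in the quotient form above rather than trying to estimate $|v|-|w|$ directly, since the map $z\mapsto |z|$ is not $C^1$ at the origin and a naive Hölder estimate would not produce the correct $(|v|+|w|)^{-2}$ prefactor.
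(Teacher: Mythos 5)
Your proposal is correct and follows essentially the same route as the paper: part (i) via the reverse triangle inequality together with the componentwise (sum-of-components) convention for vector norms, and part (ii) via the quotient identity $|v|-|w|=(|v|^2-|w|^2)/(|v|+|w|)$ combined with the product and reciprocal estimates of Lemma \ref{lem:Holder} (your $\langle v-w,v+w\rangle$ is exactly the paper's $\sum_j\bigl((v^j)^2-(w^j)^2\bigr)$). No gaps.
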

\begin{proof}
The main observation is that one has to be  careful about the treatment of the H\"older seminorms.
The first statement relies on the equivalence of the  $l_{2}$-norm and $l_{1}$-norm in $\R^{n}$, which gives
$$| |v(x)|- |v(y)|| \leq |v(x)-v(y)| \leq C(n) \sum_{j=1}^{n} |v^{j}(x) - v^{j}(y)| $$
for any vector valued map $v$.
The last inequality is needed because of our convention for the H\"older norm of a vector valued function.
For the second statement, the aim is to manipulate the
 considered map in such a way that it is written as a product of functions and  we can  apply the previous lemma. We can write
\begin{align*}
\| \,|v|-|w|  \, \|_{C^{\frac{\alpha}{4},\alpha}} &=  \left  \| \frac{|v|^{2} -|w|^{2}}{|v|+|w|}\right \|_{C^{\frac{\alpha}{4},\alpha}} \leq C \| |v|^{2}-|w|^{2}\|_{C^{\frac{\alpha}{4},\alpha}}
\left \| \frac{1}{|v|+|w|} \right \|_{C^{\frac{\alpha}{4},\alpha}}\\
& \leq C \left \| \frac{1}{|v|+|w|} \right\|^{2}_{C^{0}} \|   |v|+|w| \|_{C^{\frac{\alpha}{4},\alpha}}
\left \|  \sum_{j=1}^{n}( (v^{j})^{2} -(w^{j})^{2})  \right\|_{_{C^{\frac{\alpha}{4},\alpha}}}\\
& \leq C \sum_{j=1}^{n} \left \| \frac{1}{|v|+|w|} \right\|^{2}_{C^{0}} \|   |v|+|w| \|_{C^{\frac{\alpha}{4},\alpha}}
\left \| v^{j}+w^{j}  \right\|_{_{C^{\frac{\alpha}{4},\alpha}}}  \left \| v^{j}-w^{j}  \right\|_{_{C^{\frac{\alpha}{4},\alpha}}} 
\end{align*}
and the claim follows.
\end{proof}

We will also need that the composition of H\"older functions is again H\"older. Here one needs to pay attention since, in general, if $f \in C^{0,\alpha}(I), g\in C^{0,\beta}(I)$ then $f \circ g \in C^{0,\alpha \beta}(I)$, i.e. the H\"older exponent of the composition is given by the product of the H\"older exponents. Therefore, in order not to lose in regularity by applying directly this rule, we need to look carefully at the terms we are working with. In particular we exploit that we always consider the convolution of a H\"older map with a diffeomorphism and hence we do not lose in the H\"older power.
\begin{rem}\label{rem:reparid}
If $f_0\in C^{k,\alpha}([0,1])$, $k \geq 4$, then the diffeomorphism $\phi$ defined as in \eqref{eq:repar} is also in $C^{k,\alpha}([0,1])$ by the previous lemmata. Then $(\pa_x^i f_0)\circ \phi \in C^{0,\alpha}([0,1])$ for $0\leq i \leq k$ thanks to the fact that $\phi$ is a diffeomorphism and hence in particular in $C^{0,1}([0,1])$. Since
$\pa_x^k (f_0 \circ \phi)$ is a polynomial in the maps $(\pa_x^i f_0)\circ \phi$ and (several products of) $\pa_x^j \phi$, for $1 \leq i \leq k$, $1 \leq j\leq k$, we see that $ f_0 \circ \phi \in C^{k,\alpha}([0,1])$. 
\end{rem}

\begin{lemma}\label{lem:propHolder}
Let $T<1$ and $v \in C^{ \frac{4+\alpha}{4}, 4+\alpha}([0,T]\times[0,1])$ such that $v(0,x)= 0$, for any $x \in [0,1]$ then
$$ \| \pa_x^l v \|_{C^{\frac{m+\alpha}{4}, m+\alpha}} \leq C(m) T^{\beta} \|  v \|_{C^{\frac{4+\alpha}{4}, 4+\alpha}} $$
for all $l,m \in \mathbb{N}_0$ such that $l+m<4$. Here $\beta=\max \{ \frac{1-\alpha}{4}, \frac{\alpha}{4} \} \in (0,1)$; more precisely for $l\geq 1$ then $\beta=\frac{\alpha}{4}$.

In particular, for each $x \in [0,1]$ fixed
$$ \| \pa_x^l v (\cdot, x) \|_{C^{0,\frac{m+\alpha}{4}}([0,T])} \leq C(m) T^{\beta} \|  v \|_{C^{\frac{4+\alpha}{4}, 4+\alpha}} $$
for all $l,m \in \mathbb{N}_0$ such that $l+m<4$.
\end{lemma}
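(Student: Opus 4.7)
The key observation is that $v(0, x) = 0$ for all $x$ forces $\partial_x^k v(0, x) = 0$ for every $k \in \mathbb{N}_0$; this will let me extract a positive power of $T$ from each seminorm appearing in $\|\partial_x^l v\|_{C^{(m+\alpha)/4, m+\alpha}}$. Because $l + m < 4$, no mixed time-space derivatives enter that norm: I only need to control pure spatial derivatives $\partial_x^{j+l} v$ for $0 \le j \le m$, the top-order spatial H\"older seminorm $[\partial_x^{m+l} v]_{x, \alpha}$, and temporal H\"older seminorms $[\partial_x^{j+l} v]_{t, (m+\alpha-j)/4}$.

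For the sup norms, whenever $k = j+l \ge 1$ the seminorm $[\partial_x^k v]_{t, (4-k+\alpha)/4}$ is part of $\|v\|_{C^{(4+\alpha)/4, 4+\alpha}}$; together with $\partial_x^k v(0, x) = 0$ this gives $\sup|\partial_x^k v| \le T^{(4-k+\alpha)/4} \|v\|_{C^{(4+\alpha)/4, 4+\alpha}}$. The single exception is $k = 0$ (which only occurs when $l = 0$); there $v$ itself does not carry a $t$-H\"older seminorm of order $<1$ in the norm, so I will integrate directly, $|v(t, x)| = \big|\int_0^t \partial_s v\, ds\big| \le T \|\partial_t v\|_\infty$.

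For the top-order spatial H\"older seminorm $[\partial_x^{m+l} v]_{x, \alpha}$, I plan to combine the sup bound above with the fundamental-theorem estimate $|\partial_x^{m+l} v(t, x) - \partial_x^{m+l} v(t, y)| \le |x-y|\sup|\partial_x^{m+l+1} v|$ --- noting that $\partial_x^{m+l+1} v$ likewise vanishes at $t=0$, so its sup is controlled by $T^{(3-m-l+\alpha)/4}\|v\|$ --- and then apply the standard interpolation $\min(A, Br) \le A^{1-\alpha} B^\alpha r^\alpha$. A short calculation then shows that the $\alpha$-dependent terms in the exponent cancel exactly, leaving $[\partial_x^{m+l} v]_{x, \alpha} \le C T^{(4-m-l)/4}\|v\|$.

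For the temporal H\"older seminorms with $j+l \ge 1$, the source exponent $(4-j-l+\alpha)/4$ from the ambient norm exceeds the target exponent $(m+\alpha-j)/4$ by precisely $(4-l-m)/4 > 0$, and since $|t-t'| \le T$ this provides a $T^{(4-l-m)/4}$ factor for free. The remaining case $j = l = 0$ is handled by the direct estimate $|v(t, x) - v(t', x)| \le |t-t'|\|\partial_t v\|$, which delivers $T^{(4-m-\alpha)/4}$; in the worst case $m = 3$ this collapses to $(1-\alpha)/4$, and is the origin of this exponent in $\beta$. Assembling everything and using $T<1$ (so the smaller exponent dominates), $\beta = \alpha/4$ is admissible whenever $l \ge 1$ (the bad $j=l=0$ contribution is then absent and all exponents are at least $1/4$), while for $l = 0$ one has to contend with the slower $(1-\alpha)/4$ decay coming from the $m=3$ temporal seminorm of $v$ itself. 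The fixed-$x$ statement follows by restriction to a single slice. The main bookkeeping challenge will be making sure that the various interpolation exponents all land at or above the claimed $\beta$.
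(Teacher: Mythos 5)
Your proposal is correct and follows essentially the same strategy as the paper's proof: term-by-term estimation of the norm, using that $\partial_x^k v(0,\cdot)=0$ to extract a power of $T$ from each temporal seminorm in the ambient norm, with the same special handling of the $j=l=0$ sup bound via $\partial_t v$ and of the $m=3$, $l=j=0$ temporal seminorm as the source of the $(1-\alpha)/4$ exponent. The only (harmless) deviation is your interpolation $\min(A,Br)\le A^{1-\alpha}B^{\alpha}r^{\alpha}$ for the spatial H\"older seminorm, where the paper simply uses $|x-y|\le 1$ to reduce to $\sup|\partial_x^{m+l+1}v|$ and then extracts $T^{\alpha/4}$; both yield admissible exponents.
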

\begin{proof}
Since  by definition of the norm we have (recall $m,l <4$)
\begin{align}\label{eq:normapp1}
\|\partial_{x}^{l} v \|_{C^{ \frac{m+\alpha}{4}, m+\alpha}} =
\sum_{j=0}^{m} \sup_{[0,T] \times [0,1]} |\partial_{x}^{j+l}v (t,x)| +  [\partial_{x}^{m+l}v ]_{\alpha,x} + \sum_{j=0}^{m}  [\partial_{x}^{j} \partial_{x}^{l} v ]_{\frac{m+\alpha -j}{4},t}
\end{align}
 we observe that for $j \in \{ 0, \ldots, m \}$, $0<l+j \leq l+m  < 4$, and we have
\begin{align*}
\sup_{(t,x) \in [0,T] \times [0,1]} |\partial_{x}^{j+l}v (t,x)| &= \sup_{(t,x) \in [0,T] \times [0,1]} \frac{|\partial_{x}^{j+l}v (t,x) -\partial_{x}^{j+l}v (0,x) |}{|t-0|^{\frac{4+\alpha -(l+j)}{4}}}|t-0|^{\frac{4+\alpha -(l+j)}{4}}\\
&\leq [\partial_{x}^{j+l} v ]_{\frac{4+\alpha -(l+j)}{4},t} \, T^{\frac{4+\alpha -(l+j)}{4}} \leq [\partial_{x}^{j+l} v ]_{\frac{4+\alpha -(l+j)}{4},t} \, T^{\frac{\alpha }{4}}.
\end{align*}
Instead, for the case $j=l=0$ we compute
\begin{align*}
\sup_{(t,x) \in [0,T] \times [0,1]} |v (t,x)| = \sup_{(t,x) \in [0,T] \times [0,1]}  \frac{|v(t,x) -v(0,x)|}{|t-0|} |t| \leq 
T  \left( \sup_{ [0,T] \times [0,1]} |\partial_{t}v| \right) .
\end{align*}

Next, with similar ideas, using the fact that $|x-y| \leq 1$ and $ l+m +1\leq4$ we compute
\begin{align*}
 [\partial_{x}^{m+l}v ]_{\alpha,x} &=\sup_{(t,x), (t,y) \in [0,T] \times [0,1]}\frac{|\partial_{x}^{m+l}v(t,x)-\partial_{x}^{m+l}v(t,y) |}{|x-y|^{\alpha}} \\
 &=\sup_{(t,x), (t,y) \in [0,T] \times [0,1]}\frac{|\partial_{x}^{m+l}v(t,x)-\partial_{x}^{m+l}v(t,y) |}{|x-y|}|x-y|^{1-\alpha}\\
 & \leq \sup_{(t,x) \in [0,T] \times [0,1]} |\partial_{x}^{m+l+1}v(t,x)|\\
 & \leq [\partial_{x}^{m+l+1} v ]_{\frac{4+\alpha -(m+l+1)}{4},t} \, T^{\frac{4+\alpha -(m+l+1)}{4}} 
  \leq [\partial_{x}^{m+l+1} v ]_{\frac{4+\alpha -(m+l+1)}{4},t} \, T^{\frac{\alpha}{4}} 
\end{align*}
and for $ j\in \{0, \ldots, m \}$
\begin{align*}
 [\partial_{x}^{j} \partial_{x}^{l} v ]_{\frac{m+\alpha -j}{4},t}
&=  \sup_{(t,x), (t',x) \in [0,T] \times [0,1]}  \frac{ | \partial_{x}^{j+l} v (t,x) - \partial_{x}^{j+l} v (t',x)|}{|t-t'|^{\frac{m+\alpha +1 -j}{4}}} |t-t'|^{\frac{1}{4}}\\
& \leq [\partial_{x}^{l+j} v ]_{\frac{m+l+1+\alpha -(j+l)}{4},t}\, T^{\frac{1}{4}}.
\end{align*}
The above computation makes sense except for the case $m=3$, $j=0$ (and hence $l=0$), for which $\frac{m+\alpha+1 -j}{4} >1$.
The case $m=3$, $l=j=0$ is treated as follows 
\begin{align*}
[ v ]_{\frac{3+\alpha }{4},t} &= \sup_{(t,x), (t',x) \in [0,T] \times [0,1]}  \frac{|v(t)-v(t')|}{|t-t'|^{\frac{3+\alpha}{4} +\frac{1-\alpha}{4}}} |t-t'|^{\frac{1-\alpha}{4}} \\
& \leq  \left( \sup_{  [0,T] \times [0,1]} |\partial_{t}v| \right) T^{\frac{1-\alpha}{4}} .
\end{align*}
Finally putting all estimates together and recalling that $l+j \leq l+m \leq 3$, $T<1$ and $\alpha <1$ we obtain
(here for $m<3$, for $m=3$ the arguments are similar)
\begin{align*}
&\|\partial_{x}^{l} v \|_{C^{ \frac{m+\alpha}{4}, m+\alpha}}\\
 &\leq \Big (  \sum_{l\neq0 , j=0 }^{m} [\partial_{x}^{j+l} v ]_{\frac{4+\alpha -(l+j)}{4},t}+ [\partial_{x}^{m+l+1} v ]_{\frac{4+\alpha -(m+l+1)}{4},t} + \sum_{ j=0  }^{m}  [\partial_{x}^{l+j} v ]_{\frac{m+l+1+\alpha -(j+l)}{4},t}
\Big) T^{\frac{\alpha}{4}}\\
& \quad + C\left( \sup_{ [0,T] \times [0,1]} |\partial_{t}v| \right) T^{\frac{1-\alpha}{4}} .
\end{align*}
and  the first estimate follows. The second part of the claim follows from the observation that
\begin{equation}\label{eq:estHolbdy}
\| \pa_x^l v (\cdot, x) \|_{C^{0,\frac{m+\alpha}{4}}([0,T])} 
\leq \| \pa_x^l v \|_{C^{\frac{m+\alpha}{4}, m+\alpha}([0,T]\times[0,1])} \, , 
\end{equation}
due to \eqref{eq:normapp1}.
\end{proof}

\bibliography{ref2}

\begin{thebibliography}{10}

\bibitem{BGN12b}
{\sc Barrett, J.~W., Garcke, H., and N{\"u}rnberg, R.}
\newblock Elastic flow with junctions: variational approximation and
  applications to nonlinear splines.
\newblock {\em Math. Models Methods Appl. Sci. 22}, 11 (2012).

\bibitem{BC07}
{\sc Buxton, G.~A., and Clarke, N.}
\newblock ''{B}ending to stretching'' transition in disordered networks.
\newblock {\em Phys. Rev. Lett. 98\/} (2007), 238103.

\bibitem{DLPnetwork1}
{\sc Dall'Acqua, A., Lin, C.-C., and Pozzi, P.}
\newblock Elastic of networks: long-time existence result.
\newblock {\em To appear in Geometric Flows\/}.

\bibitem{DNP}
{\sc Dall'Acqua, A., Novaga, M., and Pluda, A.}
\newblock Minimal elastic networks.
\newblock {\em accepted in Indiana University Mathematics Journal\/} (2017).

\bibitem{DP}
{\sc Dall'Acqua, A., and Pozzi, P.}
\newblock A {W}illmore-{H}elfrich {$L^2$}-flow of curves with natural boundary
  conditions.
\newblock {\em Comm. Anal. Geom. 22}, 4 (2014), 617--669.

\bibitem{DS}
{\sc Dall'Acqua, A., and Spener, A.}
\newblock The elastic flow of curves in the hyperbolic plane.
\newblock {\em preprint, arXiv:1710.09600\/} (2017).

\bibitem{NPP}
{\sc Del~Nin, G., , Pluda, A., and Pozzetta, M.}
\newblock Degenerate elastic networks.
\newblock {\em Preprint 2019\/}.

\bibitem{DKS}
{\sc Dziuk, G., Kuwert, E., and Sch{\"a}tzle, R.}
\newblock Evolution of elastic curves in {$\mathbb{R}^n$}: existence and
  computation.
\newblock {\em SIAM J. Math. Anal. 33}, 5 (2002), 1228--1245 (electronic).

\bibitem{EZ}
{\sc Eidelman, S.~D., and Zhitarashu, N.~V.}
\newblock {\em Parabolic boundary value problems}, vol.~101 of {\em Operator
  Theory: Advances and Applications}.
\newblock Birkh\"auser Verlag, Basel, 1998.
\newblock Translated from the Russian original by Gennady Pasechnik and Andrei
  Iacob.

\bibitem{GMPLTE}
{\sc Garcke, H., Menzel, J., and Pluda, A.}
\newblock Long time existence of solutions to an elastic flow of networks.
\newblock {\em Preprint 2019 (https://arxiv.org/abs/1901.03246)\/}.

\bibitem{GMP}
{\sc Garcke, H., Menzel, J., and Pluda, A.}
\newblock Willmore flow of planar networks.
\newblock {\em Journal of Differential Equations
  (https://doi.org/10.1016/j.jde.2018.08.019)\/} (2018).

\bibitem{GD14}
{\sc Gurtner, G., and Durand, M.}
\newblock Stiffest elastic networks.
\newblock {\em Proc. R. Soc. A 470\/} (2014), 20130611.

\bibitem{LF}
{\sc Lee, E.~H., and Forsythe, G.~E.}
\newblock Variational study of nonlinear spline curves.
\newblock {\em SIAM Rev. 15\/} (1973), 120--133.

\bibitem{Lee}
{\sc Lee, J.~M.}
\newblock {\em Introduction to smooth manifolds}, second~ed., vol.~218 of {\em
  Graduate Texts in Mathematics}.
\newblock Springer, New York, 2013.

\bibitem{Mantegazza}
{\sc Mantegazza, C.}
\newblock {\em Lecture Notes on Mean Curvature Flow}, vol.~290 of {\em Progress
  in Mathematics}.
\newblock Birkh\"auser Basel, 2011.

\bibitem{NP19}
{\sc Novaga, M., and Pozzi, P.}
\newblock A second order gradient flow of $p$-elastic planar networks.
\newblock {\em To appear in SIAM Journal on Mathematical Analysis\/}.

\bibitem{Polden}
{\sc Polden, A.}
\newblock Curves and surfaces of least total curvature and fourth-order flows.
\newblock {\em PhD Thesis, Universit\"at T\"ubingen\/} (1996).

\bibitem{Sol}
{\sc Solonnikov, V.~A.}
\newblock {\em Boundary Value Problems of Mathematical Physics. III}.
\newblock No.~83 in Proceedings of the Steklov institute of Mathematics (1965).
  Amer. Math. Soc., Providence, R. I., 1967.

\bibitem{Truesdell}
{\sc Truesdell, C.}
\newblock The influence of elasticity on analysis: the classic heritage.
\newblock {\em Bull. Amer. Math. Soc. (N.S.) 9}, 3 (1983), 293--310.

\end{thebibliography}
\bibliographystyle{acm}

\end{document}